\newtheorem{thm}{Theorem}[section]
\newtheorem{conj}[thm]{Conjecture}
\newtheorem{prop}[thm]{Proposition}
\newtheorem{claim}[thm]{Claim}
\newtheorem{lemma}[thm]{Lemma}
\newtheorem{fact}{Fact}
\theoremstyle{definition}
\def\eps{\varepsilon}
\def\cT{\mathcal T}
\def\ex{\text{ex}}
\newcommand*{\rom}[1]{\expandafter{\romannumeral #1\relax}}
\numberwithin{equation}{section}
\begin{document}

	\title[$Y$-tilings and Hamilton $ \ell $-cycles]{Large $ Y_{k,b} $-tilings and  Hamilton $ \ell $-cycles in $k$-uniform hypergraphs }
	\author{Luyining Gan}
	\address{Department of Mathematics and Statistics, University of Nevada, Reno, Reno, NV, USA}
	\email{lgan@unr.edu}
	
	\author{Jie Han}
	\address{School of Mathematics and Statistics, Beijing Institute of Technology, Beijing, China}
	\email{\tt hanjie@bit.edu.cn}
	\author{Lin Sun}
	\address{School of Mathematics, Shandong University,
		Jinan, China.}
	\email{linsun@mail.sdu.edu.cn, ghwang@sdu.edu.cn}
	
	\author{Guanghui Wang}
	
	\begin{abstract}
		%
		
		Let $Y_{3,2}$ be the $3$-uniform hypergraph with two edges intersecting in two vertices.
		Our main result is that any $n$-vertex 3-uniform hypergraph with at least $\binom{n}{3} - \binom{n-m+1}{3} + o(n^3)$ edges contains a collection of $m$ vertex-disjoint copies of $Y_{3,2}$, for $m\le n/7$.
		The bound on the number of edges is asymptotically best possible. This problem generalizes the Matching Conjecture of Erd\H{o}s.
		We then use this result combined with the absorbing method to determine the asymptotically best possible minimum $(k-3)$-degree threshold for $\ell$-Hamiltonicity in $k$-graphs, where $k\ge 7$ is odd and $\ell=(k-1)/2$. 
		Moreover, we give related results on  $ Y_{k,b} $-tilings and  Hamilton $ \ell $-cycles with $ d $-degree for some other values of $ k,\ell,d $.
		
	\end{abstract}
	\maketitle
	\noindent {\bf Keywords:} Hypergraph; Hamilton cycle; Hypergraph regularity method; Absorbing method.
	
	\section{Introduction}
	Given $ k\ge2 $, a \emph{$ k $-uniform hypergraph} $H$ (in short, \emph{$ k $-graph}) is a pair $(V,E) $, where $V$ is a vertex set and $E$ is a family of $k$-element subsets of $V$. We denote the numbers of edges in $H$ by $ e(H) $.
	Given two $k$-graphs $F$ and $H$, an \emph{$ F $-tiling} in $ H $ is a subgraph of $H$ consisting of vertex-disjoint copies of $F$. 
	The number of copies of $F$ is called the \emph{size} of the $F$-tiling.
	When $F$ is a single edge, an $F$-tiling is known as a \emph{matching}. 
	The following conjecture was proposed by Erd\H os in 1960s.
	
	\begin{conj} \cite{MR260599}
		\label{conj}
		Let $ n,s,k $ be three positive integers such that $ k\ge2 $ and $ n \ge k(s+1)-1 $. If $H$ is a $ k $-graph on $ n $ vertices which does not have a matching of size $s+1$, then
		\[ e(H)\le \max \left \{ \binom{k(s+1)-1}{k}, \binom{n}{k}-\binom{n-s}{k} \right \}. \]

	\end{conj}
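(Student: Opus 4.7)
Since Conjecture \ref{conj} is the long-standing Erd\H{o}s Matching Conjecture, which remains open in full generality (known for $k=2$ by Erd\H{o}s--Gallai, for $k=3$ by Frankl, and for $n$ large compared to $ks$ by Erd\H{o}s and by Frankl), any realistic proof proposal must be measured against the two conjectured extremal families
\[
H_1 = \binom{[k(s+1)-1]}{k}, \qquad H_2 = \Bigl\{\, e \in \binom{[n]}{k} : e \cap [s] \ne \emptyset \,\Bigr\},
\]
whose edge counts give exactly the two terms of the bound. My plan is the classical shifting-plus-stability approach, with the modest goal of recovering (and possibly mildly extending) the range where the first-term/second-term transition is well understood.

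The first step is to iterate Frankl's shifts $S_{ij}$ for $i<j$; these preserve both $e(H)$ and the matching number $\nu(H)$, so we may assume $H$ is left-compressed. In a shifted $H$ with $\nu(H) \le s$, fix a maximum matching $M$ of size $t\le s$: every edge of $H$ meets $V(M)$, and the edges of $H$ are ``downward-closed'' in the coordinate order, which gives extra control. The next step is a trichotomy according to where the edges concentrate: if most edges lie in $[k(s+1)-1]$, the first term of the bound is immediate; if most edges contain a vertex of $[s]$, the second term is immediate via a link-degree count; otherwise we are in the mixed regime. For the mixed regime I would attempt a swap-and-augment argument: whenever an edge $e$ lives ``far out'' in $V \setminus [k(s+1)-1]$, one tries to combine $e$ with a shifted replacement of the piece of $M$ that $e$ meets to produce a matching of size $s+1$, using Kruskal--Katona-type shadow bounds on the links of the vertices in $[s]$ to control the number of non-star edges that can coexist with many star-like edges.

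The main obstacle, and the reason the conjecture remains open in general, is the transition region $n \approx (k+1)(s+1)$, where $\binom{k(s+1)-1}{k}$ and $\binom{n}{k}-\binom{n-s}{k}$ are comparable; in this window neither extremal family dominates, near-extremal configurations need not resemble either, and the swap arguments lose enough slack that the counting no longer closes. A genuine stability theorem in this mixed regime — something beyond what shifting and shadow counting directly provide — appears to be necessary. Realistically I expect the plan to reproduce Frankl-type bounds for $n \ge Cks$, but closing the gap to all $n \ge k(s+1)-1$ would require an additional ingredient (perhaps a probabilistic link-sampling argument or a sharper junta-type approximation of extremal $H$) that I do not see how to supply from the tools above.
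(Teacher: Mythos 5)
The statement you were asked to prove is Conjecture~\ref{conj}, the Erd\H{o}s Matching Conjecture, which the paper does \emph{not} prove: it is quoted from \cite{MR260599} purely as background and motivation, and it remains open in full generality. You correctly identify this, accurately summarize the known partial results (the $k=2$ case of Erd\H{o}s--Gallai, the $k=3$ case culminating in Frankl, the large-$n$ regime beginning with Erd\H{o}s and refined by Bollob\'as--Daykin--Erd\H{o}s, Huang--Loh--Sudakov, Frankl, and Frankl--Kupavskii, and Frankl's resolution near $n = (s+1)k$), and you are candid that the shifting-plus-shadow-counting plan you sketch is the standard machinery and cannot by itself close the transition window $n \approx (k+1)(s+1)$. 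Since neither you nor the paper supplies a proof, there is no discrepancy to flag; your write-up is an honest and correct assessment of where the difficulty lies rather than a flawed proof attempt. One small remark worth adding for precision: in the paper's own development, Conjecture~\ref{conj} is generalized to Conjecture~\ref{conj1} on $Y_{k,b}$-tilings, and what the authors actually prove are the partial results Theorem~\ref{ineg} (small $s$, all $k,b$, by induction via the Frankl--F\"uredi Tur\'an bound) and Theorem~\ref{edge th2} ($k=3$, $b=2$, up to $s \le n/7$, by a maximum-$\{Y,E\}$-tiling counting argument plus regularity) --- neither of which is the shifting approach you describe, and neither of which even specializes to the matching case $b=0$. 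So if you wanted to align your proposal with what the paper actually establishes, the relevant target would be Conjecture~\ref{conj1} for $b \ge 1$, and you would want to compare your swap-and-augment idea against the $\{Y,E\}$-tiling decomposition in Section~\ref{sec:6}, which replaces shifting entirely by a direct case analysis on the reduced tripartite graphs $G_T$.
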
 
	
	The bounds in the conjecture come from two extremal constructions: a complete $k$-graph on $k(s+1)-1$ vertices and a  complete $k$-graph on $n$ vertices with a complete $k$-graph on $n-s$ vertices removed.
	The case $ s=1 $ is the classical Erd\H os--Ko--Rado Theorem \cite{Erd1974Intersection}. For $ k = 1 $ the conjecture is trivial and for $ k = 2 $ it was proved by Erd\H os and Gallai \cite{1959On}. For $ k = 3 $, it was settled by Frankl,  R\"odl and Ruci\'nski in the case $ n \ge4s $ in \cite{2012On},  and by  \L uczak and Mieczkowska for all $ n $ and $ s \ge s_0 $ in \cite{Tomasz2014On}, and finally, completely resolved by Frankl in \cite{Frankl2017On}. 
	For general $ k$, Erd\H os \cite{MR260599} himself proved the conjecture for $ n>n_{0}(k,s) $.
	Subsequent improvements on $n_0$ have been done by various authors~\cite{BOLLOB1976SETS,2012The,2013Improved,0THE} and the current state of art is $ n_0\le \frac{5}{3}sk-\frac{2}{3}s $ for sufficiently large $ s$ by Frankl and Kupavskii \cite{0THE}. Also, Frankl \cite{Frankl2017Proof} showed the conjecture for all $ n\le(s+1)(k+\varepsilon) $, where $ \varepsilon $ depends on $ k $ only.
	
	
	\subsection{$Y_{k,b}$-tilings}
	We consider a generalization of this problem to $F$-tilings, and as the simplest case, we consider the $k$-graphs $ F $ with two (intersecting) edges.
	For $k>b\ge 0$, let $Y_{k,b}$ be the $k$-graph consisting of two edges that intersect in exactly $b$ vertices. 
	Note that a $Y_{k,0}$-tiling is just a matching, so we shall only consider $b\ge 1$.
	The extremal examples for Conjecture~\ref{conj} suggest the following bounds for $k$-graphs without $Y_{k,b}$-tilings of certain size, and we formulate it as a conjecture.
	\begin{conj}
		\label{conj1}
		Let $ n,s,k,b $ be positive integers such that $ k>b>0$ and $ n \ge (2k-b)(s+1)-1$. 
		If $H$ is a $ k $-graph on $ n $ vertices which does not have a $Y_{k,b}$-tiling of size $s+1$, then
		\[ 
		e(H)\le \max \left \{ \binom{(2k-b)(s+1)-1}{k}, \binom{n}{k}-\binom{n-s}{k} \right \}+o(n^k). 
		\]
	\end{conj}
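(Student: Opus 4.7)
The plan is to combine a stability analysis with an extension/absorbing argument. Since the bound $\binom{n}{k}-\binom{n-s}{k}+o(n^k)$ exceeds the known Erd\H{o}s matching threshold for $s+1$ edges in the regime where the second extremal construction dominates, my first step is to apply the matching results cited in the introduction to extract a matching $M \subseteq H$ of size $s+1$ (or slightly larger). The core task is then to upgrade $M$ into a $Y_{k,b}$-tiling of the same size: for each $e \in M$, attach a partner edge $e'$ with $|e' \cap e| = b$ and $e'$ disjoint from $V(M) \setminus e$.

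Concretely, I would proceed in three steps. \textbf{Step 1 (Regularity).} Apply the hypergraph regularity lemma to $H$ and pass to a cluster $k$-graph on parts $V_1,\dots,V_t$; a density-increment consideration allows us to assume most clusters are of comparable density. \textbf{Step 2 (Extension counting).} For each $e \in M$ and each $b$-subset $B \subset e$, count edges $f \in E(H)$ with $f \cap e = B$ and $f$ disjoint from $V(M) \setminus e$. Since $|V(M)| = k(s+1)$ is small compared with $n$ and, by regularity, most of $e(H)$ survives the removal of a set of size $k(s+1)$, the total extension count is $\Omega(n^{k-b})$ per matching edge unless $H$ exhibits an anomalous local structure. \textbf{Step 3 (Dichotomy).} Either a greedy or random-matching argument in the auxiliary bipartite extension hypergraph attaches pairwise-disjoint partners to every $e \in M$, producing the desired $Y_{k,b}$-tiling; or failure of this step forces $H$ to be $o(n^k)$-close to one of the two extremal constructions, in which case the claimed bound follows by direct edge counting in the near-extremal configuration.

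The hardest part will be the stability/dichotomy of Step~3 in the intermediate regime where neither extremal construction clearly dominates, since there the obstruction to extending $M$ need not manifest as proximity to a single extremal shape. Handling this likely requires an absorbing gadget tailored to $Y_{k,b}$: a small vertex set $A \subseteq V(H)$ such that, for every small $L \subseteq V(H) \setminus A$ with $|L|$ a multiple of $2k-b$, the induced hypergraph $H[A \cup L]$ contains a perfect $Y_{k,b}$-tiling. Constructing such absorbers under only an edge-density hypothesis (rather than a minimum-degree hypothesis), and combining them with the stability dichotomy so that the final tiling has exactly size $s+1$, is the principal technical obstacle I anticipate; it is also where I would expect the factor-of-seven restriction in the paper's $Y_{3,2}$ result to come from, suggesting that fully removing such restrictions will require a genuinely new stability tool.
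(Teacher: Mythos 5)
The statement you are addressing, Conjecture~\ref{conj1}, is a \emph{conjecture} in the paper, not a theorem: the authors explicitly pose it as open and only establish two partial results toward it, namely Theorem~\ref{ineg} (by induction on the tiling size, valid only for $s$ up to roughly $n/((2(2k-b)^2+1)(k-1))$) and Theorem~\ref{edge th2} (the case $k=3$, $b=2$, valid only for $s\le n/7$, where the true range of the conjecture would extend to $s\le n/4 - O(1)$). There is therefore no paper proof of the statement for your proposal to be compared against; a correct blind proof of Conjecture~\ref{conj1} in full would constitute a genuinely new result.

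That said, the route you describe does not work, and the paper itself explains the central obstruction in a footnote to Theorem~\ref{edge th2}. Your Step~1 extracts a matching of size $s+1$ from the edge bound via the Erd\H{o}s matching threshold. But a matching of size $s+1$ occupies only $k(s+1)$ vertices, whereas a $Y_{k,b}$-tiling of size $s+1$ occupies $(2k-b)(s+1)$ vertices; upgrading each matching edge to a $Y_{k,b}$ requires injecting $(k-b)(s+1)$ fresh, pairwise-disjoint vertices, and nothing in the hypothesis guarantees enough room or enough suitable extension edges. The paper's footnote makes this loss explicit for $Y_{3,2}$: after regularity, a matching of size $s$ only yields a $Y_{3,2}$-tiling covering $3s$ vertices, not $4s$, which is strictly worse than what Theorem~\ref{edge th2} achieves. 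Your Step~2 asserts that each matching edge has $\Omega(n^{k-b})$ pairwise-disjoint-compatible extensions ``unless $H$ exhibits an anomalous local structure,'' but with only an edge-density hypothesis and no minimum-degree control, some matching edges may lie in sparse neighborhoods and have essentially no extensions at all; this step is a gap, not a lemma.

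Your Step~3 is the honest acknowledgment of where the real difficulty lies, and you are right that the factor-of-$7$ restriction is tied to it. But it is worth being precise about how far your dichotomy is from what the paper actually does. The paper does \emph{not} prove a stability theorem of the form ``either the tiling extends or $H$ is $o(n^k)$-close to an extremal configuration''; such a statement is exactly the missing content that would be needed to resolve the conjecture, and the authors do not claim it. Instead, Theorem~\ref{ineg} is a direct induction on $s$ using the Frankl--F\"uredi Tur\'an bound for $Y_{k,b}$ (Theorem~\ref{FrFu}) and a maximum-degree dichotomy, which only closes for small $s$; and Theorem~\ref{edge th2} sidesteps the matching approach entirely by taking a \emph{maximum} $\{Y,E\}$-tiling (a mix of $Y_{3,2}$-copies and single edges), classifying all edges of $H$ by their intersection pattern with the tiling and the uncovered set $U$, and bounding the quantities $y_1,\dots,y_4$ via intricate case analysis (Facts~\ref{f3}--\ph{a}\ref{f5} and Claims~\ref{c1}--\ref{c5}) before converting back to a pure $Y$-tiling via regularity. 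The $\{Y,E\}$-mixing is precisely what avoids the vertex-count loss inherent in matching-first approaches. So the concrete gap in your proposal is twofold: Step~1 starts from an object (a matching) that provably cannot carry enough vertices, and Step~3 posits a stability dichotomy that neither you nor the paper actually proves. If you want to pursue the paper's range of validity, you should start from a maximum $\{Y_{k,b},E\}$-tiling rather than a matching, and expect the analysis to remain restricted in $s$ for the same reasons the paper's is.

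\textit{(Erratum to the compiled text above: the reference ``Facts~\ref{f3}--\ph{a}\ref{f5}'' should read ``Facts~\ref{f3} through~\ref{f5}''.)}
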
 
	The case $ s=0$ is an old conjecture of Erd\H{o}s \cite{MR0409246} and was resolved by Frankl and F\"uredi \cite{Peter1985Forbidding} (see Theorem~\ref{FrFu}), which  says that a $Y_{k,b}$-free $k$-graph (in which, no pair of edges intersect in exactly $b$ vertices) has $o(n^k)$ edges.  For a $ k $-graph $ F $ and $ n\in \mathbb N $, the Tur\'an number of $ F $ is defined as $ \text{ex}(n, F)=\max\{e(H):|V(H)|=n, F\nsubseteq H\} $. So the result of \cite{Peter1985Forbidding} says $ \text{ex}(n,Y_{k,b})=o(n^k) $.

	The major terms in the conjecture above come from a complete $k$-graph on $(2k-b)(s+1)-1$ vertices and a  complete $k$-graph on $n$ vertices with a complete $k$-graph on $n-s$ vertices removed.
	Moreover, in each example, one can replace the (large) independent sets by a $Y_{k,b}$-free $k$-graph.
	Thus, in the range where the second term above is significantly larger, it is reasonable to conjecture a tight bound $\binom{n}{k}-\binom{n-s}{k}+\text{ex}(n-s, Y_{k,b})$.
	
	Overall, the study of $Y_{k,b}$-tilings is natural and important, because
	\begin{itemize}
		\item the problem generalizes the classical conjecture of Erd\H{o}s on matchings;
		\item $Y_{k,b}$-free $k$-graphs were studied by Frankl and F\"uredi, which resolved a conjecture of Erd\H{o}s;
		\item as we shall see later, $Y_{k,b}$-tilings have important applications on the Hamilton cycle problems in hypergraphs.
	\end{itemize}

	Now we present our results on $Y_{k,b}$-tilings.
	First, we use induction on $s$ and prove the following result for small $s$.
	
	\begin{thm} \label{ineg}
		For $k\ge 3$ and $1\le b \le k -1$ there exists $s_0>0$ such that the following holds whenever $s \ge s_0$.
		Let $H$ be an $n$-vertex $k$-graph and $n \ge (2(2k-b)^2+1)(k-1)s+s$.
		If 
		\[
		e(H) \ge \binom{n}{k} - \binom{n-s+1}{k} + \binom{n-1}{k-1}+ \binom{n-1}{k-2}(2k-b)s,
		\]
		then $H$ contains a $Y_{k,b}$-tiling of size $s$.
	\end{thm}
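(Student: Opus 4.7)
The plan is induction on $s$, with base case $s = s_0$ for a suitable constant $s_0 = s_0(k,b)$. Write $a := 2k - b$ and let $f(n,s)$ denote the right-hand side of the edge bound. For the base case one establishes a $Y_{k,b}$-tiling of size $s_0$ by iterated extraction combined with the Frankl--F\"uredi theorem $\ex(n, Y_{k,b}) = o(n^k)$ and a vertex-degree trimming step, choosing $s_0$ large enough for the averaging to be effective.

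For the inductive step, assume the statement holds for $s-1$ and let $H$ satisfy $e(H) \ge f(n,s)$. I aim to find a copy $C$ of $Y_{k,b}$ such that $H' := H - V(C)$ satisfies the hypothesis with parameters $(n-a, s-1)$; the inductive hypothesis then supplies an $(s-1)$-tiling in $H'$ which, together with $C$, yields the desired tiling of size $s$. A direct binomial computation (using $\binom{n}{k} - \binom{n-a}{k} = \sum_{j=1}^{a}\binom{n-j}{k-1}$ and analogous telescopings) gives
\[
f(n,s) - f(n-a, s-1) = \binom{n-1}{k-1} + (a-1)(s-1)\binom{n-2}{k-2} + O(n^{k-2}),
\]
so the task reduces to finding a copy $C$ whose total vertex-degree $\sum_{v \in V(C)} d_H(v)$ lies within this ``budget''.

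To locate such a $C$, I plan a case analysis based on how close $H$ is to the extremal configuration. If $H$ has a small ``cover'' set $S$ with $|S| \le s-1$ such that most edges of $H$ touch $S$ (the near-extremal regime), then $H[V(H) \setminus S]$ has only $o(n^k)$ edges but, by the slack built into our edge hypothesis, still contains some copies of $Y_{k,b}$, and one selects $C$ inside $V(H)\setminus S$, whose vertices then automatically have controlled degree in $H$. If $H$ is far from extremal (no such small cover exists), a degree-counting argument shows that many copies of $Y_{k,b}$ are spread across $H$, and one can be chosen with low total degree via Frankl--F\"uredi applied to a carefully selected subgraph of low-degree vertices.

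The main obstacle is making the transition between the two regimes quantitative, so that every $H$ satisfying the hypothesis falls into one of them and the corresponding argument delivers $C$ within budget. The condition $n \ge (2(2k-b)^2+1)(k-1)s + s$ is calibrated precisely for this dichotomy: it guarantees that $n$ is large enough compared to $s$ that in the near-extremal case $|V(H)\setminus S|$ retains more than $\ex(\cdot, Y_{k,b})$ edges, while in the far-from-extremal case the low-degree subgraph contains enough edges to force a copy of $Y_{k,b}$. Verifying the precise stability threshold, especially in the intermediate regime of $e(H)$ not obviously close to either extremum, will be the most delicate step in the proof.
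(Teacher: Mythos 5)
Your proposal shares the surface feature of ``peel off something and induct'' with the paper, but the actual mechanism you propose is genuinely different and, as written, has a real gap. You want to remove an entire copy $C$ of $Y_{k,b}$ (losing $a=2k-b$ vertices at once), which forces you to locate a copy whose \emph{total} vertex degree is within a precise budget $f(n,s)-f(n-a,s-1)$. You then attempt a near-extremal / far-from-extremal dichotomy to produce such a low-degree copy, and you rightly flag that the ``intermediate regime'' is the unresolved part. That flag is exactly where the proposal breaks: the argument in the near-extremal regime is not sound as stated --- a vertex outside a small cover $S$ can still have enormous degree through edges that do touch $S$, so ``one selects $C$ inside $V(H)\setminus S$, whose vertices then automatically have controlled degree'' does not follow; and the far-from-extremal ``degree-counting'' step is only a hope, not an argument. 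Moreover your base case at some large constant $s_0$ would itself require a separate tiling argument rather than following from a clean statement.

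The paper avoids the whole issue by never trying to find a low-degree $Y$-copy. It inducts on the tiling size $t$ from the trivial base case $t=1$ (a single application of Frankl--F\"uredi, since $s_0$ is chosen only so $n$ is large enough for that theorem, not as the start of the induction). In the step from $t-1$ to $t$, the dichotomy is on the \emph{maximum degree}: if some vertex $v$ has degree exceeding roughly $\binom{n-1}{k-1}-\binom{n-1-(2k-b)(t-1)}{k-1}+\binom{n-1}{k-2}$, one removes \emph{that single vertex}, applies the inductive hypothesis to $H-v$ to get a $Y$-tiling $M$ of size $t-1$, and then applies Frankl--F\"uredi to the \emph{link} $N(v)$ --- after discarding the few $(k-1)$-sets meeting $V(M)$, the link still has enough edges to contain a $Y_{k-1,b-1}$, yielding a $Y_{k,b}$ through $v$ disjoint from $M$. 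If no such vertex exists, then $\Delta(H)$ is small, and since $V(H)\setminus V(M)$ is $Y$-free (by failure of the theorem for $t$), a direct count $e(H)\le \binom{n-1}{k-1}+\Delta(H)(2k-b)(t-1)$ contradicts the hypothesised edge lower bound once one rearranges binomials and uses $n\ge(2(2k-b)^2+1)(k-1)s+t$. This removes-one-vertex-at-a-time approach with the link trick is what eliminates the intermediate regime you are worried about; you should replace your ``budget for a full $Y$-copy'' step with this $\Delta(H)$ dichotomy, or else supply a complete stability argument for your version.
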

	Note that the largest size $s$ of a $Y_{3,2}$-tiling when Theorem~\ref{ineg} applies is 
	${n}/{67}$. 
	Our main result below verifies Conjecture~\ref{conj1} for $Y_{3,2}$-tilings of size $ \alpha {n}$ for any $0<\alpha \le  1/7$. Note that $Y_{3,2}$ was also denoted as $C_4^3$ (or $\mathcal C_4^3$) and $C_2^3$ by other authors\footnote{Under those notation they mean \emph{3-uniform loose cycles on four vertices}, or \emph{3-uniform loose cycles with two edges.}}.
	
	\begin{thm}\label{edge th2}
		For every $\alpha,\varepsilon \in (0, 1/7]$ there exists $n_0$ such that the following holds for integer $n\ge n_0$. 
		Let $H$ be a $3$-graph of order $n$ such that
		\[
		e(H)\ge\binom{n}{3}-\binom{n-\alpha n}{3}+\varepsilon n^3.
		\]
		Then $H$ contains a $Y_{3,2}$-tiling covering $ 4\alpha n$ vertices.\footnote{Theorem~\ref{edge th2} can  not be implied by Conjecture~\ref{conj} easily. Indeed, The Erd\H{o}s-Matching-Conjecture  with $ e(H)\ge \binom{n}{3} - \binom{n-s}{3} $ gives a matching of size $ s $, which after regularity, gives only a $Y_{3,2}$-tiling of size $ 3s/4 $, or equivalently, a $Y_{3,2}$-tiling covering $ 3s $ vertices, rather than $ 4s $.}
	\end{thm}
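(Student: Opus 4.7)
The plan is to argue by contradiction via the hypergraph regularity method. Assume $H$ satisfies $e(H) \ge \binom{n}{3} - \binom{n - \alpha n}{3} + \varepsilon n^3$ but has no $Y_{3,2}$-tiling covering $4\alpha n$ vertices. Fix auxiliary constants $\varepsilon' \ll d \ll \varepsilon, \alpha$ and apply the weak hypergraph regularity lemma to $H$ to obtain an equipartition $V(H) = V_0 \cup V_1 \cup \cdots \cup V_t$ with $|V_0| \le \varepsilon' n$ and $|V_i| = m$ for $i \ge 1$. Let the reduced $3$-graph $R$ on $[t]$ have $\{i,j,k\} \in E(R)$ iff $(V_i, V_j, V_k)$ is $\varepsilon'$-regular of density at least $d$. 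After discarding edges of $H$ that are irregular, inside sparse triples, or incident to $V_0$, routine counting yields $e(R) \ge \binom{t}{3} - \binom{t - \alpha t}{3} + \tfrac{\varepsilon}{2} t^3$.

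The argument then rests on two standard lifting facts. First, any $\varepsilon'$-regular tripartite $3$-graph on $(V_i, V_j, V_k)$ of density at least $d$ admits an almost-perfect $Y_{3,2}$-tiling: $(1 - o(1))\tfrac{3m}{4}$ vertex-disjoint copies covering all $3m$ vertices, obtained by mixing the three tripartite shapes $(1,1,2), (1,2,1), (2,1,1)$ of $Y_{3,2}$ symmetrically. Second, if $\{V_i, V_j, V_k\}$ and $\{V_i, V_j, V_\ell\}$ are both in $R$---that is, form a $Y_{3,2}$-copy in $R$---then there are $(1 - o(1)) m$ vertex-disjoint $Y_{3,2}$'s on $V_i \cup V_j \cup V_k \cup V_\ell$, each using one vertex from each cluster and thereby covering all $4m$ vertices. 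Hence any vertex-disjoint collection $\mathcal{F}$ in $R$ consisting of $s$ matching edges and $q$ $Y_{3,2}$-copies lifts to a $Y_{3,2}$-tiling in $H$ covering $(1 - o(1))(3s + 4q)m$ vertices, so it suffices to produce such $\mathcal{F}$ with $3s + 4q \ge 4\alpha t$.

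The core combinatorial task is the construction of $\mathcal{F}$. Frankl's resolution of the Erd\H{o}s matching conjecture for $k = 3$ gives a matching $M \subset R$ of size at least $(1 + o(1)) \alpha t$, but matching alone yields only coverage $3 \alpha t$, so we must upgrade $\alpha t$ of these edges into $Y_{3,2}$-copies. The natural plan is, for each $E_i \in M$, to attach a distinct ``extender'' cluster $D_i \in [t] \setminus V(M)$ with $\{A, B, D_i\} \in E(R)$ for some pair $\{A, B\} \subset E_i$. The existence of such a system of distinct extenders reduces to a bipartite (Hall-type) matching problem, whose feasibility should follow from the $\tfrac{\varepsilon}{2} t^3$ slack in $e(R)$; I expect a stability dichotomy to be useful here, so that in the near-extremal case (where $R$ is close to ``all triples meeting a set of $\alpha t$ clusters'') $\mathcal{F}$ can be built explicitly, while in the non-extremal case the pair-link degrees of $R$ are spread enough for a greedy or Hall argument to produce the extenders.

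The principal obstacle is this last step. A direct Hall argument for the extender system produces only $O(t^2)$ forced non-edges upon failure, which is too few to contradict the $\Theta(t^3)$ non-edge budget of $R$; thus the edge slack must be exploited much more carefully, potentially through iterative application of Theorem~\ref{ineg} inside $R$, an absorbing augmentation, or a fine stability partition. Doing so sharply, up to the threshold $\alpha \le 1/7$ (which enters via the relative sizes $3\alpha t$ and $(1-3\alpha)t$ of $V(M)$ and its complement in $[t]$), is the heart of the proof and the place that will demand the most care.
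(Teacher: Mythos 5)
Your reduction to the cluster graph is correct and matches the paper: with the regularity lemma, $R$ inherits the density hypothesis, a matching edge of $R$ lifts to a $Y_{3,2}$-tiling covering roughly $3m$ vertices inside the corresponding triple, and a copy of $Y_{3,2}$ in $R$ (two cluster-edges sharing a pair) lifts to vertex-disjoint copies that cover nearly all $4m$ vertices of the four clusters. Hence, as you say, it suffices to find a vertex-disjoint family in $R$ consisting of $s$ edges and $q$ copies of $Y_{3,2}$ with $3s+4q\ge 4\alpha t$, i.e.\ a $\{Y,E\}$-tiling of $R$ covering more than $4\alpha t$ vertices. This is precisely the paper's Theorem~\ref{edge th1}.

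The genuine gap is that you do not prove this key step, and your proposed route for it will not close. You sketch starting from Frankl's maximum matching $M$ of size $\approx\alpha t$ and then ``upgrading'' $\alpha t$ of its edges to $Y_{3,2}$-copies by attaching distinct extender clusters; you correctly observe that a Hall-type argument only forces $O(t^2)$ missing triples when it fails, which cannot contradict a $\Theta(t^3)$ edge budget, and you leave the gap as ``a stability dichotomy, an absorbing augmentation, or a fine stability partition'' --- none of which is carried out. So the proposal is an outline with its core lemma unproved, and the outlined attack on that lemma is dead-ended by your own diagnosis.

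The paper takes an entirely different, non-stability route to Theorem~\ref{edge th1}. It first observes that it suffices to handle $\alpha=1/7$ (padding the host with a clique to shift $\alpha$). It then fixes a maximum $\{Y,E\}$-tiling $\cT$ with $m_1$ copies of $Y$ and $m_2$ edges, assumes for contradiction $4m_1+3m_2=4\alpha n$, sets $U$ to be the uncovered set (so $|U|=3n/7$), and classifies all edges by their intersection pattern with $U$ and with the members of $\cT$. Maximality of $\cT$ yields a collection of structural constraints (the Facts~\ref{f2}--\ref{f5} and Claims~\ref{c1}--\ref{c5}) that bound the number of $(1,2)$-, $(2,1)$- and $(3,0)$-edges; the $(2,1)$- and $(3,0)$-edges are bounded jointly through the carefully chosen combinations $y_1,\dots,y_4$, each bounded by a case analysis on triples $T$ drawn from $\cT$ together with the auxiliary graph $G_T$ that records which cross-pairs extend to many copies of $Y$ through $U$. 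Summing gives an explicit upper bound on $e(H)$ that simplifies (Appendix B) to $\binom{n}{3}-\binom{n-n/7}{3}+O(n^2)$, contradicting the hypothesis. This is a direct extremal counting argument with no matching-to-$Y$ upgrading, no Hall argument and no stability.

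Two smaller points. First, the paper does not need the stability/non-extremal dichotomy even near the threshold $\alpha=1/7$; the threshold enters only through $|U|/m_1\ge 3$ and $|U|/m_2\ge 9/4$ in the $y_i$-estimates. Second, to convert ``covering more than $4\alpha t$ clusters (up to $o(1)$ loss per cluster)'' into the clean conclusion ``covering at least $4\alpha n$ vertices of $H$,'' the paper first removes a small $Y$-tiling $M'$ of size $\varepsilon' n$ obtained from Theorem~\ref{ineg} and applies regularity to what remains; your proposal does not address this bookkeeping.
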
	


The minimum 2-degree threshold forcing perfect $Y_{3,2}$-tilings has been studied in~\cite{Daniela2006Loose,2013Tiling} and the corresponding minimum vertex-degree threshold was determined in~\cite{2015Minimum}.
The study of $Y_{3,2}$-tilings is also motivated by its connection to Hamilton $\ell$-cycles in $k$-graphs (see e.g.~\cite{Han2015Minimum}), which we describe in the next subsection.
At last, we remark that optimal minimum-degree-type tiling results are rather rare, not to mention edge-density-type results (as far as we know, known results are essentially on matchings).

	\subsection{Hamilton cycles in $k$-graphs}
	As applications of Theorems~\ref{ineg} and~\ref{edge th2}, we prove new results on the Hamilton $\ell$-cycle problem. 
	The existence of Hamilton cycles in graphs is a fundamental problem of graph theory with a rich history. 
	The classical theorem of Dirac \cite{1952Some} states that every graph $G$ on $n\ge3$ vertices with minimum degree $\delta(G)\ge n/2$ contains a Hamilton cycle. In recent years, researchers have worked on extending this result to hypergraphs (see recent surveys, \cite{MR3727617,Vojtech2010Dirac}).  
	Given a $ k $-graph $H=(V,E)$ with a vertex set $S\in \binom{V}{d} $, 
	we denote by $ N(S) $ the family of $T\in\binom{V}{k-d} $ such that $ T\cup S\in E $ and $ \deg_H(S):=|N(S)| $. 
	The \emph{minimum $ d $-degree} $ \delta_{d}(H) $ of $H$ is the minimum of $ \deg_{H}(S) $ over all $ d $-element vertex sets $S$ in $H$.
	
	For $ 1\le \ell <k $, a $ k $-graph is called an $ \ell $-cycle if there exists a cyclic ordering of its vertices such that every edge is composed of $ k $ consecutive vertices, two (vertex wise) consecutive edges share exactly $ \ell $ vertices. If the ordering is not cyclic, we call it an $ \ell $-path and we say the first and last $ \ell $ vertices are the ends of the path. A $ k $-graph on $ n $ vertices contains a Hamilton $\ell$-cycle  if it contains an $ \ell $-cycle as a spanning subhypergraph, so $(k-\ell)\mid n$. Note that an $ \ell $-cycle on $ n $ vertices contains exactly $ n/(k-\ell) $ edges.
	
	We define the \emph{Dirac threshold} $ h_{d}^{\ell}(k,n) $ to be the smallest integer $ h $ such that every	$ n $-vertex $ k $-graph $ H $ satisfying $ \delta_{d}(H)\ge h $ contains a Hamilton $ \ell $-cycle. Let $ h_{d}^{\ell}(k):=\limsup_{n\to \infty}h_{d}^{\ell}(k,n)/\binom{n}{k-d} $. 
	Confirming a conjecture of Katona and Kierstead \cite{2006Hamiltonian},  R\"odl, Ruci\'nski, and Szemer\'edi \cite{2008An,VOJTECH2006A} determined $ h_{k-1}^{k-1}(k) =1/2$ for any fixed $ k $, that is, $\delta_{k-1}(H)\ge (1/2+o(1))n$ guarantees $(k-1)$-Hamiltonicity. 
	The asymptotical Dirac threshold $ h_{k-1}^{\ell}(k)$ for any $ 1 \le \ell < k $ such that $(k-\ell)\mid k$ follows as a consequence of this result and a construction of Markstr\"om and Ruci\'nski. 
	When $(k-\ell)\nmid k$, the threshold $ h_{k-1}^{\ell}(k) $ was determined through a series of works  \cite{Daniela2006Loose, 2010Loose, Hi2010Dirac, Daniela2010Hamilton}.
	We collect these results in the following theorem.
	
	\begin{thm}\cite{Hi2010Dirac,2010Loose,Daniela2010Hamilton,VOJTECH2006A,2008An} 
		For any $k> \ell \ge 1$, we have	
		\[h_{k-1}^\ell(k)= 
		\begin{cases}
			1/2 & {(k-\ell) \mid k}\\
			\frac{1}{\lceil \frac{k}{k-\ell}\rceil (k-\ell)} & {(k-\ell) \nmid k}.
		\end{cases}
		\]
	\end{thm}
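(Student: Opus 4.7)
The theorem compiles matching lower and upper bounds from several papers, so the plan is to assemble each direction separately. For the lower bound when $(k-\ell) \nmid k$, fix $A \subseteq V$ with $|A| = \lfloor n/(\lceil k/(k-\ell)\rceil (k-\ell))\rfloor - 1$ and let $H$ consist of all $k$-subsets meeting $A$. A Hamilton $\ell$-cycle has $n/(k-\ell)$ edges, each using a vertex of $A$, while any single vertex lies in at most $\lceil k/(k-\ell)\rceil$ edges of the cycle (by counting how often cyclic blocks of size $k-\ell$ can sweep past it), giving a contradiction. A routine check that $\delta_{k-1}(H)$ matches the claimed value then yields the stated bound. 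When $(k-\ell) \mid k$, the space-partition construction of Markstr\"om--Ruci\'nski, together with the tight-cycle construction of R\"odl--Ruci\'nski--Szemer\'edi for the $\ell = k-1$ case, gives the lower bound $1/2$.

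For the upper bound I would run the absorbing method. Step 1 (Absorbing Lemma): show that each vertex $v$ has many short $\ell$-path absorbers $A_v$ such that both $A_v$ and $A_v \cup \{v\}$ span $\ell$-paths with identical ends; a random selection leveraging the codegree hypothesis then concatenates these into a single absorbing $\ell$-path $P_{\mathrm{abs}}$ capable of swallowing any small leftover. Step 2 (Connecting Lemma): two $\ell$-tuples with sufficiently large common codegree neighborhoods can be joined by a short $\ell$-path, by pushing vertices forward one at a time. Step 3 (Almost Cover): apply a near-perfect $\ell$-path tiling result in $H \setminus V(P_{\mathrm{abs}})$, leaving only a tiny set $U$ uncovered. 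Step 4: use the Connecting Lemma to splice $P_{\mathrm{abs}}$ with the $\ell$-paths into a single cycle avoiding $U$, then absorb $U$ into $P_{\mathrm{abs}}$.

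The divisibility dichotomy enters Steps 1 and 3. When $(k-\ell) \mid k$, consecutive edges along an $\ell$-path partition vertices into blocks of size $k-\ell$, every vertex sits in the same number of cycle edges, and the almost-cover reduces to a matching-type problem with threshold $1/2$. When $(k-\ell) \nmid k$, the block structure is irregular: some vertices appear in $\lfloor k/(k-\ell)\rfloor$ cycle edges and others in $\lceil k/(k-\ell)\rceil$, which is exactly what produces the denominator $\lceil k/(k-\ell)\rceil (k-\ell)$. Designing absorbers and tiling building blocks that respect this irregular structure is the additional work beyond the tight-cycle case.

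The main obstacle is Step 3, the almost-cover: one needs a tiling result matching the asymptotically correct threshold, not merely an existence result, and it must interact cleanly with the codegree hypothesis used for connecting and absorbing. In the non-divisibility case this requires a careful argument that does not overspend the codegree budget used elsewhere; in the divisibility case one must additionally verify that the reduction to the $\ell = k-1$ threshold is sharp. These two technical points are where the bulk of the effort in the referenced papers is invested.
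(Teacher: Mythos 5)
The statement you are proving is not proved in the paper at all: it is quoted as a known theorem with citations to \cite{Hi2010Dirac,2010Loose,Daniela2010Hamilton,VOJTECH2006A,2008An}, so there is no internal proof to compare against. Judged on its own, your proposal is a reasonable roadmap of how the literature establishes these thresholds, and your space-barrier construction for the case $(k-\ell)\nmid k$ is essentially correct (it is the same construction as Proposition~\ref{pr} of this paper, specialized to $d=k-1$: a $(k-1)$-set disjoint from $A$ has degree exactly $|A|$, and the edge/vertex count $|A|\cdot\lceil k/(k-\ell)\rceil < n/(k-\ell)$ rules out a Hamilton $\ell$-cycle). But as a proof it has genuine gaps: Steps 1--3 of your upper-bound argument (absorbers respecting the irregular block structure, the connecting lemma, and above all the almost-cover at the asymptotically sharp threshold) are exactly the content of the cited papers, and you explicitly defer them rather than supply them. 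An outline that says ``the bulk of the effort in the referenced papers is invested here'' does not discharge those steps.

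There are also two concrete inaccuracies in the divisible case $(k-\ell)\mid k$. First, the lower bound $1/2$ cannot come from any space-barrier-type construction: when $(k-\ell)\mid k$ one has $\lceil k/(k-\ell)\rceil(k-\ell)=k$, so a vertex-cover argument only yields $1/k$. The bound $1/2$ requires the parity (odd/even bipartition) construction of Markstr\"om and Ruci\'nski, which you name but do not describe, and whose mechanism is entirely different. Second, you have the roles of the ingredients reversed: the R\"odl--Ruci\'nski--Szemer\'edi result $h_{k-1}^{k-1}(k)=1/2$ supplies the \emph{upper} bound in the divisible case (a tight Hamilton cycle contains a Hamilton $\ell$-cycle when $(k-\ell)\mid k$ and $(k-\ell)\mid n$), not the lower bound; relatedly, your claim that in this case ``the almost-cover reduces to a matching-type problem with threshold $1/2$'' misattributes where the constant $1/2$ arises -- almost-perfect covers and tilings need far less than $1/2$ codegree, and in the tight-cycle proof the threshold is forced by the extremal parity construction, not by the cover step. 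To make your write-up correct you should either cite the five references for the theorem as the paper does, or, if you intend a self-contained proof, actually carry out the absorbing/cover lemmas at the stated thresholds and give the Markstr\"om--Ruci\'nski construction explicitly.
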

	
	Some exact thresholds $ h_{k-1}^{\ell}(k,n) $ are known: for $ k =3$ and $\ell= 2$~\cite{2011Dirac} and for $ k \ge 3  $ and $ 1\le \ell < k/2 $~\cite{CZYGRINOW2014TIGHT,Jie2015Minimum}. 
	For $d=k-2$, the exact thresholds are known for $1\le \ell<k/2$~\cite{Bu2013Minimum, Han2015Minimum,Bastos2016Loose,J2017Loose}, but let us only state the asymptotical threshold $h_{k-2}^\ell(k)$ here. 
	
	\begin{thm} \cite{Bu2013Minimum, Bastos2016Loose}
		For integers $k\ge 3$ and $ 1\le\ell < k/2$, we have 
		\[
		h_{k-2}^\ell(k) = 1-\left(1-\frac{1}{2(k-\ell)} \right)^{2}.
		\]
	\end{thm}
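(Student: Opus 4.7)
The theorem requires matching lower and upper bounds, and I would address them separately.

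\textbf{Lower bound.} Partition $V$ with $|V|=n$ as $A\cup B$ where $|B|=\lceil n/(2(k-\ell))\rceil-1$, and take $H$ to be the $k$-graph of all $k$-sets meeting $B$. For any $(k-2)$-set $S$ disjoint from $B$,
\[
\deg_H(S)=\binom{n-k+2}{2}-\binom{n-k+2-|B|}{2}=\Big(1-\Big(1-\tfrac{1}{2(k-\ell)}\Big)^{2}+o(1)\Big)\binom{n}{2}.
\]
A Hamilton $\ell$-cycle contains $n/(k-\ell)$ edges, each meeting $B$; since $\ell<k/2$ forces two consecutive edges to share only $\ell<k-\ell$ vertices, every vertex lies in at most two edges of the cycle. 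Hence a Hamilton $\ell$-cycle would need $|B|\ge n/(2(k-\ell))$, a contradiction.

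\textbf{Upper bound.} Assuming $\delta_{k-2}(H)\ge \bigl(1-(1-1/(2(k-\ell)))^{2}+\varepsilon\bigr)\binom{n}{2}$, I would apply the three-stage absorption framework of R\"odl--Ruci\'nski--Szemer\'edi. \emph{(i) Absorbing path.} Using the degree hypothesis, show that every vertex has $\Omega(n^{2k-\ell-1})$ ``absorber'' gadgets, then sample randomly and concatenate to obtain a short $\ell$-path $P_{\mathrm{abs}}$ of length $o(n)$ that can swallow any vertex set $L$ with $|L|\le\beta n$ and $(k-\ell)\mid|L|$ while preserving its endpoints. \emph{(ii) Connecting and reservoir.} Prove a connecting lemma that any two typical $\ell$-tuples are joinable by an $\ell$-path of bounded length, and set aside a random reservoir $R$ of size $o(n)$ supporting many simultaneous such connections. \emph{(iii) Near-perfect tiling.} Using the weak hypergraph regularity lemma, produce an $\ell$-path tiling of $V\setminus(V(P_{\mathrm{abs}})\cup R)$ covering all but $\beta n$ vertices; since a two-edge $\ell$-path on $2k-\ell$ vertices is exactly a copy of $Y_{k,\ell}$, this reduces to a $Y_{k,\ell}$-tiling problem in the reduced hypergraph, attacked by a fractional tiling argument in the cluster weighted graph. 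Finally, chain the tiling paths through $R$, absorb the leftover vertices into $P_{\mathrm{abs}}$, and close the cycle using a last application of the connecting lemma.

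\textbf{Main obstacle.} The hardest ingredient is step (iii): one must convert the $(k-2)$-degree hypothesis into a near-spanning $Y_{k,\ell}$-tiling at precisely the tight threshold $1-(1-1/(2(k-\ell)))^{2}$, with no slack beyond $o(1)$. In the smallest case $k=3,\ \ell=1$ this is exactly the regime addressed by Theorem~\ref{edge th2} (giving the threshold $7/16$), so the tiling step is already nontrivial even there; for general $k$ one needs a $(k-2)$-degree-driven fractional tiling in the reduced hypergraph whose extremal profile matches the construction above. A secondary but delicate point is that the absorbers and connecting gadgets must be designed so that their endpoints are $\ell$-tuples and their local neighbourhoods are controlled by a $(k-2)$-degree bound, which is structurally weaker than the $(k-1)$-degree information used in the original R\"odl--Ruci\'nski--Szemer\'edi arguments.
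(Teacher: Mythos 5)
This statement is cited in the paper from external references, so there is no in-paper proof to compare against; I will assess the proposal on its own merits and against the paper's general machinery for such results.

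Your lower bound is correct and is essentially Proposition~\ref{pr} of the paper specialized to $d=k-2$. Your upper-bound framework (absorbing path, connecting lemma, reservoir, near-spanning path cover) is the right scaffolding. However, step (iii) contains a genuine error: the tiling object in the reduced hypergraph must be $Y_{k,2\ell}$, not $Y_{k,\ell}$. The observation that a two-edge $\ell$-path on $2k-\ell$ vertices is a copy of $Y_{k,\ell}$ is true but is not the relevant reduction. What matters is the cluster-distribution of a \emph{long} $\ell$-path inside a regular $k$-tuple of clusters: by Fact~\ref{f1}, a $t$-edge $\ell$-path uses $k-2\ell$ clusters at rate $t$ and the remaining $2\ell$ clusters at rate $\approx t/2$. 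Pairing two such long paths across the two $k$-edges of a $Y_{k,2\ell}$ in the reduced graph --- with the $2\ell$ shared clusters serving as the ``half-rate'' clusters for both --- covers all $2k-2\ell$ clusters nearly uniformly. If you instead lift a $Y_{k,\ell}$ from the reduced graph, the $\ell$ shared clusters can be balanced but each copy leaves $\ell$ slow clusters roughly half-empty, so the total uncovered set is $\Theta(n)$, which the $o(n)$ absorber cannot swallow. Alternatively, tiling $H$ directly with two-edge $\ell$-paths produces $\Theta(n)$ disjoint pieces, far more than the $o(n)$ reservoir can stitch together. There is also a quantitative mismatch: the space barrier for $Y_{k,\ell}$-tilings gives threshold $1-\bigl(1-\tfrac{1}{2k-\ell}\bigr)^{k-d}$, not $1-\bigl(1-\tfrac{1}{2(k-\ell)}\bigr)^{k-d}$ --- these agree only when $\ell=0$. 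Finally, your own check $k=3,\ell=1$ betrays the slip: you invoke Theorem~\ref{edge th2} about $Y_{3,2}$-tilings, i.e.\ $Y_{k,2\ell}$, which is inconsistent with the stated reduction to $Y_{k,\ell}=Y_{3,1}$. Replacing $Y_{k,\ell}$ with $Y_{k,2\ell}$ in step (iii), invoking the balanced lifting described above, and using a fractional $Y_{k,2\ell}$-tiling in the reduced hypergraph at the threshold $1-(1-\tfrac{1}{2(k-\ell)})^{2}$ would repair the argument.
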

	
	Recently, Polcyn, Reiher, R\"odl, Ruci\'nski, Schacht and Szemer\'edi \cite{Reiher2019Minimum,Polcyn2020Minimum}  showed the asymptotical Dirac threshold of tight Hamilton cycle in 3-graph and 4-graph, that is, $h_{1}^{2}(3)=h_{2}^{3}(4)=5/9$. Lang and Sanhueza-Matamala	 \cite{2020Minimum} proved that $ h_{k-2}^{k-1}(k)=5/9$ for all $ k \ge 3 $ (the same	result was also proved independently by Polcyn,  Reiher, R\"odl, and Sch\"ulke \cite{2020On}) and also provided a general upper bound of $ 1-1/(2(k-d)) $ for $ h_{d}^{k-1}(k)$, narrowing the gap to the lower bound of $ 1-1/\sqrt{k-d}$ due to Han and Zhao \cite{2016Forbidding}.
	For $ d\le k-3 $, H\`an, Han and Zhao \cite{han2021minimum} determined the exact $ h_{d}^{k/2}(k,n) $ for any even integer $ k \ge 6 $, integer $ d $ such that  $ k/2 \le d \le k-1 $. 
	
	In this paper we determine more thresholds for $d\le k-3$ and $\ell<k/2$. 
	\begin{thm} \label{thm1}
		Suppose that $k\ge3$, $k-\ell\le d < 2\ell \le k-1$ such that $2k-2\ell \ge (2(2k-2\ell-d)^2+1)(k-d-1)+1$ or suppose that $ k $ is odd, $ k\ge7,\ell=(k-1)/2 $  and  $d=k-3$, then 
		\[
		h_{d}^{\ell}(k) = 1-\left(1-\frac{1}{2(k-\ell)}\right)^{k-d}.
		\] 
	\end{thm}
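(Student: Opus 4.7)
The strategy is to combine Theorems~\ref{ineg} and~\ref{edge th2} with the standard absorbing method for Hamilton cycles in hypergraphs. For the lower bound, take $A\subseteq V$ with $|A|=\lceil n/(2(k-\ell))\rceil-1$ and let $H_0$ consist of all $k$-subsets of $V$ meeting $A$. A direct count gives $\delta_d(H_0)=(h-O(1/n))\binom{n-d}{k-d}$. Since $\ell<k/2$, each vertex of an $\ell$-cycle belongs to at most two of its $n/(k-\ell)$ edges, so a Hamilton $\ell$-cycle would require $|A|\ge n/(2(k-\ell))$, which fails.

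For the upper bound, assume $\delta_d(H)\ge(h+\varepsilon)\binom{n-d}{k-d}$ and proceed in four standard phases. \emph{(i)}~By the standard absorber-counting plus probabilistic-deletion argument, produce a short absorbing $\ell$-path $P_{\mathrm{abs}}$ on $o(n)$ vertices with fixed ends that can absorb any deficit set $U\subseteq V\setminus V(P_{\mathrm{abs}})$ with $|U|\le\mu n$ and $(k-\ell)\mid|U|$. \emph{(ii)}~Randomly select a small reservoir $R\subseteq V\setminus V(P_{\mathrm{abs}})$ through which any two ordered $\ell$-tuples of outside vertices can be joined by a short $\ell$-path. \emph{(iii)}~Apply the weak hypergraph regularity lemma to $H[V\setminus(V(P_{\mathrm{abs}})\cup R)]$ and invoke Theorems~\ref{ineg} and~\ref{edge th2} (in the respective cases) to find an almost-spanning vertex-disjoint $\ell$-path tiling covering all but $o(n)$ vertices. \emph{(iv)}~Connect the $\ell$-paths from (iii) and $P_{\mathrm{abs}}$ into a single near-spanning $\ell$-cycle through $R$, then absorb the $o(n)$ leftover vertices.

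The heart of the argument is Phase (iii). On the reduced $k$-graph $\mathcal R$, the inherited $\delta_d$-condition translates, via passage to typical $(k-d)$-links, into an edge-density condition matching the hypothesis of Theorem~\ref{ineg} (in the first range, with link parameters $(k',b')=(k-d,\,2\ell-d)$ so that $2k'-b'=2k-2\ell-d$; the numerical hypothesis $2k-2\ell\ge(2(2k-2\ell-d)^2+1)(k-d-1)+1$ is precisely the size condition of Theorem~\ref{ineg} for the maximum useful tiling size) or Theorem~\ref{edge th2} (in the second range, where $k'=3$ and $b'=2$; the 3-uniform $(k-3)$-link inherits density above $\binom{n'}{3}-\binom{n'-\alpha n'}{3}+\varepsilon(n')^3$ for $\alpha=1/(k+1)\le 1/7$ since $k\ge 7$). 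Each $Y_{k',b'}$-atom produced lifts, via the regularity setup and possibly an auxiliary edge of $H$, to a vertex-disjoint short $\ell$-path in $H$; iterating the extraction across many $(k-d)$-links covers all but $o(n)$ vertices.

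The main obstacle is Phase (iii), especially in the second range. A $Y_{3,2}$-atom in a $(k-3)$-link corresponds to a pair of $H$-edges sharing $k-1$ vertices, not the $\ell$ vertices of an $\ell$-path, so each atom must be augmented by one or more auxiliary $H$-edges on fresh vertices to become a genuine $\ell$-path of the intended cycle. Moreover, since Theorem~\ref{edge th2} covers at most $4\alpha n\le 4n/7$ vertices per application, the extraction must be iterated over many different $(k-3)$-links, and one must track auxiliary edges consistently, maintain vertex-disjointness of lifted pieces across iterations, and preserve the $\delta_{k-3}$-condition on the shrinking residual hypergraph throughout. This bookkeeping is the main technical content.
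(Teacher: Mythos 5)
Your skeleton is right: the lower bound is exactly the paper's space-barrier construction (Proposition~\ref{pr}), the upper bound uses the four-phase absorbing scheme (Lemmas~\ref{abs lem}, \ref{res lem}, \ref{con lem}, \ref{pat lem}), and you have correctly identified the link parameters $(k',b')=(k-d,\,2\ell-d)$ so that the numerical hypothesis $2k-2\ell \ge (2(2k-2\ell-d)^2+1)(k-d-1)+1$ is precisely what is needed to invoke Theorem~\ref{ineg} with tiling size $n/(2k-2\ell)$, and that in the second range $(k',b')=(3,2)$ with $\alpha=1/(k+1)\le 1/7$ matches Theorem~\ref{edge th2}.

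However, your description of Phase~(iii), and in particular the ``main obstacle'' you raise, does not match what is actually needed, and the difficulties you flag there are phantoms created by a misreading of how the reduction works. You describe an \emph{iterated} extraction of $Y_{3,2}$-atoms from many different $(k-3)$-links of $H$, each lifted to an $\ell$-path by adjoining ``auxiliary edges,'' with bookkeeping to keep the pieces disjoint and the residual degree condition alive. The paper does none of this. The passage from a $d$-degree hypothesis to an edge-density hypothesis is a \emph{one-shot} fractional LP-duality argument (Theorem~\ref{tran}): one takes a minimum fractional vertex cover $\omega$ of the auxiliary $(2k-2\ell)$-graph $H'$ whose edges are the vertex sets of copies of $Y_{k,2\ell}$ in $H$, singles out the $d$-set $L$ of vertices with smallest $\omega$-weight, and shows that the link $H'(L)$ would have a small fractional cover—contradicting the fact (supplied by Theorem~\ref{ineg} or \ref{edge th2} applied to $H(L)$) that $H'(L)$ has a large fractional matching. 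This gives directly a large fractional $Y_{k,2\ell}$-tiling of $H$ itself. Lemma~\ref{ftot} then converts it, via regularity, to an integer $Y_{k,2\ell}$-tiling of the \emph{reduced} $k$-graph; and Lemma~\ref{pat lem} (via Facts~\ref{f1} and~\ref{f222}) converts each $Y_{k,2\ell}$ of the reduced graph—a set of $2k-2\ell$ clusters—into a bounded family of long vertex-disjoint $\ell$-paths covering almost all vertices of those clusters. At no point is a single $Y_{3,2}$ (or $Y_{k,2\ell}$) of $H$ itself turned into an $\ell$-path, and no auxiliary edges or per-link iteration are needed. Moreover, the paper isolates all of this under the intermediate quantity $t(k,d,\ell)$, so the chain of inferences is cleanly $h_d^\ell(k)\le t(k,d,\ell)$ (Theorem~\ref{thm}) together with Proposition~\ref{pr} and Theorem~\ref{yth}; the modularity is what makes the proof manageable. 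You should replace your Phase~(iii) narrative and the ``main obstacle'' paragraph with the fractional $Y$-tiling route via Theorem~\ref{tran}, Lemma~\ref{ftot}, and the cluster-level path-building of Lemma~\ref{pat lem}.
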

	
	The proof of Theorem~\ref{thm1} relies on the results in Theorems~\ref{ineg} and~\ref{edge th2}.
	Indeed, under the popular framework of ``absorbing method'', one key step is to find a constant number of vertex-disjoint $\ell$-paths whose union covers almost all the vertices.
	To obtain a large path cover one usually uses the regularity method and reduces it to finding an almost perfect  $Y_{k,2\ell}$-tiling\footnote{It is possible to use $F$-tilings for other appropriate $k$-graph $F$ to produce the path cover.} in the reduced $k$-graph.
	In our proof, we further reduce the problem by fractional matching-covering argument to finding a large (fractional) $Y_{k-d,b-d}$-tiling of given size, so that we could apply Theorems~\ref{ineg} and~\ref{edge th2}.
	
	The other two important components of the absorbing method are the absorbing lemma and the connecting lemma.
	It has been observed in~\cite{Han2021Non} that the regularity approach used by K\"uhn, Mycroft and Osthus~\cite{Daniela2010Hamilton}, where they derived an absorbing lemma and a connecting lemma for $d=k-1$, can be adapted to prove a connecting lemma for $d\ge\ell+1$.
	Here we further note that the approach in~\cite{Daniela2010Hamilton} can actually establish the absorbing lemma for $d\ge \max\{k-\ell,\ell+1\}$.
	Therefore, when $\ell<k/2$ and $d\ge \max\{k-\ell,\ell+1\}=k-\ell$, it suffices to study the path cover problem via an almost perfect $Y_{k,2\ell}$-tiling.
	
	To formulate this, let us introduce the following notation.
	For any $ 0<\varepsilon<\eta $, $k\ge3$, $1\leq \ell<k/2$ and $ 1\le d \le k-1 $, let $ t_{d}^{\ell}(k,n,\varepsilon) $ denote
	the minimum $ t $ such that every $k$-graph $H$ of order $n$ with $\delta_d(H)\ge t$ contains a $Y_{k,2\ell}$-tiling covering all but at most $ \varepsilon n $ vertices of $ H $.  Let \[ t(k,d,\ell):=\limsup_{\varepsilon\to 0} \limsup_{n\to \infty} \frac{t_{d}^{\ell}(k,n,\varepsilon)}{\binom{n}{k-d}}.\] 
	We have the following result.
	
	\begin{thm}\label{thm} 
		For all $k\ge3$, $1\leq \ell<k/2$,  $k-\ell\le d \le k-1$, we have $h_{d}^{\ell}(k)\le t(k,d,\ell)$.
	\end{thm}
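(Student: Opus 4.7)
The plan is to employ the standard absorbing-method framework: produce a short absorbing $\ell$-path $P_0$, cover almost all remaining vertices by a constant number of long $\ell$-paths obtained from an almost-perfect $Y_{k,2\ell}$-tiling in the regularity reduced hypergraph, connect these together with $P_0$ to form a near-spanning $\ell$-cycle, and finally use the absorbing property of $P_0$ to swallow the leftover vertices.

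First, I would fix $\ell<k/2$ and $d\ge k-\ell$; since $\ell<k/2$ we have $d\ge k-\ell>\ell$, so $d\ge \max\{k-\ell,\ell+1\}$ automatically. This is exactly the range in which both the absorbing lemma and the connecting lemma (as extracted in \cite{Daniela2010Hamilton} and \cite{Han2021Non}) are known to hold. Given $\eta>0$, pick $0<\varepsilon\ll\eta$. Assume $\delta_{d}(H)\ge (t(k,d,\ell)+\eta)\binom{n}{k-d}$. An application of the absorbing lemma produces an $\ell$-path $P_0\subseteq H$ with $|V(P_0)|\le\varepsilon n$ whose two ends are ``good'' $\ell$-tuples and which, for every vertex set $U$ with $|U|\le\varepsilon^{2}n$ and $(k-\ell)\mid|U|$, can be extended to an $\ell$-path on $V(P_0)\cup U$ with the same ends.

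Next, set $H':=H-V(P_0)$; then $\delta_{d}(H')\ge (t(k,d,\ell)+\eta/2)\binom{|V(H')|}{k-d}$. Apply the $k$-graph regularity lemma to $H'$ to obtain a cluster partition and a reduced $k$-graph $R$ which inherits essentially the same minimum $d$-degree condition. By the definition of $t(k,d,\ell)$, $R$ contains a $Y_{k,2\ell}$-tiling covering all but at most $\varepsilon|V(R)|$ clusters. Each copy of $Y_{k,2\ell}$ in $R$ corresponds to a dense regular complex on $2k-2\ell$ clusters, inside which one can embed, via the standard ``regular restriction + greedy extension'' argument, an $\ell$-path of $H'$ covering all but a negligible fraction of vertices in those clusters. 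Using the connecting lemma, splice these constantly many long $\ell$-paths together with $P_0$ into a single $\ell$-cycle $C$ missing at most $\eta n/2$ vertices; since each connection consumes only $O(1)$ vertices and there are $O(1)$ paths, the loss is negligible. Finally, apply the absorbing property of $P_0$ to the set $U$ of uncovered vertices (with a tiny adjustment of one long path to force $(k-\ell)\mid|U|$) to obtain a Hamilton $\ell$-cycle.

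The main obstacle, and the only non-routine input, is the step that turns a $Y_{k,2\ell}$ in the reduced $k$-graph into a long $\ell$-path in $H'$. Here one must exploit that the two edges of $Y_{k,2\ell}$ share exactly $2\ell$ clusters: one builds an $\ell$-path of $H'$ whose consecutive edges alternately use one $k$-tuple of clusters then the other, and whose consecutive $\ell$-vertex intersections lie inside the $2\ell$-overlap. A short counting check shows that $2\ell$-overlap is precisely what is required for the consecutive-edge intersections to have size $\ell$ while each $k$-tuple of clusters is used for many edges, so the resulting $\ell$-path almost saturates the clusters. The remaining ingredients — the absorbing lemma, the connecting lemma, and the $(k-\ell)$-divisibility adjustment — are adaptations of the now-standard arguments in the cited works and present no further conceptual difficulty.
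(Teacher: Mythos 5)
Your high-level blueprint matches the paper's (absorbing path, regularity-based path cover via $Y_{k,2\ell}$-tiling, connecting, absorption), and your observation that $\ell<k/2$ and $d\ge k-\ell$ force $d\ge\max\{k-\ell,\ell+1\}$ is exactly the point the paper makes for the availability of the connecting and absorbing lemmas. However, two pieces of the assembly are glossed over in a way that would actually break the argument as written.

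First, you never set aside a \emph{reservoir}. The paper chooses a random set $R$ of size $\alpha n$ \emph{before} running the path cover, so that (by the reservoir lemma) every $d$-set retains degree $\ge (t+\eta/2)\binom{\alpha n}{k-d}$ into $R$; this is what makes the connecting lemma applicable to the (shrinking) residue $R_i$ at every one of the $O(1)$ connecting steps. In your proposal the connecting paths have to live inside whatever is left after the long paths are placed, but that leftover is an uncontrolled $O(\varepsilon n)$-vertex set with no guaranteed minimum $d$-degree, so the connecting lemma's hypothesis is not available there. Without the reservoir the splicing step is not justified.

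Second, you state the absorbing property as ``$P_0$ absorbs any vertex set $U$ with $(k-\ell)\mid|U|$ and $|U|\le\varepsilon^2 n$,'' but the absorbing path lemma only lets $P_0$ absorb a collection of at most $\alpha n$ \emph{disjoint $c$-good $(k-\ell)$-sets}. To absorb an arbitrary leftover $R''$, the paper first shows (using conclusion (1) of the absorbing lemma together with the reservoir lemma applied to the auxiliary $(k-\ell)$-graph $G$ of $c$-good sets) that $G[R'']$ has minimum vertex degree $\ge(1-2\eta)\binom{|R''|}{k-\ell-1}$, and then invokes Daykin--H\"aggkvist to extract a perfect matching of $G[R'']$, i.e.\ a partition of $R''$ into $c$-good $(k-\ell)$-sets. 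Your version skips this partitioning step entirely; without it, you cannot invoke the absorbing property. Notice also that this fix again uses the reservoir, so the two gaps are really one: you need to reserve a random set whose structure is pre-controlled both for connections in $H$ and for $c$-good density in $G$.

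A minor further remark: you propose ``a tiny adjustment of one long path'' to force $(k-\ell)\mid|U|$, but this is unnecessary here since $(k-\ell)\mid n$ is assumed (a necessary divisibility condition) and $(k-\ell)$ divides the order of every $\ell$-cycle, so the leftover automatically satisfies the divisibility requirement.
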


	Note that Theorem~\ref{thm} implies that $ h_{d}^{\ell}(k,n)\le(t(k,d,\ell)+\eta) \binom{n}{k-d}$.
	The following proposition, usually called space barriers, implies that $h_{d}^{\ell}(k)\ge1-(1-\frac{1}{2(k-\ell)})^{k-d}$.
	
	\begin{prop} \label{pr}
		For all $k\ge3$, $1\leq \ell<k/2$, $ 1\leq d\le k-1 $ and every $ n$ with $(k-\ell)\mid n$, there exists an $n$-vertex $ k $-graph $H_0$ with $\delta_d(H_0)\ge(1-(1-\frac{1}{2(k-\ell)})^{k-d}-o(1)) \binom{n}{k-d}$ containing no Hamilton $\ell$-cycle. In other words, $h_{d}^{\ell}(k)\ge1-(1-\frac{1}{2(k-\ell)})^{k-d}$.
	\end{prop}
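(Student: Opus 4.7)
The plan is to use a standard space-barrier construction. I would partition $V(H_0)=S\cup T$ with $|S|=s$ and $|T|=n-s$, for $s$ chosen just below $n/(2(k-\ell))$, and declare the edges of $H_0$ to be all $k$-subsets of $V(H_0)$ that meet $S$ (equivalently, $T$ is an independent set). This makes $S$ automatically a vertex cover of any spanning subhypergraph, so the obstruction to Hamiltonicity reduces to a vertex-cover-number argument on the target $\ell$-cycle.

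Next I would argue that $H_0$ contains no Hamilton $\ell$-cycle. A Hamilton $\ell$-cycle has exactly $n/(k-\ell)$ edges, and since $1\le\ell<k/2$ gives $\ell<k-\ell$, the $\ell$ vertices any edge shares with its predecessor are disjoint from the $\ell$ it shares with its successor, and two edges at cyclic distance at least $2$ cannot overlap at all. Hence each vertex of an $\ell$-cycle lies in at most two edges, so any vertex cover of it has size at least $n/(2(k-\ell))$. Taking $s=\lceil n/(2(k-\ell))\rceil-1$ therefore rules out a Hamilton $\ell$-cycle in $H_0$, while keeping $s$ as large as possible so that the minimum $d$-degree is maximized.

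For the degree bound, the minimum is clearly attained on $d$-sets $D\subseteq T$, for which
\[
\deg_{H_0}(D)=\binom{n-d}{k-d}-\binom{n-d-s}{k-d},
\]
since the only non-edges containing $D$ are the $(k-d)$-subsets of $T\setminus D$. A routine estimate with $s=n/(2(k-\ell))+O(1)$ then gives
\[
\delta_d(H_0)\ge\left(1-\left(1-\tfrac{1}{2(k-\ell)}\right)^{k-d}-o(1)\right)\binom{n}{k-d},
\]
as required. The only genuinely non-automatic input is the vertex-cover lower bound $n/(2(k-\ell))$ used in the second paragraph; everything else is book-keeping, and that lower bound is itself immediate from the hypothesis $\ell<k/2$.
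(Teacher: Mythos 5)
Your proposal is correct and matches the paper's proof essentially verbatim: both take the space barrier with a set $S$ (the paper's $A$) of size $\lceil n/(2(k-\ell))\rceil-1$ whose incident $k$-sets form all edges, both observe that each vertex of an $\ell$-cycle lies in at most two edges when $\ell<k/2$, and both conclude via the vertex-cover bound $|C|\le 2|A|<n/(k-\ell)$. You spell out the ``at most two edges'' fact in slightly more detail than the paper does, but there is no difference in substance.
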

	
	\begin{proof}
		Let $ t=n/(k-\ell) $.
		Let $H_0=(V,E)$ be an $ n $-vertex $ k $-graph in which $ V $ is partitioned into two sets $ A $ and $ B $ such that $ |A|=\lceil t/2\rceil-1 $ and $ E $ consists of all $ k $-sets that intersect $ A $. Thus 
		\[\delta_d(H_0)\ge\binom{n-d}{k-d}-\binom{n-|A|-d}{k-d}=(1-(1-\frac{1}{2(k-\ell)})^{k-d}-o(1)) \binom{n}{k-d}.\] 
		If $H_0$ contains a Hamilton $\ell$-cycle $ C $, then each vertex is contained in at most two edges of $ C $. 
		Since $ A $ is a vertex cover of $ C $,  we have $ |C|\le 2|A|<t $, which is a contradiction.
	\end{proof}
	
	In the proof above, if we decrease the size of $A$ by $o(n)$, then the size of a maximum $Y_{k,2\ell}$-tiling (which equals $|A|$) also decreases by the same amount, giving that $t(k,d,\ell)\ge1-(1-\frac{1}{2(k-\ell)})^{k-d}$.
	Known evidences help us conjecture that equalities should hold.
	\begin{conj}\label{conj2} 
		For all $k\ge3$, $1\leq \ell<k/2$,  $k-\ell\le d \le k-1$, we have $h_{d}^{\ell}(k)= t(k,d,\ell)=1-(1-\frac{1}{2(k-\ell)})^{k-d}$.
	\end{conj}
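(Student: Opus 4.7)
The conjecture combines three estimates, two of which are already in hand. Proposition~\ref{pr} gives $h_{d}^{\ell}(k)\ge 1-(1-\frac{1}{2(k-\ell)})^{k-d}$, and the remark following it supplies the matching lower bound for $t(k,d,\ell)$; Theorem~\ref{thm} gives $h_{d}^{\ell}(k)\le t(k,d,\ell)$. So the full conjecture reduces to the single upper bound
\[
t(k,d,\ell)\le 1-\left(1-\frac{1}{2(k-\ell)}\right)^{k-d},
\]
i.e.\ to showing that every $n$-vertex $k$-graph $H$ with $\delta_{d}(H)\ge (1-(1-\frac{1}{2(k-\ell)})^{k-d}+\varepsilon)\binom{n}{k-d}$ admits a $Y_{k,2\ell}$-tiling leaving only $o(n)$ vertices uncovered.

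My plan for this upper bound mirrors the strategy behind Theorem~\ref{thm1}. First, apply the weak hypergraph regularity lemma to $H$ to transfer the $d$-degree hypothesis to the cluster $k$-graph $R$ (up to an $o(1)$ loss), so that it suffices to produce an almost perfect $Y_{k,2\ell}$-tiling of $R$. Then perform the fractional matching-covering reduction indicated between Theorems~\ref{thm} and~\ref{thm1}: for each $d$-set $S$ the link $L_R(S)$ is a $(k-d)$-graph whose edge density matches exactly the Conjecture~\ref{conj1} threshold for $Y_{k-d,\,2\ell-d}$-tilings, and LP duality reduces the task of producing a near-perfect $Y_{k,2\ell}$-tiling of $R$ to producing linear-size $Y_{k-d,\,2\ell-d}$-tilings in a weighted collection of these links.

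The main obstacle, and in a sense the entire content of the conjecture beyond what is already proved, is therefore to verify Conjecture~\ref{conj1} for $(k',b')=(k-d,\,2\ell-d)$ in the full linear regime $s=\Theta(n)$: Theorem~\ref{ineg} handles only $s=o(n)$ and Theorem~\ref{edge th2} only $(k',b')=(3,2)$. A natural route is a stability-plus-absorption scheme layered on top of the Frankl--F\"uredi bound $\mathrm{ex}(n,Y_{k',b'})=o(n^{k'})$: show that any density-extremal $k'$-graph without a $Y_{k',b'}$-tiling of the forbidden size must be structurally close to $K_n^{(k')}\setminus K_{n-s}^{(k')}$, treat the near-extremal case by an explicit construction inside the two parts, and treat the far-from-extremal case by greedy extraction of $Y_{k',b'}$-copies using the density surplus.

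The hard part will be obtaining a \emph{quantitative} stability version of Frankl--F\"uredi, uniform in $k'$ and $b'$: the existing $\mathrm{ex}(n,Y_{k',b'})=o(n^{k'})$ bound is only qualitative, and converting it to a tiling analogue demands both supersaturation of $Y_{k',b'}$-copies in dense $k'$-graphs and a controlled interaction between the small ``heavy'' part and the large ``sparse'' part under iterative removal, so that the residual graph retains the density and degree conditions needed to continue. Once such a stability result and a compatible absorber gadget are in place, the regularity/LP framework above should push $t(k,d,\ell)\le 1-(1-\frac{1}{2(k-\ell)})^{k-d}$ through for all parameter ranges $k\ge3$, $1\le\ell<k/2$, $k-\ell\le d\le k-1$, and hence yield Conjecture~\ref{conj2}.
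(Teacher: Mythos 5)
This is labelled as a conjecture because the paper does not prove it; the sentence immediately following the conjecture statement records exactly the reduction you have reproduced. Your bookkeeping is correct and matches the paper's own: Proposition~\ref{pr} and the paragraph before the conjecture give the two lower bounds, Theorem~\ref{thm} gives $h_d^\ell(k)\le t(k,d,\ell)$, so the whole statement reduces to $t(k,d,\ell)\le 1-(1-\frac{1}{2(k-\ell)})^{k-d}$, which by Theorem~\ref{tran} together with Lemma~\ref{ftot} reduces in turn to an edge-density version of Conjecture~\ref{conj1} for $Y_{k-d,\,2\ell-d}$-tilings of size about $n/(2k-2\ell)$.

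Two small corrections to your account of what is already in hand. Theorem~\ref{ineg} does not only cover $s=o(n)$: it covers $s$ up to the linear fraction $n/\bigl((2(2k-b)^2+1)(k-1)+1\bigr)$, and the numerical hypothesis $2k-2\ell\ge (2(2k-2\ell-d)^2+1)(k-d-1)+1$ in Theorem~\ref{thm1} is precisely the condition under which this fraction already reaches the needed $n/(2k-2\ell)$. The remaining difficulty is therefore not a jump from sublinear to linear $s$ but the multiplicative constant in the linear range, which is too small exactly when $d$ is close to $2\ell$. Also, the cluster-graph machinery in the paper (Lemma~\ref{ftot}, Fact~\ref{f222}, Lemma~\ref{deg lem}) is built on the strong R\"odl--Schacht regularity lemma (Theorem~\ref{reglm}), not weak hypergraph regularity; the weak lemma would not support the counting-in-regular-complexes arguments that turn a fractional tiling into an integer one.

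The speculative part of your proposal --- a quantitative stability version of the Frankl--F\"uredi bound, supersaturation of $Y_{k',b'}$-copies, and an iterated greedy/absorption scheme --- is a reasonable research program for the genuinely open piece, but it is a plan rather than a proof: the supersaturation and stability ingredients you invoke do not currently exist in a form strong enough to run the argument, and you correctly flag this as the obstacle. Since the paper stops at the reduction and offers no strategy for the missing range of Conjecture~\ref{conj1}, there is nothing in the paper to compare your programmatic step against; the conjecture remains open.
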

	
	By Theorem~\ref{thm}, to verify Conjecture~\ref{conj2} it suffices to prove that $t(k,d,\ell)\le 1-(1-\frac{1}{2(k-\ell)})^{k-d}$, which, by our approach (the fractional matching-covering reduction presented later), would follow from the validation of a range in Conjecture~\ref{conj1} (see Theorem~\ref{tran}).

	
	The rest of this paper is organized as follows. 
	We use the absorbing method to prove Theorem~\ref{thm} in Section $2$. 
	The hypergraph regularity lemma is introduced in Section $3$, which is used to prove the absorbing path lemma and the path cover lemma in Appendix A and Section $4$ respectively. 
	In Sections $5$ and $6$, we prove Theorems~\ref{ineg} and~\ref{edge th2}, and use them to derive Theorem~\ref{thm1}.
	
	\section{Proof of Theorem~\ref{thm}}
	
	\subsection{Auxiliary lemmas}
	We build the Hamilton $\ell$-cycle by the absorbing method, popularized by R\"odl, Ruci\'nski, and Szemer\'edi in \cite{VOJTECH2006A}. More precisely, we divide the proof of Theorem~\ref{thm} into the following lemmas: the connecting lemma (Lemma~\ref{con lem}), the absorbing path lemma (Lemma~\ref{abs lem}), the path cover lemma (Lemma~\ref{pat lem}), and the reservoir lemma (Lemma~\ref{res lem}).
	
	The connecting lemma states that in any sufficiently large $ k $-graph with large minimum $ d $-degree, we can connect  any two disjoint ordered  $ \ell $-sets of vertices by a short $ \ell $-path.
	We remark that the proof of Lemma~\ref{con lem} in~\cite{Han2021Non} follows closely that of a connecting lemma in~\cite{Daniela2010Hamilton} with minor modifications. We write $x\ll y\ll z$ to mean that we can choose constants from right to left, that is, for any $z>0$, there exist functions $f$ and $g$ such that, whenever $y\leq f(z)$ and $x\leq g(y)$, the subsequent statement holds. Statements with more variables are defined similarly.
	
	\begin{lemma}[Connecting lemma \cite{Han2021Non}, Lemma 4.1]
		\label{con lem}
		Suppose that $k\ge3$ and $1\leq \ell<d \leq k-1$ such that $(k-\ell)\nmid k$, and that $1/n \ll \beta\ll \mu,1/k$.
		Let $H$ be a $k$-graph of order $n$ satisfying $\delta_d(H)\ge \mu \binom{n}{k-d}$. Then for any two disjoint ordered $\ell$-sets $S$ and $T$ of vertices of $H$, there exists an $\ell$-path $P$ in $H$	from $S$ to $T$ such that $P$ contains at most  $8k^{5}$ vertices.	
	\end{lemma}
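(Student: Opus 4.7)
The plan is to follow the template of the K\"uhn--Mycroft--Osthus connecting lemma from~\cite{Daniela2010Hamilton}, which was originally stated for $d=k-1$, and observe that the argument adapts to any $d$ with $\ell<d\le k-1$. The key new flexibility, which lets us treat $d<k-1$, is that a $d$-set containing a prescribed $\ell$-set has $d-\ell\ge 1$ free vertices; this multiplies the available edge-extensions far beyond what $\delta_d(H)$ alone guarantees. The divisibility hypothesis $(k-\ell)\nmid k$ plays no role in the connecting argument itself; it is inherited from the Hamilton-cycle application, where the ``divisible'' case has a different extremal structure.

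The first step is to prove an \emph{extension lemma}: for any $\ell$-set $L\subseteq V(H)$ and any ``forbidden'' set $F\subseteq V(H)$ with $|F|\le\gamma n$ (where $\gamma\ll\mu$), there are at least $c_0 n^{k-\ell}$ edges $e\supseteq L$ with $e\setminus L$ disjoint from $F$. To count them, pick a $(d-\ell)$-set $D\subseteq V(H)\setminus(L\cup F)$; there are $\Omega(n^{d-\ell})$ choices. For each, $\deg_H(L\cup D)\ge\mu\binom{n}{k-d}$ gives many completions, and only $O(\gamma n^{k-d})$ of them can meet $F$, so $\Omega(n^{k-d})$ survive. Summing over $D$ and dividing by the multiplicity $\binom{k-\ell}{d-\ell}$ (the number of $D$'s inside a fixed $e$) yields the claim.

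The second step is a reachability argument. Fix a constant $j$ to be chosen so that $j(k-\ell)$ stays safely below $4k^5$. For any ordered $\ell$-set $S$, grow $\ell$-paths of length $j$ from $S$ by iterating the extension lemma, updating $F$ at each step to contain every vertex used so far in the tentative path. A single extension from an $\ell$-set yields $\Omega(n^{k-\ell})$ new edges, which after regrouping by the ordered ``new end'' (a fixed pair of $\ell$-sets determines $2\ell$ vertices of the new edge, leaving $O(n^{k-2\ell})$ realizations) produces $\Omega(n^\ell)$ distinct new ordered $\ell$-set ends. A BFS-style iteration then gives $|\mathcal R_j(S)|\ge c_1 n^\ell$, where $\mathcal R_j(S)$ is the set of ordered $\ell$-sets reachable from $S$ along an $\ell$-path of length $j$ avoiding $F$; by taking $j$ slightly larger if necessary, one can push $c_1$ past $1/(2\ell!)$, so that $|\mathcal R_j(S)|+|\mathcal R_j(T)|$ exceeds the total number of ordered $\ell$-sets. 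The two reachability sets then share a common $\ell$-set $L$, and concatenating the $S$-to-$L$ path with the reverse of the $T$-to-$L$ path yields the required $\ell$-path on at most $2j(k-\ell)+2\ell\le 8k^5$ vertices once $j=O(k^4)$.

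The main technical obstacle is global vertex-disjointness: the two half-paths from $S$ and from $T$ to the common $\ell$-set $L$ must not reuse vertices. This is precisely what the forbidden-set parameter $F$ in the extension lemma is designed to control. At every stage one sets $F$ to be the union of all vertices committed to either half-path so far; since the total construction involves $O(k^5)=o(n)$ vertices, the bound $|F|\le\gamma n$ never fails, the extension counts remain valid at each iteration, and the union-bound choices at the common-$L$ step can be made to avoid the $O(k^5)$ already-used vertices. Handling this forbidden set coherently across both endpoints, while maintaining the lower bounds on $|\mathcal R_j(S)|$ and $|\mathcal R_j(T)|$ at each of the $j$ extension steps, is the main bookkeeping burden of the proof.
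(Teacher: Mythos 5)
The paper does not supply its own proof of this lemma; it cites it verbatim from~\cite{Han2021Non} (Lemma~4.1), whose proof, as the surrounding text explains, is a close adaptation of the regularity-method argument of K\"uhn--Mycroft--Osthus~\cite{Daniela2010Hamilton}: one applies the hypergraph regularity lemma, works in the reduced $k$-graph, and realizes the connecting path through a regular cluster structure. Your proposal takes an entirely different and more elementary route (counting/BFS). That alternative is worth assessing on its own merits, and as written it has genuine gaps.

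The central problem is the jump from ``$|\mathcal R_j(S)|\ge c_1 n^\ell$'' to ``we can push $c_1$ past the majority threshold so that $\mathcal R_j(S)$ and $\mathcal R_j(T)$ must intersect.'' Iterating the extension step from each end of $\mathcal R_j(S)$ does produce $\Omega(n^{k-\ell})$ further edges, but nothing in your argument shows that these reach a \emph{strictly larger} density of ordered $\ell$-sets than before; nothing prevents the reachability set from saturating at a constant fraction bounded well below $1/2$. You would need a dedicated boosting argument, and obtaining density above $1/2$ from a minimum $d$-degree hypothesis is genuinely delicate when $d<2\ell$ (then one cannot even directly assert that $\deg(M\cup L')>0$ for two disjoint $\ell$-sets $M,L'$). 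In the same vein, the stated threshold $1/(2\ell!)$ does not match the claim: the total number of \emph{ordered} $\ell$-sets is $\sim n^\ell$, so you need density above $1/2$, not $1/(2\ell!)$; if you meant unordered sets, that clashes with the ordered-end bookkeeping you use earlier.

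Two further issues. First, the conclusion of the lemma bounds $|V(P)|$ by $8k^5$, a quantity independent of $\mu$. A BFS whose single-step gain is $\Omega(\mu)$ in density would naturally need $\Theta(\log(1/\mu))$ rounds, giving a path length depending on $\mu$; your sketch gives no mechanism for a $\mu$-free bound. Second, the regime $\ell>k/2$ (e.g.\ $k=5$, $\ell=3$, $d=4$, which satisfies all hypotheses of the lemma) is silently excluded: there the new end of the extended $\ell$-path overlaps the old end, the two given ends $S,T$ cannot live in a single $k$-edge, and your counting phrase ``a fixed pair of $\ell$-sets determines $2\ell$ vertices of the new edge, leaving $O(n^{k-2\ell})$ realizations'' ceases to make sense because $2\ell>k$. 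Your extension lemma (Step~1) is sound and correctly uses $d>\ell$, and the observation that $(k-\ell)\nmid k$ is inherited from the absorbing construction rather than needed for connecting is reasonable, but the reachability and length-control steps as stated do not close.
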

	
	Let $H$ be an $n$-vertex $k$-graph, and let $S$ be a set of $ k-\ell $ vertices of $H$. 
	An $ \ell $-path $P$ in $H$ with ordered ends $ P^{beg} $ and $ P^{end} $ \emph{absorbs $S$} if $P$ does not contain any vertex of $S$, and $H$ contains an $ \ell $-path $Q$ with the same ordered ends $ P^{beg} $ and $ P^{end} $, where $V(Q)=V(P)\cup S $.
	Note that the number of vertices we absorb each time must be a multiple of $k-\ell$, because the order of an  $ \ell $-path is always in the form of $\ell+r(k-\ell)$, where $r\in \mathbb{N}$ is the length (i.e. the number of edges) of the path. 
	We follow \cite{Daniela2010Hamilton} and use the absorbing path of order $b(k,\ell)$ with $b(k,\ell)\le k^{4}$, whose definition can be found in  Appendix A.  
	We say that a $( k-\ell) $-set $S$ of $V(H)$ is \emph{$c$-good} (otherwise \emph{c-bad}) if $H$ contains at least $cn^{b(k,\ell)}$ absorbing paths for $S$, each on  $b(k,\ell)$ vertices.
	
	\begin{lemma} [Absorbing path lemma]
		\label{abs lem} 
		Suppose $k\ge3$, $1\leq \ell,d \leq k-1$ such that $d \ge \max\{k-\ell,\ell+1\}$ and $(k-\ell)\nmid k$, and that $1/n \ll \alpha \ll c \ll \gamma  \ll\mu'\ll \mu,1/k$. 
		Let $H$ be a $k$-graph of order $n$ with $\delta_d(H)\ge \mu \binom{n}{k-d}$. Then $H$ contains an $\ell$-path $P$ on at most $\mu' n$ vertices such that the following properties hold:
		\begin{enumerate}
			\item \label{AbsLem:1} Every vertex of $H - V(P)$ lies in at most  $\gamma n^{k-\ell-1}$ $c$-bad $(k-\ell)$-sets, 
			\item \label{AbsLem:2} $P$ can absorb any collection of at most $\alpha n$ disjoint $c$-good $(k-\ell)$-sets of vertices of $H-V (P)$.
		\end{enumerate}
	\end{lemma}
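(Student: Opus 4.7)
The plan is to follow the absorbing-method framework of Rödl--Ruciński--Szemerédi, adapted for $\ell$-paths by Kühn--Mycroft--Osthus and further refined in \cite{Han2021Non}. The argument proceeds in three stages.

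\emph{Stage 1 (building and counting absorbers).} I would first construct an explicit absorbing gadget: for every $(k-\ell)$-set $S$, an $\ell$-path on $b(k,\ell)\le k^4$ vertices with prescribed ordered ends $P^{beg},P^{end}$ such that some interior block can be replaced to include the vertices of $S$. The hypotheses $d\ge k-\ell$ and $d\ge \ell+1$ are what make this gadget exist for an arbitrary $S$: the former lets one insert $S$ through common $d$-neighborhoods, and the latter allows one to freely prescribe endpoints. Using the minimum $d$-degree $\mu\binom{n}{k-d}$ and a greedy extension, every $(k-\ell)$-set $S$ has at least $c' n^{b(k,\ell)}$ absorbing paths for some $c' = c'(\mu,k,\ell) > 0$. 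A double-counting argument then shows that the number of $c$-bad $(k-\ell)$-sets is $o(n^{k-\ell})$ once $c\ll c'$, and hence all but at most $\gamma n/2$ vertices lie in at most $\gamma n^{k-\ell-1}$ $c$-bad sets; call the exceptional vertices \emph{super-bad}.

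\emph{Stage 2 (probabilistic selection).} Let $\mathcal{A}$ be the family of all absorbing paths on $b(k,\ell)$ vertices. Sample $\mathcal{F}\subseteq \mathcal{A}$ by including each path independently with probability $p=\Theta(n^{1-b(k,\ell)})$, with constant chosen so that $\mathbb{E}|\mathcal{F}|=\Theta(n)$. By Markov's and Chernoff's inequalities one obtains simultaneously, with positive probability: $|\mathcal{F}|\le \mu' n/(4b(k,\ell))$; for every $c$-good $(k-\ell)$-set $S$, at least $3\alpha n$ members of $\mathcal{F}$ absorb $S$; and the number of intersecting pairs in $\mathcal{F}$ is at most $\alpha n$. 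Deleting one path from each intersecting pair yields a pairwise disjoint family $\mathcal{F}'$ of absorbing paths that still absorbs each $c$-good set via $\ge 2\alpha n$ members.

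\emph{Stage 3 (connecting into a single $\ell$-path).} Since $d\ge \ell+1>\ell$ and $(k-\ell)\nmid k$, the Connecting Lemma (Lemma~\ref{con lem}) applies. I would order the paths of $\mathcal{F}'$ together with the (at most $\gamma n/2$) super-bad vertices, arranged into singletons to be swallowed along the way, and iteratively use Lemma~\ref{con lem} inside the sub-$k$-graph on the as-yet-unused vertices (whose minimum $d$-degree is still at least $(\mu-o(1))\binom{n}{k-d}$) to join consecutive endpoints by bridging $\ell$-paths of at most $8k^5$ vertices. The resulting single $\ell$-path $P$ has at most $|\mathcal{F}'|\,b(k,\ell)+|\mathcal{F}'|\cdot 8k^5+\gamma n\le \mu' n$ vertices by the hierarchy $\alpha\ll c\ll \gamma\ll \mu'$. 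Property~(\ref{AbsLem:2}) follows by greedily replacing one absorbing sub-path at a time (we use at most $\alpha n\le 2\alpha n$ of the absorbers available for each good set), and property~(\ref{AbsLem:1}) holds because every super-bad vertex has been absorbed into $V(P)$.

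The main obstacle is Stage~1: producing the absorbing gadget of bounded length $b(k,\ell)$ for an \emph{arbitrary} $(k-\ell)$-set $S$ in the regime $1\le \ell<d\le k-1$, $(k-\ell)\nmid k$. This is exactly where the constraints $d\ge k-\ell$ (for insertion) and $d\ge \ell+1$ (for free endpoints) enter, explaining the hypothesis $d\ge \max\{k-\ell,\ell+1\}$. Once this combinatorial gadget is in hand, the probabilistic selection and iterated connecting are standard and closely parallel the arguments of~\cite{Daniela2010Hamilton,Han2021Non}.
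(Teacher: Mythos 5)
Your overall scheme—count absorbers, select a random family, prune intersections, connect with Lemma~\ref{con lem}—matches the paper's; Stages~2 and~3 are essentially what the paper invokes by pointing to \cite[Lemma~6.3]{Daniela2010Hamilton} verbatim. But Stage~1 has a genuine gap, and it is exactly the step the paper handles with heavy machinery.

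You assert that ``using the minimum $d$-degree $\mu\binom{n}{k-d}$ and a greedy extension, every $(k-\ell)$-set $S$ has at least $c'n^{b(k,\ell)}$ absorbing paths.'' This is unjustified. The hypothesis only controls the degree of $d$-sets, and $\mu$ is an arbitrarily small fixed constant. The gadget $\mathcal{AP}(k,\ell)$ of Proposition~\ref{Prop1} is not simply an $\ell$-path: it contains \emph{two} interleaved $\ell$-paths $P$ (on $X$) and $Q$ (on $S\cup X$) with common ends, so some pairs of required edges intersect in more than $d$ vertices. When you try to place a new edge of the gadget that overlaps the already-embedded part in a $j$-set with $j>d$, a minimum $d$-degree condition gives you no control whatsoever—there is no analogue of ``extend a $d$-set'' available, and for small $\mu$ the neighborhood structures need not line up. Indeed, the paper's Lemma~\ref{lem cbad} proves only that at most $\gamma n^{k-\ell}$ sets are $c$-bad, \emph{not} that none are; the $\gamma n^{k-\ell}$ slack is precisely the price of the regularity argument (sets that do not fall into a regular polyad, or whose clusters do not span an edge of the reduced $k$-graph $R$). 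If your greedy claim were correct, no set at all would be $c$-bad for $c<c'$, your subsequent double-counting would be vacuous, and property~(\ref{AbsLem:1}) would be trivial—none of which matches the actual state of affairs. To fill the gap you need the route the paper takes: apply Theorem~\ref{reglm} to obtain a family of partitions and the reduced $k$-graph $R$, check via Lemma~\ref{deg lem} and Claim~\ref{claim2} that almost every $(k-\ell)$-set sits in clusters spanning an edge of $R$, and then invoke the Extension Lemma (Lemma~\ref{exten lem}) in the associated regular $(k,k)$-complex to embed $\mathcal{AP}(k,\ell)$ around almost every such set. Once you have Lemma~\ref{lem cbad}, the probabilistic selection and iterated connecting you describe go through as in \cite{Daniela2010Hamilton}.
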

	
	The following lemma states that any sufficiently large $ k $-graph satisfying the minimum degree condition can be almost covered by a constant number of vertex-disjoint $ \ell $-paths.
	
	\begin{lemma} [Path cover lemma]
		\label{pat lem}
		Suppose that $k\ge3$,   $1\leq \ell<k/2$,  $1\leq d\leq k-1$, and  $1/n \ll 1/D\ll \varepsilon \ll\mu,1/k$. Let $H$ be a $k$-graph of order $n$ with $\delta_d(H)\ge (t(k,d,\ell)+\mu) \binom{n}{k-d}$.
		Then $H$ contains a set of at most $D$ disjoint $\ell$-paths covering all but at most $4\varepsilon n$ vertices of $H$.
	\end{lemma}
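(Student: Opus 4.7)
The plan is to invoke hypergraph regularity to reduce the problem to finding an almost-perfect $Y_{k,2\ell}$-tiling in a constant-size reduced $k$-graph, apply the definition of $t(k,d,\ell)$ to produce such a tiling, and lift each tile to a long $\ell$-path in $H$ via the regular $k$-tuples. Since $\ell<k/2$ only loose structures are built, so the weak hypergraph regularity lemma suffices.

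First, apply the weak regularity lemma with parameters $\varepsilon_{\mathrm{reg}} \ll \varepsilon$ and density threshold $\delta^\star \ll \mu$, obtaining a partition $V_0 \cup V_1 \cup \cdots \cup V_m$ with $|V_0| \le \varepsilon_{\mathrm{reg}} n$, equal clusters of size $N$, and $m \le M_0 = M_0(\varepsilon_{\mathrm{reg}}, k)$; then set $D \ge \lceil M_0/(2(k-\ell)) \rceil$, which is compatible with the hierarchy $1/D \ll \varepsilon$ since $\varepsilon_{\mathrm{reg}}$ (and hence $M_0$) is chosen in terms of $\varepsilon$. Let $R$ be the reduced $k$-graph on $[m]$ whose edges correspond to $\varepsilon_{\mathrm{reg}}$-regular $k$-tuples of density at least $\delta^\star$. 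A routine degree-inheritance calculation (bounding the $d$-subsets incident to too few regular-and-dense $(k-d)$-extensions and discarding them) yields $\delta_d(R) \ge (t(k,d,\ell) + \mu/2)\binom{m}{k-d}$. Invoking the definition of $t(k,d,\ell)$ on $R$ (whose order $m$ is as large as needed by making $\varepsilon_{\mathrm{reg}}$ small), we obtain a $Y_{k,2\ell}$-tiling $\mathcal F$ of $R$ covering all but at most $\varepsilon m$ vertices, with $|\mathcal F| \le m/(2(k-\ell)) \le D$.

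The remaining step is to lift each $F \in \mathcal F$ to an $\ell$-path of $H$ spanning the $2(k-\ell)$ clusters of $F$, leaving only an $O(\varepsilon_{\mathrm{reg}})$-fraction per cluster uncovered. Since $Y_{k,2\ell}$ is structurally a $2$-edge $\ell$-cycle, splitting its $2\ell$ shared clusters into two $\ell$-sized halves $A_1, A_2$ lets us alternate path edges between the two regular $k$-tuples of $F$, using $A_1$ and $A_2$ alternately as the shared $\ell$-block between consecutive path edges. A standard greedy regular-extension argument then produces such a long $\ell$-path, as in all loose-Hamilton-cycle proofs. Combining these paths across all $F \in \mathcal F$, the number of uncovered vertices is at most $|V_0| + \varepsilon m N + O(\varepsilon_{\mathrm{reg}} n) \le 4\varepsilon n$ by our parameter hierarchy, while the path count is $|\mathcal F| \le D$. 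The main technical obstacle is this lifting: the $2\ell$ shared clusters are traversed twice as often as the $2(k-2\ell)$ outer clusters, so balancing the walk to leave only a regularity-scale fraction of each cluster unused requires care, either via an Eulerian-style construction or a tight greedy embedding argument; the regularity bookkeeping and the degree-inheritance calculation are routine by comparison.
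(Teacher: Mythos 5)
Your overall strategy matches the paper's: apply hypergraph regularity, pass to the reduced $k$-graph $R$, find an almost-perfect $Y_{k,2\ell}$-tiling in $R$ via the definition of $t(k,d,\ell)$, and lift each tile to long $\ell$-paths. You also correctly identify the main technical obstacle in the lifting step --- the $2\ell$ shared clusters of each $Y_{k,2\ell}$ must be consumed at the right rate --- which is precisely what the paper's Facts 4.2 and 4.3 are designed to handle. Your observation that weak regularity should suffice for loose ($\ell<k/2$) structures is a legitimate simplification, though the paper sticks with the R\"odl--Schacht framework throughout for uniformity with its other lemmas.

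However, there is a genuine gap in the degree-inheritance step. You assert that a ``routine degree-inheritance calculation (bounding the $d$-subsets incident to too few regular-and-dense $(k-d)$-extensions and discarding them) yields $\delta_d(R) \ge (t(k,d,\ell) + \mu/2)\binom{m}{k-d}$.'' This is false as stated: the reduced $k$-graph does not inherit the \emph{minimum} $d$-degree; it only satisfies the weaker property that \emph{almost all} $d$-sets of clusters have high degree (this is exactly what the paper's Lemma~3.2 says). The bad $d$-sets cannot simply be ``discarded'' --- they are not attached to a small set of clusters, so deleting a few clusters will not eliminate them, and one cannot redefine minimum degree over a subfamily of $d$-sets when invoking the definition of $t(k,d,\ell)$. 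The paper resolves this with a nontrivial extra step: it chooses a constant $Q$, randomly partitions the clusters into groups of size $Q$, and uses a concentration result (Lemma~4.1, from Ferber et al.) to show that all but a tiny fraction of the groups inherit the full minimum $d$-degree condition $\delta_d \ge (t(k,d,\ell)+\mu/4)\binom{Q}{k-d}$; one then applies the definition of $t(k,d,\ell)$ group-by-group and unions the resulting tilings. Without this random-partition (or an equivalent) device, your proof does not go through: you cannot legitimately invoke the definition of $t(k,d,\ell)$ on $R$ itself.
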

	
	We also need the following two results in the proof of Theorem~\ref{thm}. The first result  asserts that for $ 1\le d\le k-1 $, if $ H $ is a large $ k $-graph with minimum  $ d $-degree and  we choose $ R\subseteq V(H) $ uniformly at random, then with high probability all sets of $d$ vertices have a large degree in $R$, ensuring that we can use vertices of $ R $ to connect every two disjoint $ \ell $-sets.
	
	\begin{lemma} [Reservoir lemma \cite{Daniela2010Hamilton}, Lemma 8.1]
		\label{res lem}  
		Suppose that $k\ge2$,  $1 \le d \le k-1$, and  $1/n \ll \alpha ,\mu,1/k$. Let $H$ be a $k$-graph of order $n$ with $\delta_{d}(H) \ge \mu\binom{n}{k-d}$, and let $R$ be a subset of $V(H)$ of size $\alpha n$ chosen uniformly at random. Then the probability that
		$|N_{H}(S)\cap \binom{R}{k-d}|\ge \mu \binom{\alpha n}{k-d}-n^{k-d-1/3}$
		for every $S \in  \binom{V(H)}{d}$ is $1-o(1)$.
	\end{lemma}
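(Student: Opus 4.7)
The plan is to fix a $d$-set $S$, show that the random variable $X_S := |N_H(S)\cap \binom{R}{k-d}|$ is tightly concentrated around its expectation, and then take a union bound over the $\binom{n}{d}\le n^d$ choices of $S$.

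First I would compute the expectation. For each $T\in N_H(S)$ the probability that $T\subseteq R$ equals $\binom{\alpha n}{k-d}/\binom{n}{k-d}$, so by linearity
\[
\mathbb{E}[X_S] \;=\; |N_H(S)|\cdot\frac{\binom{\alpha n}{k-d}}{\binom{n}{k-d}} \;\ge\; \mu\binom{\alpha n}{k-d}.
\]
It therefore suffices to show that with probability $1-o(1)$ we have $X_S\ge \mathbb{E}[X_S]-n^{k-d-1/3}$ for every $S$.

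For the concentration step I would sample $R$ by a uniform random permutation $\pi$ of $V(H)$ and set $R$ to be the first $\alpha n$ vertices. Expose the values $\pi(1),\dots,\pi(\alpha n)$ one at a time, forming the Doob martingale $M_i=\mathbb{E}[X_S\mid \pi(1),\dots,\pi(i)]$. Swapping the identity of a single coordinate of $\pi$ changes $X_S$ by at most the number of $(k-d-1)$-sets in $N_H(S)$ containing a given vertex, which is at most $\binom{n}{k-d-1}\le n^{k-d-1}$. Azuma's inequality for this bounded-difference martingale of length $\alpha n$ then gives
\[
\Pr\!\left[\,|X_S-\mathbb{E}[X_S]|\ge t\,\right] \;\le\; 2\exp\!\left(-\frac{t^2}{2\alpha n\cdot n^{2(k-d-1)}}\right).
\]
Plugging in $t=n^{k-d-1/3}$ yields an exponent of order $-n^{1/3}/(2\alpha)$, so the failure probability for a single $S$ is $\exp(-\Omega(n^{1/3}))$.

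Finally I would union bound: the number of $d$-sets is at most $n^d$, while the per-set failure probability is super-polynomially small in $n$, so the total failure probability is $n^d\exp(-\Omega(n^{1/3}))=o(1)$, proving the lemma. The only mildly delicate point is the sampling-without-replacement aspect, which is cleanly handled by using a permutation-based Doob martingale rather than a naive independent-sampling Chernoff bound; the slack between the $n^{k-d-1/2}$ typical Azuma deviation and the $n^{k-d-1/3}$ tolerance leaves ample room for the union bound.
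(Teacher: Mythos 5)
Your proof is correct, and it is the standard argument; note that the paper does not prove Lemma~\ref{res lem} itself but cites it from K\"uhn, Mycroft and Osthus, where the same permutation-martingale (Azuma) plus union bound strategy is used. One cosmetic slip: the Lipschitz constant is the number of $(k-d)$-sets of $N_H(S)$ containing a given vertex (you wrote ``$(k-d-1)$-sets''), and a single swap affects both the incoming and outgoing vertex, giving $2\binom{n-1}{k-d-1}$ rather than $\binom{n}{k-d-1}$; neither change affects the exponent, which remains $\Omega(n^{1/3})$, so the union bound over $\binom{n}{d}\le n^d$ sets goes through exactly as you say.
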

	
		The second result (Daykin and H\"aggkvist \cite{Daykin1981Degrees}) gives a minimum vertex degree condition that guarantees the existence of a perfect matching in a uniform hypergraph.
	
	\begin{lemma}\cite{Daykin1981Degrees} \label{mat lem}
		Suppose that $k\ge2$ and $k\mid n$. Let $H$ be a $k$-graph of order $n$ with $\delta_1(H)\ge \frac{k-1}{k} \binom{n-1}{k-1}$. Then $H$ contains a perfect matching.	
	\end{lemma}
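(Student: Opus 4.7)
The plan is to argue by contradiction via a maximum matching / augmenting argument. Suppose $H$ on $n=km$ vertices satisfies $\delta_1(H) \geq \frac{k-1}{k}\binom{n-1}{k-1}$ but admits no perfect matching. Let $M$ be a matching of maximum size with $|M| < m$, and set $U := V(H) \setminus V(M)$, so $|U| \geq k$. The goal is to produce a matching of size $|M|+1$, contradicting maximality.

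The first observation is that $H[U]$ must be edgeless, since any edge inside $U$ could be appended to $M$. Consequently every edge of $H$ meeting $U$ also meets $V(M)$. The engine of the proof is a ``two-for-one swap'': pick any $k$ vertices $u_1, \dots, u_k \in U$ and any edge $e = \{v_1, \dots, v_k\} \in M$. The union $e \cup \{u_1, \dots, u_k\}$ has $2k$ vertices, and if it can be partitioned into two disjoint $k$-subsets $A, B \in E(H)$, then $M' := (M \setminus \{e\}) \cup \{A, B\}$ is a matching of size $|M|+1$, the desired contradiction.

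To locate such a partition I would double count. The degree hypothesis says that for each $u_i$, the number of $(k-1)$-subsets $T \subseteq V \setminus \{u_i\}$ with $T \cup \{u_i\} \notin E(H)$ is at most $\frac{1}{k}\binom{n-1}{k-1}$. Averaging over all $e \in M$ and all $k$-subsets $\{u_1,\dots,u_k\} \subseteq U$, one extracts a choice of $(e, U_0)$ for which the number of $k$-subsets of $e \cup U_0$ that form edges of $H$ exceeds $\tfrac12\binom{2k}{k}$; pigeonhole then yields a complementary pair $\{A, (e \cup U_0) \setminus A\}$ of edges, completing the swap.

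I expect the main obstacle to be sharpening the counting so that a strict majority of the $k$-subsets of a single $2k$-set are edges: the global density from $\delta_1$ does not transfer automatically to a fixed $2k$-set. A natural workaround is to choose $e \in M$ maximizing a weighted count of ``good'' $(k-1)$-subsets it supplies to $U$, effectively a Hall/defect style argument on the bipartite auxiliary graph with parts $U$ and $\{(k-1)\text{-subsets of }V(M)\}$, where edges record ``$T \cup \{u\} \in E(H)$''. As a fallback I would induct on $m$: use the degree condition to find one edge $e$ whose removal leaves $H - e$ still satisfying the (slightly weakened) hypothesis in relative terms, and then invoke the inductive statement on the smaller ground set.
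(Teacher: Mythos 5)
The paper does not supply a proof of this lemma; it is imported from Daykin and H\"aggkvist \cite{Daykin1981Degrees}, so there is no in-paper argument for you to match. Judging your sketch on its own terms: the two-for-one swap skeleton is a reasonable opening, and the observation that $H[U]$ is edgeless is correct, but the argument is genuinely gapped at exactly the step you flag yourself. The averaging claim --- that over all pairs $(e,U_0)$ with $e\in M$, $U_0\in\binom{U}{k}$, some $2k$-set $e\cup U_0$ contains a strict majority of its $k$-subsets as edges --- does not follow from $\delta_1(H)\geq\frac{k-1}{k}\binom{n-1}{k-1}$. In the double count $\sum_{A\in E(H)}|\{(e,U_0):A\subseteq e\cup U_0\}|$, an edge $A$ contributes nothing unless $A\cap V(M)$ lies inside a single edge of $M$; but once $|M|$ is of moderate size, the vast majority of $(k-1)$-sets meeting $V(M)$ straddle several edges of $M$, so most of the degree mass promised by the hypothesis is invisible to your count. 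The hypothesis controls global links, not the local density of edges inside $2k$-sets of this special shape, and nothing in the sketch bridges the two.

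The fallback induction fares no better as described. Deleting a $k$-set $e$ from $H$ can reduce the degree of a surviving vertex $v$ by as much as $\binom{n-1}{k-1}-\binom{n-k-1}{k-1}$, so the best uniform guarantee is
\[
\deg_{H-e}(v)\;\geq\;\frac{k-1}{k}\binom{n-1}{k-1}-\left[\binom{n-1}{k-1}-\binom{n-k-1}{k-1}\right]\;=\;\binom{n-k-1}{k-1}-\frac{1}{k}\binom{n-1}{k-1},
\]
which is strictly below the required $\frac{k-1}{k}\binom{n-k-1}{k-1}$, since that would force $\binom{n-k-1}{k-1}\geq\binom{n-1}{k-1}$. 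Choosing $e$ by averaging does not repair this, because the degree condition must hold simultaneously for every remaining vertex and averaging only controls the typical vertex, not the worst one; you would have to exhibit a single $e$ that is light for all $v$ at once, and nothing in the sketch does that. So both routes you propose are missing a real idea, not merely polish; closing the gap would require either an augmenting structure richer than a single $2k$-set or a fundamentally different counting scheme, as in the cited source.
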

	
	Assuming these lemmas, there is a routine scheme of the absorbing method to prove Theorem~\ref{thm}.
	In our specific case, the proof follows closely the argument in~\cite{Daniela2010Hamilton}.
	
	\subsection{Proof of Theorem~\ref{thm}}
	Suppose we have the constants satisfying the following hierarchy
	\[1/n\ll 1/D \ll \varepsilon \ll \alpha \ll c  \ll \gamma \ll \gamma'\ll \eta \ll 1/k\]
	and in particular assume that $n$ is a multiple of $k-\ell$.
	 Let $H$ be a $k$-graph on $n$ vertices such that $\delta_d(H)\ge (t(k,d,\ell)+\eta) \binom{n}{k-d}$. 
	 A Hamilton $\ell$-cycle of $H$ will be constructed via the following steps.
	
	\textbf{Build an absorbing path.} 
	Let $P_{0}$ be an absorbing $\ell$-path returned by Lemma~\ref{abs lem}. 
	Then in particular, $|V(P_0)|\le\gamma'n$ and $P_{0}$ can absorb any set of at most $\alpha n $ $c$-good $(k-\ell)$-sets. 
	Define the auxiliary $(k-\ell)$-graph $G$ on $V(H)$ such that $E(G)$ consists of all $c$-good $(k-\ell)$-sets of $V(H)$. 
By \eqref{AbsLem:1} of Lemma~\ref{abs lem}, we infer that $\deg_{G}(v) \ge \binom{n-1}{k-\ell-1} -\gamma n^{k-\ell-1} \ge (1-\gamma')\binom{n}{k-\ell-1}$ for every $v \in V(G) \setminus V (P_{0})$.
	
	\textbf{Choose a reservoir set.} 
	We choose a set of $\alpha n$ vertices uniformly at random from $H$, denoted by $R$.
Applying Lemma~\ref{res lem} to $H$ and $G$, we obtain that with probability $1-o(1)$, 
	
	\[\left |N_{G}(v)\cap  \binom{R}{k-\ell-1}\right| \ge (1-2\gamma')\binom{\alpha n}{k-\ell-1}\]
	for every $v \in V(G) \setminus V (P_{0})$ and 
	\[\left |N_{H}(S)\cap  \binom{R}{k-d}\right| \ge (t(k,d,\ell)+\eta/2)\binom{\alpha n}{k-d}\]
	for every $S \in \binom{V(H)}{d}$. 
	Since $\mathbb{E}[|R \cap V(P_{0})|] = \alpha|V(P_{0})|$, by Markov's inequality, with probability at least $1/2$,  we have $|R \cap V(P_{0})| \le 2\gamma'\alpha n$. 
	Then we fix a choice of $R$ which has all three properties above. 
	Let $R' = R \setminus V(P_{0})$, and thus $|R'| \ge (1 - 2\gamma')\alpha n$. 
	Then for every $S \in \binom{V(H)}{d}$, we have
	\[ \left | N_{H}(S)\cap  \binom{R'}{k-d} \right |\ge (t(k,d,\ell)+\eta/2)\binom{\alpha n}{k-d}-2\gamma'\alpha n\binom{n}{k-d-1}\ge t(k,d,\ell)\binom{|R'|}{k-d}.\]  
	Similarly, $|N_{G}(v)\cap  \binom{R'}{k-\ell-1}| \ge (1-\eta)\binom{|R'|}{k-\ell-1}$ for every vertex $v$ in $V(G)\setminus V(P_0)$.
	
	\textbf{Cover the majority of vertices by vertex-disjoint $\ell$-paths.} 
	Let $V':=V(H)\setminus (R \cup V(P_{0}))$, and let $H':=H[V']$ be the restriction of $H$ on $V'$. 
	As $|R \cup V(P_{0})|\le2\gamma'n$, we have $\delta_d(H')\ge (t(k,d,\ell)+\eta/2) \binom{|H'|}{k-d}$. Applying Lemma~\ref{pat lem} to $H'$, we obtain a collection of at most $D$ vertex-disjoint $\ell$-paths $P_{1},\dots,P_{q}$ 
	covering all but at most $\varepsilon n$ vertices of $H'$.
	Let $X$ denote the set of uncovered vertices. Thus $|X| \le \varepsilon n$.
	
	\textbf{Connect up all $\ell$-paths.}   
	Denote by $P^{beg}_{i}$ and $P^{end}_{i}$ the ordered ends of $P_{i}$, $0 \le i \le q$. 
	For $0 \le i \le q$, we now find disjoint $\ell$-paths $P'_{i}$ by Lemma~\ref{con lem} to connect $P^{end}_{i}$
	and  $P^{beg}_{i+1}$ (where subindices are taken modulo $q+1$), which will connect $P_{i}$ and $P_{i+1}$. Note that  $V(P'_{i})\subseteq R' \cup P^{end}_{i} \cup P^{beg}_{i+1}$ and $|V(P'_{i})|\le 8k^{5}$.
	More precisely, suppose that we have chosen such $\ell$-paths  $P'_{0},\dots,P'_{i-1}$. Let $R_{i}=(R' \cup P^{end}_{i} \cup P^{beg}_{i+1})\setminus \bigcup_{j=0}^{i-1}V(P'_{j})$.  
	Thus 
	\[\delta_d(H[R_{i}])\ge ( t(k,d,\ell) +\eta/2)\binom{\alpha n}{k-d}-8k^{5}(D+\gamma' n ) \binom{\alpha n+2\ell}{k-d-1}\ge \frac{t(k,d,\ell)}{2}\binom{\alpha n}{k-d}\]
	and thus we may apply Lemma~\ref{con lem} to find a desired $\ell$-path  $P'_{i}$.
	
	\textbf{Absorb the leftover.} Let $C = P_{0}P'_{0}P_{1}P'_{1}\cdots P_{q}P'_{q}$ be the $\ell$-cycle we have obtained so far.  Let $R'' := V(H)\setminus V(C)$. Then indeed $ R''=X\cup(R'\setminus\bigcup_{1\le i\le q}V(P_i'))  $ and in particular, $(1- 3\gamma')\alpha n \le|R''| \le (\alpha+\varepsilon)n$. Because $k-\ell$ divides both $ n $ and $|V(C)|$, we get $(k-\ell) \mid|R''|$. 
	Moreover,  for each $v \in R''$, we have $|N_{G[R'']}(v)| \ge (1-2\eta)\binom{| R''|}{k-\ell-1}$.
	Since $(k-\ell)\nmid k$, we get $k-\ell \ge 2$. 
	By Lemma~\ref{mat lem}, the graph $G[R'']$ contains a perfect matching which allows us to partition the set $R''$ into (at most $\alpha n$) $c$-good $(k-\ell )$-sets. 
	Utilizing the absorbing property of $ P_0 $, there exists an $\ell$-path $Q_{0}$ with  $V(Q_{0})= V(P_{0}) \cup R''$ such that  $P_0$ and $Q_{0}$ have the same ordered ends. Thus a Hamilton $\ell$-cycle  $C' = Q_{0}P'_{0}P_{1}P'_{1}\cdots P_{q}P'_{q}$ in $H$ is obtained.
	
	\section{The hypergraph regularity method }
	
	We review the hypergraph regularity method, starting with some notation.
	\subsection{Regular complexes}
	A hypergraph $H$ is a \emph{complex} if whenever $e \in E(H)$ and $e'$ is a non-empty subset of $e$ we have that $e'\in E(H)$. All the complexes considered in this paper have the property that every vertex is contained in an edge. A complex $H$ is a \emph{$k$-complex} if all the edges of $H$ consist of at most $k$ vertices. The edges of size $i$ are called \emph{$i$-edges} of $H$. For convenience, we write $|H| := |V (H)|$ for the order of $H$. 
	Given a $k$-complex $H$, for each $i \in [k]$ we denote by $H_i$ the \emph{underlying $i$-graph} of $H$, where the vertices of $H_i$ are those of $H$ and the edges of $H_i$ are the $i$-edges of $H$.
	
	Note that a $k$-graph $H$ can be naturally turned into a $k$-complex, denoted by $H^{\leq}$, by replacing every edge into a complete $i$-graph $K^{(i)}_k$, for each $1 \leq i \leq k$. 
	Given $k \leq s$, a \emph{$(k, s)$-complex} $H$ is an $s$-partite $k$-complex, by which we mean that the vertex set of $H$ can be partitioned into sets $V_1, \dots , V_s$ such that every edge of $H$ meets each $V_i$ in at most one vertex.
	
	Now we motivate the notion of the relative density.
	For $i \geq 2$, suppose $H_i$ is an $i$-partite $i$-graph and $H_{i-1}$ is an $i$-partite $(i-1)$-graph, defined on the same vertex set (also under the same partition into $i$ parts). 
	We denote by $\mathcal{K}_i(H_{i-1})$ as the set of $i$-sets of vertices which form a copy of the complete $(i-1)$-graph $K^{(i-1)}_i$ on $i$ vertices in $H_{i-1}$. Then the \emph{density} of $H_i$ with respect to $H_{i-1}$ is defined to be
	\[ d(H_i|H_{i-1}):=\begin{cases}
		\frac{|\mathcal{K}_i(H_{i-1})\cap E(H_i)|}{|\mathcal{K}_i(H_{i-1})|} & \text{if $|\mathcal{K}_i(H_{i-1})|>0$,} \\
		0 & \text{otherwise.}
	\end{cases}
	\]
	More generally, if $\mathbf{Q} := (Q(1), Q(2),\dots, Q(r))$ is a collection of $r$ subhypergraphs of $H_{i-1}$, then we define $\mathcal{K}_i(\mathbf{Q}):= \bigcup^r_{j=1}\mathcal{K}_i(Q(j))$ and
	\[ d(H_i|\mathbf{Q}):=\begin{cases}
		\frac{|\mathcal{K}_i(\mathbf{Q})\cap E(H_i)|}{|\mathcal{K}_i(\mathbf{Q})|} & \text{if $|\mathcal{K}_i(\mathbf{Q})|>0$,} \\
		0 & \text{otherwise.}
	\end{cases}
	\]
	We say that $H_i$ is \emph{$(d_i, \delta, r)$-regular with respect to $H_{i-1}$} if for all $r$-tuples $\mathbf{Q}$ with $|\mathcal{K}_i(\mathbf{Q})| > \delta|\mathcal{K}_i(H_{i-1})|$ it holds that $d(H_i|\mathbf{Q}) = d_i \pm \delta$. Instead of $(d_i, \delta, 1)$-regularity we simply refer to $(d_i, \delta)$-\emph{regularity}.
	
	Given $3 \leq k \leq s$ and a $(k, s)$-complex $H$, we say that $H$ is $(d_k, \dots , d_2, \delta_k, \delta, r)$-regular if the following conditions hold:\vspace{3mm}
	
	\begin{itemize}
		\item For every $i = 2, \dots , k-1$ and for every $i$-tuple $K$ of vertex classes either $H_i[K]$ is $(d_i, \delta)$-regular with respect to $H_{i-1}[K]$ or $d(H_i[K]|H_{i-1}[K]) = 0$.
		\item For every $k$-tuple $K$ of vertex classes either $H_k[K]$ is $(d_k, \delta_k, r)$-regular with respect to $H_{k-1}[K]$ or $d(H_k[K]|H_{k-1}[K]) = 0$.
	\end{itemize}
	\vspace{3mm}
	Here we write $H_i[K]$ for the restriction of $H_i$ to the union of all vertex classes in $K$. We sometimes denote $(d_k, \dots , d_2)$ by $\mathbf{d}$ and simply refer to $(\mathbf{d}, \delta_k, \delta, r)$-\emph{regularity}.
	

	\subsection{Statement of the regularity lemma}
	In this section we state the version of the regularity lemma for $k$-graphs due to R\"{o}dl and Schacht, which we will use to prove our connecting lemma. To prepare for this we again have to introduce some notation. Suppose that $V$ is a finite set of vertices and $\mathcal{P}^{(1)}$ is a partition of $V$ into sets $V_1, \dots , V_{a_1}$, called \emph{clusters}. Given $k \geq 3$ and any $j \in [k]$, we denote by $\text{Cross}_j = \text{Cross}_j (\mathcal{P}^{(1)})$, the set of all those $j$-subsets $J$ of $V$ such that $|J\cap V_i| \leq 1$ for all $1\leq i \leq a_1$. For every set $A \subseteq [a_1]$ with $2 \leq |A| \leq k-1$ we write $\text{Cross}_A$ for all those $|A|$-subsets of $V$ that meet each $V_i$ with $i \in A$. Let $\mathcal{P}_A$ be a partition of $\text{Cross}_A$. The partition classes of $\mathcal{P}_A$ is called \emph{cells}. For $i = 2, \dots , k-1$, let $\mathcal{P}^{(i)}$ be the union of all $\mathcal{P}_A$ with $|A| = i$. Thus $\mathcal{P}^{(i)}$ is a partition of $\text{Cross}_i$.
	
	Let $\mathcal{P}(k-1) = \{\mathcal{P}^{(1)}, \dots ,\mathcal{P}^{(k-1)}\}$ be a family of partitions on $V$ which satisfies the following condition. Recall that $a_1$ denotes the number of clusters in $\mathcal{P}^{(1)}$. Consider any $B \subseteq A \subseteq [a_1]$ such that $2 \leq |B| < |A| \leq k-1$ and suppose that $S, T \in \text{Cross}_A $ lie in the same cell of $\mathcal{P}_{A}$. Let $S_B := S \cap \bigcup_{i\in B} V_i$ and define $T_B$ similarly. Then $S_B$ and $T_B$ lie in the same cell of $\mathcal{P}_{B}$.
	
	Given $1 \leq i \leq j \leq k$ with $i < k$, $J \in \text{Cross}_j$ and an $i$-set $Q \subseteq J$, we write $C_Q$ for the set of all those $i$-sets in $\text{Cross}_i$ that lie in the same cell of $\mathcal{P}^{(i)}$ as $Q$. (In particular, if $i = 1$ then $C_Q$ is the cluster containing the unique element in $Q$.) The \emph{polyad} $\hat{P}^{(i)}(J)$ of $J$ is defined by $\hat{P}^{(i)}(J):=\bigcup_Q C_Q$, where the union is over all $i$-subsets $Q$ of $J$. That is, we can view $\hat{P}^{(i)}(J)$ as a $j$-partite $i$-graph (whose vertex classes are the clusters intersecting $J$). We let $\hat{P}^{(j-1)}(J)$ be the set consisting of all the $\hat{P}^{(j-1)}(J)$ for all $J\in \mathrm{Cross}_j$. So for each $K \in \text{Cross}_k$ we can view $\bigcup^{k-1}_{i=1}\hat{P}^{(i)}(K)$ as a $(k-1, k)$-complex.
	We say that the family of partitions $\mathcal{P}=\mathcal{P}(k-1)$ is \emph{$(\eta, \delta, t)$-equitable} if\vspace{3mm}
	
	\begin{itemize}
		\item there exists $\mathbf{d}= (d_{k-1}, \dots , d_2)$ such that $d_i \geq 1/t$ and $1/d_i \in \mathbb{N}$ for all $i = 2, \dots , k-1$,
		\item $\mathcal{P}^{(1)}$ is a partition of $V$ into $a_1$ clusters of equal size, where $1/\eta \leq a_1 \leq t$,
		\item for all $i = 2, \dots , k-1$, $\mathcal{P}^{(i)}$ is a partition of $\mathrm{Cross}_i$ into at most $t$ cells,
		\item for every $K \in \mathrm{Cross}_k$, the $(k-1, k)$-complex $\bigcup^{k-1}_{i=1}\hat{P}^{(i)}(K)$ is $(\mathbf{d}, \delta, \delta, 1)$-regular.
	\end{itemize}
	\vspace{3mm}
	Note that the last condition implies that for $i = 2, \dots , k-1$ the cells of $\mathcal{P}^{(i)}$ have almost equal size.
	
	Let $\delta_k > 0$ and $r \in \mathbb{N}$. Suppose that $H$ is a $k$-graph on $V$ and $\mathcal{P} = \mathcal{P}(k-1)$ is a family of partitions on $V$. Given a polyad $\hat{P}^{(k-1)} \in \hat{\mathcal{P}}^{(k-1)}$, we say that $H$ is \emph{$(\delta_k, r)$-regular with respect to $\hat{P}^{(k-1)}$} if $H$ is $(d, \delta_k, r)$-regular with respect to $\hat{P}^{(k-1)}$ for some $d$. We say that $H$ is \emph{$(\delta_k, r)$-regular with respect to $\mathcal{P}$} if
	\[
	\left|\bigcup\{\mathcal{K}_k(\hat{P}^{(k-1)}): \text{$H$ is not $(\delta_k, r)$-regular with respect} \text{ to } \hat{P}^{(k-1)} \in \hat{\mathcal{P}}^{(k-1)} \}\right| \leq \delta_k|V|^k.\]
	That is, at most a $\delta_k$-fraction of the $k$-subsets of $V$ form a $K_k^{(k-1)}$ that lies within a polyad with respect to which $H$ is not regular.
	
	Now we are ready to state the regularity lemma.
	
	\begin{thm}[Regularity lemma \cite{R2007Regular}, Theorem 17] 
		\label{reglm}
		Let $k\geq 3$ be a fixed integer. For all positive constants $\eta$ and $\delta_k$ and all functions $r:\mathbb{N} \rightarrow \mathbb{N}$ and $\delta:\mathbb{N} \rightarrow (0,1]$, there are integers $t$ and $n_0$ such that the following holds for all $n \geq n_0$ which are divisible by $t!$. Suppose that $H$ is a $k$-graph of order $n$. Then there exists a family of partitions $\mathcal{P}= \mathcal{P}(k-1)$ of the vertex set $V$ of $H$ such that
		\begin{enumerate}
			\item $\mathcal{P}$ is $(\eta, \delta(t), t)$-equitable and
			\item $H$ is $(\delta_k, r(t))$-regular with respect to $\mathcal{P}$.
		\end{enumerate}
	\end{thm}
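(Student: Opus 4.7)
The plan is to prove the regularity lemma by induction on $k$, starting from Szemer\'edi's regularity lemma for graphs in the base case $k=2$, and then running an iterative refinement procedure controlled by an \emph{index} (mean-square-density) function. For a family $\mathcal{P}=\mathcal{P}(k-1)$, define
\[
\mathrm{ind}(\mathcal{P}) \;=\; \sum_{\hat P^{(k-1)} \in \hat{\mathcal P}^{(k-1)}} \frac{|\mathcal{K}_k(\hat P^{(k-1)})|}{|V|^k} \, d\bigl(H \mid \hat P^{(k-1)}\bigr)^{2}.
\]
This quantity is trivially bounded by $1$ and is the analogue of Szemer\'edi's energy. The whole proof then reduces to the following \emph{defect inequality}: if $\mathcal{P}$ is equitable but $H$ is not $(\delta_k,r)$-regular with respect to $\mathcal{P}$, then there is an equitable refinement $\mathcal{P}'$ with $\mathrm{ind}(\mathcal{P}') \ge \mathrm{ind}(\mathcal{P}) + c(\delta_k)$ for some $c(\delta_k)>0$. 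Iterating at most $c(\delta_k)^{-1}$ times must then produce a regular family, and $t$ is bounded by the resulting iteration count.

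First I would set up the initial partition by taking an arbitrary equitable partition $\mathcal{P}_0^{(1)}$ of $V$ into $\lceil 1/\eta\rceil$ clusters, then applying the inductive $(k-1)$-uniform regularity lemma to each $(k-1)$-partite $j$-graph living on the crosses (for $j=2,\ldots,k-1$) to get partitions $\mathcal{P}_0^{(j)}$ that are $(\mathbf{d},\delta,\delta,1)$-regular on each polyad. The nontrivial step is the refinement: given a non-regular $\mathcal{P}_j$, collect all bad polyads $\hat P^{(k-1)}$ (those covering $>\delta_k$-fraction of $k$-crosses for which regularity fails) together with the $r(t)$-tuples $\mathbf Q$ witnessing irregularity. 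For each such witness I would split the constituent cells of $\mathcal{P}_j^{(k-1)}$ according to whether the $K_k^{(k-1)}$-extensions land inside or outside $\mathbf Q$, then apply the inductive regularity lemma once more to re-equitize the refined partition at every level $2,\ldots,k-1$. A standard Cauchy--Schwarz computation shows that each such split contributes at least $\delta_k^{O(1)}$ to $\mathrm{ind}$, hence $\mathcal{P}_{j+1}$ has index larger by a fixed positive amount.

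The main obstacle is precisely this re-equitization step: refining $\mathcal{P}^{(k-1)}$ in a way that witnesses irregularity at the top level can easily destroy the $(d_i,\delta)$-regularity at lower levels $i<k-1$, and the new cell sizes are only approximately uniform. To manage this one has to nest two loops: after each top-level refinement, run an inner loop applying the inductive hypothesis with parameters $\delta'=\delta(t')$ and $r'=r(t')$ sufficiently small relative to the current number of parts $t$, so that the lower levels become regular again while increasing the total number of cells by a bounded (tower-type) factor only. This is where the functional parameters $r,\delta:\mathbb{N}\to\mathbb{N}$ in the statement are essential, since the required regularity and rank at each iteration depend on the current partition count.

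Finally, combining the bounded number of outer iterations (at most $O(c(\delta_k)^{-1})$) with the bounded blow-up in each inner re-equitization yields an absolute bound $t$ on the number of cells, depending only on $\eta$, $\delta_k$, $r$, and $\delta$. The divisibility assumption $t! \mid n$ ensures the clusters and cells can be made of exactly equal size in the final output, and the resulting $\mathcal{P}$ is both $(\eta,\delta(t),t)$-equitable and yields $(\delta_k,r(t))$-regularity of $H$, as required.
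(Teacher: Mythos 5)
This theorem is not proved in the paper: it is quoted verbatim (as Theorem~17) from the R\"odl--Schacht paper cited as \cite{R2007Regular}, and the present authors use it as a black box. So there is no in-paper proof to compare your attempt against, and any self-contained proof you give is necessarily a ``genuinely different route'' in the sense that the paper simply does not take one.

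That said, your sketch does follow the broad outline of the actual R\"odl--Schacht argument (and of the closely related Gowers and Tao approaches): an energy/index functional that is bounded and monotone under refinement, a defect form of Cauchy--Schwarz giving a fixed index increment when regularity fails at the top level, an inner re-equitization loop invoking the $(k-1)$-level lemma, and the observation that the functional parameters $r(\cdot)$ and $\delta(\cdot)$ are indispensable precisely because the tolerances needed at lower levels depend on the current number of cells. Those are the right ideas. Where the sketch falls well short of a proof is in the two steps you yourself flag as nontrivial. First, the index increment in the hypergraph setting is not a ``standard Cauchy--Schwarz computation'': the irregularity witnesses are $r$-tuples $\mathbf{Q}$ of subhypergraphs of the polyad, not single vertex subsets, and extracting a quantitative mean-square increment from such witnesses while staying inside a partition of $\mathrm{Cross}_{k-1}$ is the technical core of \cite{R2007Regular}. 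Second, your index functional as written only measures the top-level densities $d(H\mid\hat P^{(k-1)})$; to make the inner re-equitization loop terminate one needs an energy that also controls the lower levels, since otherwise refining $\mathcal{P}^{(k-1)}$ to witness top-level irregularity could, after re-regularizing levels $2,\dots,k-2$, fail to produce a net monotone quantity. R\"odl and Schacht handle this with a hierarchy of index functionals and a carefully chosen order of quantifiers; a single scalar $\mathrm{ind}(\mathcal P)$ bounded by $1$ does not by itself give termination of the nested loop. So the plan is pointed in the right direction, but the two obstacles you name are exactly where the proof lives, and they are not dispatched by what is written here.
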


	\subsection{The reduced $ k $-graph}
	The reduced $k$-graph of a regular partition is an important auxiliary $k$-graph which has been used in many applications of the regularity method\footnote{Note also that there have been a few different notions of such auxiliary hypergraphs, depending on the contexts.}.
	Suppose that we have constants
	$ 1/n\ll1/r,\delta\ll\min\{\delta_{k},1/t\}\le \delta_{k},\eta\ll d'\ll\theta\ll\mu ,1/k $
	and a $ k $-graph $ H $ on $ V  $ of order $ n $. We may apply the regularity
	lemma to $ H $ to obtain a family of partitions $ \mathcal{P}=\{\mathcal{P}^{(1)},\dots,\mathcal{P}^{(k-1)}\} $ of $ V $. Then the \emph{reduced k-graph} $ R = R(H,\mathcal{P}, d') $ is defined as a $ k $-graph whose vertices are the clusters of $ H $, i.e. the parts of $ \mathcal{P}^{(1)}: V_1,V_2,\dots,V_{a_{1}} $.
	A $k$-tuple of clusters forms an edge of $R$ if there is \emph{some} polyad $ \hat{P}^{(k-1)} $ induced on these $ k $ clusters such that $ H $ is $ (d'',\delta_{k}, r) $-regular with respect to $  \hat{P}^{(k-1)}  $ for some $ d''\ge d' $. Suppose that $ e $ is an edge of $R$, then there is some polyad $ \hat{P}^{k-1}(J) $ (where $ J\in \text{Cross}_{k} $) induced by the $ k $ clusters corresponding to $e$ such that $ H $ is $ (d'',\delta_{k}, r) $-regular with respect to $  \hat{P}^{(k-1)}(J)  $ for some $ d''\ge d' $.  Let $H^{*}$ be the $(k,k)$-complex consisting of
	\[\label{eqcomplex}
	\bigcup\limits_{i=1}^{k-1}\hat{P}^{i}(J) \cup (E(H)\cap \mathcal{K}(\hat{P}^{k-1}(J))). \tag{*} 
	\] 
	Then $H^{*}$ is  $(\mathbf{d},\delta_{k},\delta,r)$-regular, where $ \mathbf{d}=(d'',d_{k-1},\dots,d_2) $, and $d_{k-1},\dots,d_2$ are implicitly given in the definition of an $ (\eta,\delta,t) $-equitable family of partitions.
	
	It is well-known that 
	the reduced $k$-graph $R$ almost inherits the minimum degree condition  and the density condition from $H$.
	\begin{lemma}\label{deg lem}
		Suppose that $k\ge3$,  $1\leq d\leq k-1$, and $\gamma,d'\ll \theta \ll \mu$. Let $H$ be a $k$-graph of order $n$ and $ R:=R(H,\mathcal{P}, d') $ be the reduced $k$-graph  of $ H $, where $ \mathcal{P} $ is the family of partitions we obtained by Theorem~\ref{reglm}. If  $\delta_d(H)\ge (\mu+\theta) \binom{n}{k-d}$,
		then all but at most $\gamma \binom{|R|}{d}$ $d$-sets $S \in \binom{V(R)}{d}$ have degree at least $(\mu+\theta/2) \binom{|R|}{k-d}$. Moreover, if  $e(H)\ge (\mu+\theta)  \binom{n}{k}$, then  $e(R)\ge (\mu+\theta/2)  \binom{|R|}{k}$.
	\end{lemma}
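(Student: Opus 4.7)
The plan is a direct double-counting between $H$ and its reduced $k$-graph $R$. Write $N:=|R|$ and $m:=n/N$ for the cluster size.

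First I would estimate the number of \emph{bad} edges of $H$: those which either (i) have two vertices in a common cluster, (ii) are crossing but lie in a polyad on which $H$ is not $(\delta_k,r)$-regular, or (iii) are crossing and lie in a polyad of density less than $d'$. Direct counts give: (i) at most $\binom{k}{2} m \binom{n}{k-1}\le (k^2/N)n^k$; (ii) at most $\delta_k n^k$ by Theorem~\ref{reglm}(2); and (iii) at most $d'n^k$ upon summing the low-density contribution across polyads. Setting $\epsilon:=k^2/N+\delta_k+d'$, the hierarchy implies $\epsilon\ll\theta$, and the total number of bad edges is at most $\epsilon n^k$. Every non-bad (``good'') edge has its $k$ vertices in distinct clusters forming a $k$-tuple that is an edge of $R$.

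Next, I fix a $d$-set of clusters $S=\{V_{i_1},\ldots,V_{i_d}\}$ and sum the minimum $d$-degree hypothesis over all $d$-tuples $(v_1,\ldots,v_d)\in V_{i_1}\times\cdots\times V_{i_d}$ to get
\[
\sum_{e\in E(H)}\prod_{j=1}^d|e\cap V_{i_j}|\;\ge\;m^d(\mu+\theta)\binom{n}{k-d}.
\]
Split the left-hand side by good vs.\ bad. Each good edge contributes $0$ or $1$, and when it contributes $1$ the cluster $k$-tuple of $e$ has the form $S\cup T$ with $S\cup T\in E(R)$, so the good contribution is at most $\deg_R(S)\cdot m^k$; each bad edge contributes at most $k^d$. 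Define $B:=\{S:\deg_R(S)<(\mu+\theta/2)\binom{N}{k-d}\}$. For any $S\in B$, the inequality forces the bad contribution at $S$ to exceed $(\theta/3)m^k\binom{N}{k-d}$. Summing this over $S\in B$, interchanging the order of summation, and using that each fixed bad edge contributes to at most $\binom{k}{d}$ of these $S$-sums (since $e$ meets at most $k$ clusters), I would obtain
\[
|B|\cdot\frac{\theta}{3}\,m^k\binom{N}{k-d}\;\le\;k^d\binom{k}{d}\,\epsilon\, n^k,
\]
which rearranges to $|B|\le\gamma\binom{N}{d}$ by the hierarchy $\epsilon\ll\theta\gamma$. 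The ``moreover'' clause on $e(R)$ follows more simply from the global inequality $e(H)\le e(R)\cdot m^k+\epsilon n^k$.

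The main obstacle is the averaging step: for an individual $S$, bad edges could conspire to render the per-$S$ lower bound on $\deg_R(S)$ useless, so one cannot deduce the bound for \emph{every} $S$. Instead the total shortfall must be controlled via the interchange of summation, where the combinatorial bound $\sum_{|S|=d}\prod_j|e\cap V_{i_j}|\le k^d\binom{k}{d}$ (exploiting $\sum_i|e\cap V_i|=k$) is what keeps the total count tight. All constants arising are $k$-dependent only and are absorbed into the hierarchy $\gamma,d'\ll\theta\ll\mu$.
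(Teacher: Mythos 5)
The paper omits the proof of Lemma~\ref{deg lem}, stating that it is ``by now a routine calculation,'' so there is no in-paper proof to compare against; your double-counting argument is exactly the standard routine calculation that is intended (and that appears in, e.g., K\"uhn--Mycroft--Osthus). Your decomposition into bad edges of types (i)--(iii), the summation of the degree hypothesis over $V_{i_1}\times\cdots\times V_{i_d}$, the identity $\sum_{(v_1,\ldots,v_d)}\deg_H(\{v_1,\ldots,v_d\})=\sum_e\prod_j|e\cap V_{i_j}|$, the bound $\deg_R(S)\cdot m^k$ on the good contribution, and the interchange of summation using $\sum_S\prod_j|e\cap V_{i_j}|\le\binom{k}{d}$ (your $k^d\binom{k}{d}$ is a slightly cruder but valid version) are all correct. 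One small remark worth making explicit: you also need $m^d\binom{n}{k-d}\ge m^k\binom{N}{k-d}$ (which holds term by term since $n-j\ge m(N-j)$ for $j\ge 0$), and the final step requires $\epsilon\lesssim\gamma\theta$, hence $d'\ll\gamma$; the lemma's stated hierarchy ``$\gamma,d'\ll\theta$'' is ambiguous on this point, but the paper's actual use of the lemma (in the proofs of Lemma~\ref{pat lem} and Lemma~\ref{lem cbad}) always chooses $d'\ll\gamma$, so your reading is the intended one.
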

	The proof of Lemma~\ref{deg lem} is by now a routine calculation and thus omitted.
	Similar calculations can be found e.g.~\cite{Daniela2010Hamilton}.

	\section{The proof of Lemma~\ref{pat lem}}
	
	In this section we prove Lemma~\ref{pat lem}.
	Here is a quick outline of our proof, which also follows the use of the regularity method as in previous works.
	We apply the regularity lemma to $H$ and obtain a regular partition together with a reduced $k$-graph $R$.
	Since $R$ almost inherits the minimum $d$-degree of $H$, by the definition of $t(k,d,\ell)$, we can find an almost perfect $Y_{k,2\ell}$-tiling in $R$.
	Thus, each copy of $Y_{k,2\ell}$ corresponds to $2k-2\ell$ clusters in the partition and the regularity allows us to cover almost all vertices of these clusters by a bounded number of vertex-disjoint $\ell$-paths.
	Altogether we get the desired path cover in Lemma~\ref{pat lem}.
	
	\begin{lemma}[\cite{ferber2020diractype}, Lemma 3.4]
		\label{perfectdeg lem}
		Consider an $n$-vertex $k$-graph $G$, where all but $\delta \binom{n}{d}$ of the $d$-sets have degree at least $(\mu + \eta) \binom{n-d}{k-d}$. Let $S$ be a uniformly random subset of $Q$ vertices of $G$. Then with probability at least $1- \binom{Q}{d}(\delta+\exp^{-\Omega(\eta^{2}Q)})$, the random induced subgraph $G[S]$ has minimum $d$-degree at least $(\mu + \eta/2) \binom{Q-d}{k-d}$.
	\end{lemma}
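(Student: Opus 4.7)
The plan is a union bound over all $d$-subsets $T$ of $V(G)$, combined with a concentration estimate for the $d$-degree of each ``heavy'' $T$ inside the random subset. Call a $d$-set $T\subseteq V(G)$ \emph{heavy} if $\deg_G(T)\ge(\mu+\eta)\binom{n-d}{k-d}$ and \emph{light} otherwise; by hypothesis there are at most $\delta\binom{n}{d}$ light $d$-sets. For each light $T$ one has $\Pr[T\subseteq S]=\binom{n-d}{Q-d}/\binom{n}{Q}=\binom{Q}{d}/\binom{n}{d}$, so the total probability that \emph{some} light $d$-set is contained in $S$ is at most $\delta\binom{Q}{d}$, accounting for the first term in the target bound.

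For each heavy $T$ I would condition on $T\subseteq S$, so that $S\setminus T$ becomes a uniformly random $(Q-d)$-subset of $V(G)\setminus T$, and set $Y:=|N_{G[S]}(T)|$. Then by linearity,
\[
\mathbb{E}[Y\mid T\subseteq S]=|N_G(T)|\cdot\frac{\binom{Q-d}{k-d}}{\binom{n-d}{k-d}}\ge(\mu+\eta)\binom{Q-d}{k-d},
\]
so it suffices to show $\Pr\!\left[Y<\mathbb{E}[Y]-\tfrac{\eta}{2}\binom{Q-d}{k-d}\;\big|\;T\subseteq S\right]\le \exp(-\Omega(\eta^{2}Q))$. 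I would obtain this via a Doob martingale exposing the $Q-d$ vertices of $S\setminus T$ one at a time: swapping a single exposed vertex changes $Y$ by at most $\binom{n-d-1}{k-d-1}$, and Azuma--Hoeffding then produces a tail bound whose exponent has the required rate. Summing over the at most $\binom{Q}{d}$ heavy $d$-subsets that can lie in $S$ yields the second term $\binom{Q}{d}\exp(-\Omega(\eta^{2}Q))$, and a final union bound completes the proof.

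The only delicate point is the concentration step, since $Y$ is a degree-$(k-d)$ polynomial in the indicator variables of the vertex selection and therefore not itself a sum of independent Bernoullis. In the regime of interest here (in particular, in the application within the proof of Theorem~\ref{thm}, where $Q$ is a constant fraction of $n$), the Azuma estimate above comfortably delivers the claimed $\exp(-\Omega(\eta^{2}Q))$ rate, with the constant in $\Omega(\cdot)$ depending only on $k$, $d$, and the ratio $Q/n$; alternatively, one may pass to a Bernoulli-subset model via a standard coupling between uniform and binomial random subsets and then invoke a polynomial concentration inequality for the count $\sum_{R\in N_G(T)}\mathbf{1}[R\subseteq S]$. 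The remaining bookkeeping (verifying that the two error terms combine as stated) is routine.
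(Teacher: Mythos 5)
The paper does not prove this lemma itself but simply cites it from Ferber~\emph{et al.}, so I cannot compare against an internal proof; I will instead assess your argument on its own. Your overall plan --- classify $d$-sets as heavy or light, bound the probability that a light $d$-set lands in $S$, and for each heavy $T\subseteq S$ concentrate $\deg_{G[S]}(T)$ via a Doob martingale --- is the right decomposition, and the union-bound bookkeeping (each contributing a $\binom{Q}{d}$ factor) is carried out correctly.

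However, there is a genuine gap in the concentration step. The Lipschitz constant you quote, $\binom{n-d-1}{k-d-1}$, is far too loose: $Y$ counts only $(k-d)$-subsets lying inside $S\setminus T$, which has $Q-d$ vertices, so replacing one exposed vertex changes $Y$ by at most $\binom{Q-d-1}{k-d-1}$, not $\binom{n-d-1}{k-d-1}$. With your looser constant, Azuma yields only
\[
\exp\!\left(-\Omega\!\left(\frac{\eta^2\binom{Q-d}{k-d}^2}{(Q-d)\binom{n-d-1}{k-d-1}^2}\right)\right)=\exp\!\left(-\Omega\!\left(\eta^2 Q\,(Q/n)^{2(k-d-1)}\right)\right),
\]
which is not $\exp(-\Omega(\eta^2 Q))$ once $Q\ll n$ and $k-d\ge 2$; the implied constant would degenerate with $Q/n$. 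Your proposed escape --- that ``$Q$ is a constant fraction of $n$'' in the application --- is in fact false for this paper: Lemma~\ref{perfectdeg lem} is invoked inside the proof of Lemma~\ref{pat lem} on the reduced $k$-graph $R$, whose vertex count $a_1$ satisfies $a_1\ge 1/\eta$, and the hierarchy there fixes $Q$ (in terms of $\varepsilon$ and $\mu$) \emph{before} $\gamma,\eta,t$ are chosen, so $Q/a_1$ can be made arbitrarily small. Thus the Azuma estimate as you have written it collapses exactly in the regime the paper needs.

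The remedy is simply to use the sharp Lipschitz bound. With $c=\binom{Q-d-1}{k-d-1}$ and deviation $t=\tfrac{\eta}{2}\binom{Q-d}{k-d}$, the Azuma exponent is
\[
\frac{t^2}{2(Q-d)c^2}=\frac{(\eta/2)^2\left(\tfrac{Q-d}{k-d}\right)^2}{2(Q-d)}=\frac{\eta^2(Q-d)}{8(k-d)^2},
\]
giving $\exp(-\Omega(\eta^2 Q))$ with the implied constant depending only on $k-d$, uniformly in $Q/n$. With that single correction, the light/heavy split, the conditional expectation computation, and the union bounds assemble precisely to the claimed failure probability $\binom{Q}{d}\bigl(\delta+\exp(-\Omega(\eta^2 Q))\bigr)$. (Your alternative route via Bernoulli coupling and a polynomial concentration inequality would also work, but it is unnecessary once the Lipschitz constant is fixed.)
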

	
	\begin{fact}\label{f1}
		Let $ P $ be an $\ell$-path on $ n $ vertices with $t := \frac{n-\ell}{k-\ell} $ edges, where $ t $ is odd. Then there is a partition $ V_1, V_2, \dots, V_k $ of $V(P) $  such that $ |V_i|=t$, $i\in[k-2\ell] $  and  $ |V_j|=(t+1)/2$, $j\in[k]\setminus[k-2\ell]  $, and each edge of $ P $ contains exactly one vertex of each $ V_i, i\in[k] $.
	\end{fact}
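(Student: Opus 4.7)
The plan is to construct the partition $V_1,\ldots,V_k$ explicitly by exploiting the block structure of $P$. Order the vertices as $u_1,\ldots,u_n$ along $P$ so that $e_s=\{u_{(s-1)(k-\ell)+1},\ldots,u_{(s-1)(k-\ell)+k}\}$ for each $s\in[t]$, and decompose $V(P)$ into: a head block $F$ consisting of the first $\ell$ vertices; a tail block $L$ consisting of the last $\ell$ vertices; \emph{middle blocks} $A_s$ of size $k-2\ell$ containing the vertices of $e_s$ not shared with any neighboring edge (for $s\in[t]$); and \emph{shared blocks} $C_s:=e_s\cap e_{s+1}$ of size $\ell$ (for $s\in[t-1]$). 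Each edge is a disjoint union of exactly three such blocks: $e_1=F\sqcup A_1\sqcup C_1$, $e_s=C_{s-1}\sqcup A_s\sqcup C_s$ for $1<s<t$, and $e_t=C_{t-1}\sqcup A_t\sqcup L$.

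Next, I would split the labels $[k]$ into \emph{middle labels} $[k-2\ell]$ and \emph{boundary labels} $\{k-2\ell+1,\ldots,k\}$. For the middle labels, assign label $j$ to the $j$-th vertex (in path order) of each $A_s$; this uses each middle label exactly once per edge and places exactly $t$ vertices in each class $V_j$ with $j\in[k-2\ell]$. For the boundary labels, fix an arbitrary partition $\{k-2\ell+1,\ldots,k\}=S\sqcup S^c$ into two $\ell$-subsets, together with arbitrary bijections between the vertices of each boundary block ($F$, $L$, or a $C_s$) and an $\ell$-subset of labels. Then alternate: assign the labels in $S$ to the blocks $C_1,C_3,\ldots$ and to $L$, and assign $S^c$ to $F$ and to $C_2,C_4,\ldots$. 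Consecutive boundary blocks then carry complementary label sets, so every interior edge $C_{s-1}\sqcup A_s\sqcup C_s$ contains one vertex of every boundary label.

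The hypothesis that $t$ is odd is used at precisely one place: it makes $t-1$ even so that $C_{t-1}$ receives $S^c$, which is complementary to the assignment $S$ for $L$; hence the final edge $e_t=C_{t-1}\sqcup A_t\sqcup L$ also picks up each label exactly once. A simple count then shows that each label in $S$ occurs once in $L$ and once in each of the $(t-1)/2$ odd-indexed $C_s$, for a total of $(t+1)/2$ vertices, and symmetrically each label in $S^c$ appears once in $F$ and once in each of the $(t-1)/2$ even-indexed $C_s$. Thus $|V_j|=t$ for $j\in[k-2\ell]$ and $|V_j|=(t+1)/2$ otherwise. The only real obstacle in this construction---matching up the two endpoints of the path with the alternating boundary pattern---is resolved by exactly this parity assumption.
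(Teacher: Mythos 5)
Your proof is correct and takes essentially the same approach as the paper's: the paper encodes the same head/overlap/middle block decomposition as a periodic coloring pattern of period $2(k-\ell)$, with the two halves of the pattern playing the role of your alternating label sets $S$ and $S^c$, and the parity of $t$ is used at the same point to close up the last edge. The difference is purely presentational.
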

	\begin{proof}
		Denote $P$ as $ x_1,\dots,x_n $.
		Color $ x_i $ with $ i $ for $ i\in [k]$. Let  
		\[C= \{1,\dots,k-2\ell,k-2\ell+1,\dots,k-\ell,1,\dots,k-2\ell,k-\ell+1,\dots, k \}.\] Then we repeatedly use the pattern $ C $ ($(t-1)/2$ times) to color $ x_{k+1},\dots,x_{n} $. Note that every edge contains $ k $ colors where each  color of $ [k-2\ell] $ is used $ t$ times and every other color is used $ (t-1)/2+1$ times. Let $ V_i$ be the color class with color $i$, which forms a partition as desired.
	\end{proof} 
	
	We use the following fact to build our path in a regular $k$-tuple.
	
	\begin{fact}\label{f222}
		Let $k\ge 3$, $1\le \ell<k/2$, $\varepsilon>0$ and $n\in \mathbb N$.
		Suppose $H$ is a $k$-partite $k$-graph with vertex set $V_1, \dots, V_k$ each of size $m$ and $e(H)\ge \varepsilon m^k$.
		Then $H$ contains an $\ell$-path $P$ with  $t\ge \varepsilon m/2$ edges whose vertex set are partitioned as in Fact~\ref{f1}, namely, $P$ contains $t$ vertices in each of $V_1,\dots, V_{k-2\ell}$ and $ (t-1)/2+1$ vertices in every other cluster.
	\end{fact}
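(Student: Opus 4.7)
The plan is to reduce constructing the $\ell$-path to finding a long simple path in an auxiliary bipartite graph and then realizing it edge-by-edge in $H$. Following Fact~\ref{f1}, each edge of $H$ decomposes uniquely as a triple $(A,B,C)$, where $A$ is an $\ell$-tuple from the ``$A$-parts'' $V_{k-\ell+1},\dots,V_k$, $B$ an $\ell$-tuple from the ``$B$-parts'' $V_{k-2\ell+1},\dots,V_{k-\ell}$, and $C$ a $(k-2\ell)$-tuple from the ``middle'' parts $V_1,\dots,V_{k-2\ell}$. The consecutive edges of the target $\ell$-path share alternately an $A$-tuple (between $e_{2i-1}$ and $e_{2i}$) and a $B$-tuple (between $e_{2i}$ and $e_{2i+1}$), while their $C$-tuples are pairwise vertex-disjoint.

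I would then introduce the auxiliary bipartite multigraph $\mathcal{G}$ on $\mathcal{A}=\prod_{j>k-\ell}V_j$ and $\mathcal{B}=\prod_{k-2\ell<j\le k-\ell}V_j$, placing weight $w(A,B)=|\{C:(A,B,C)\in E(H)\}|$ on every pair; then $\sum_{A,B}w(A,B)=e(H)\ge\varepsilon m^k$. Discarding pairs with weight below $c m^{k-2\ell}$ for an appropriate constant $c>0$ loses at most $c m^k$ weight, so the simple bipartite graph $\mathcal{G}'$ of surviving pairs has at least $(\varepsilon-c)m^{2\ell}$ edges on at most $2m^\ell$ vertices. By the Erd\H{o}s--Gallai theorem, $\mathcal{G}'$ contains a simple path of length at least $(\varepsilon-c)m^\ell$, and I take an initial segment of odd length $t$ with $t\ge \varepsilon m/2$ starting at a $B$-tuple as the desired skeleton $B_1A_1B_2A_2\cdots$.

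Realizing this skeleton in $H$ reduces to greedily picking a $C$-tuple $C_i$ for each skeleton edge from the $\ge c m^{k-2\ell}$ available candidates so that the $C_i$'s are pairwise vertex-disjoint. At step $i$ a union bound shows at most $(k-2\ell)t\cdot m^{k-2\ell-1}$ ``forbidden'' candidates (those hitting a middle vertex used earlier), so the greedy succeeds whenever $c m>(k-2\ell)t$. The delicate point is to choose $c$ satisfying both this greedy inequality and the Erd\H{o}s--Gallai bound simultaneously: when $k-2\ell=1$ the choice $c=\varepsilon/2$ balances the two conditions immediately, while for $k-2\ell\ge 2$ the naive thresholds conflict and one first performs a brief cleaning step on $H$---iteratively deleting edges whose $A$- or $B$-tuple has very small degree---so that every surviving $(A,B)$-pair has its $C$-candidates well distributed across the middle parts and the greedy assignment reaches the required $t\ge \varepsilon m/2$.
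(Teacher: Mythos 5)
Your reduction to a bipartite auxiliary graph $\mathcal G$ on $\ell$-tuples, with Erd\H{o}s--Gallai supplying a skeleton path and a greedy $C$-assignment finishing the job, is a genuinely different route from the paper's (which iteratively deletes edges through low-degree crossing $\ell$-sets of $H$ itself and then grows the $\ell$-path by a direct greedy extension inside the cleaned hypergraph). However, there is a substantive gap for $\ell\ge 2$: a path in your graph $\mathcal G'$ only guarantees that the $A$- and $B$-nodes along the skeleton are distinct \emph{as $\ell$-tuples}. Two distinct tuples $A_i\neq A_j$ may share a coordinate in some $V_r$, and when they do the intended $\ell$-path in $H$ repeats a vertex, hence is not an $\ell$-path at all and does not have the vertex distribution of Fact~\ref{f1}. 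Nothing in the proposal prevents such collisions; Erd\H{o}s--Gallai hands you one path with no control over which $\ell$-tuples it visits, and trimming to a vertex-disjoint subcollection is not straightforward because you need a consecutive subpath, not an arbitrary subfamily. (For $\ell=1$ this issue disappears, since $1$-tuples are vertices.)

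The second soft spot, which you do flag, is $k-2\ell\ge 2$. The Erd\H{o}s--Gallai step wants the threshold $c$ small, while greedily choosing pairwise disjoint $C$-tuples needs roughly $cm>(k-2\ell)t$, and for $t\ge\varepsilon m/2$ these are incompatible. The proposed repair---deleting edges with small $A$- or $B$-degree so that $C$-candidates are ``well distributed''---is an assertion rather than an argument: a surviving $(A,B)$-pair having many $C$-candidates is a count, not a spread condition on the middle parts, and it is not shown that any such cleaning lets the greedy reach $t\ge\varepsilon m/2$. The paper avoids both difficulties simultaneously by cleaning $H$ so that every crossing $\ell$-set has degree $0$ or at least $\varepsilon m^{k-\ell}/\bigl(2\binom{k}{\ell}\bigr)$ and then extending the path one edge at a time directly in $H$ from alternating $\ell$-ends, so that vertex-disjointness of \emph{all} new vertices (not just the $C$-part) is enforced in a single greedy step.
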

	
	\begin{proof}
		We call an $\ell$-set crossing if it contains at most one vertex from each $V_i$.
		From $H$ we iteratively do the following: in the current $k$-graph, if there is a crossing $\ell$-set $S$ such that $\deg(S)<\varepsilon m^{k-\ell}/(2\binom{k}{\ell})$, then we remove all edges containing $S$.
		The resulting $k$-partite $k$-graph has at least $\varepsilon m^k - m^\ell\cdot \varepsilon m^{k-\ell}/2>0$ edges, and every crossing $\ell$-set has degree either zero or at least $\varepsilon m^{k-\ell}/(2\binom{k}{\ell})$.
		
		We now find the desired path $P$ greedily, starting from an arbitrary edge.
		We extend the path alternately from either an $\ell$-end in $V_{k-2\ell+1}\times \cdots\times V_{k-\ell}$ or $V_{k-\ell+1}\times \cdots\times V_{k}$, until $P$ has $\varepsilon m/2$ edges.
		All extensions will succeed because the current path will block at most $(\varepsilon m/2)^{k-\ell} < \varepsilon m^{k-\ell}/(2\binom{k}{\ell})$ edges, and an $\ell$-end has degree nonzero (so at least $\varepsilon m^{k-\ell}/(2\binom{k}{\ell})$).
	\end{proof}

	\begin{proof}[Proof of Lemma~\ref{pat lem}]
		Choose constants such that
		\[1/n\ll 1/D  \ll 1/r,\delta ,c  \ll \min\{\delta_{k},1/t \} \le \delta_{k},\eta \ll d' \ll \gamma  \ll \varepsilon \ll \mu,1/k.\]
		We may assume that $t!\mid n$. Apply Theorem~\ref{reglm} to $H$, and let
		$V_{1}, \dots,V_{a_{1}}$ be the clusters of the partition obtained. Let $m = n/a_{1}$ be the size of each of these clusters and $R$ be the reduced $k$-graph on these clusters. The next claim says $R$ has an almost  perfect  $Y_{k,2\ell}$-tiling $ \mathcal{T}$. 
		\begin{claim}
			The $k$-graph $R$ has a $Y_{k,2\ell}$-tiling covering all but at most
			$\varepsilon a_{1}/{2}$ vertices of $R$.
		\end{claim}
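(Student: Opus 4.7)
The plan is to first extract an approximate minimum $d$-degree condition on the reduced $k$-graph $R$ via Lemma~\ref{deg lem}, then to partition $R$ randomly into moderately-sized pieces on which an exact minimum $d$-degree condition holds, and finally to apply the asymptotic definition of $t(k,d,\ell)$ to each piece. Concretely, applying Lemma~\ref{deg lem} with $\theta=\mu$ to $R$ shows that all but at most $\gamma\binom{|R|}{d}$ of the $d$-subsets of $V(R)$ have degree at least $(t(k,d,\ell)+\mu/2)\binom{|R|}{k-d}$ in $R$. I would then fix an integer $Q$ satisfying (i) $Q^{d}\gamma\le \varepsilon/20$, (ii) $\exp(-\Omega(\mu^2 Q))\le\gamma$, and (iii) $Q$ is large enough that the definition of $t(k,d,\ell)$ guarantees that every $k$-graph on $Q$ vertices with minimum $d$-degree at least $(t(k,d,\ell)+\mu/4)\binom{Q-d}{k-d}$ admits a $Y_{k,2\ell}$-tiling covering all but at most $\varepsilon Q/4$ vertices. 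Such $Q$ exists because $\gamma\ll\varepsilon\ll\mu$, and moreover $a_1\ge 1/\eta\gg Q$ since $\eta\ll\gamma$.

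Next I would consider a uniformly random partition $V(R)=V_1\cup\cdots\cup V_s\cup R_0$ with $|V_i|=Q$ and $|R_0|<Q$, and call $V_i$ \emph{good} if $R[V_i]$ has minimum $d$-degree at least $(t(k,d,\ell)+\mu/4)\binom{Q-d}{k-d}$. Since the marginal of each $V_i$ is a uniformly random $Q$-subset of $V(R)$, Lemma~\ref{perfectdeg lem} shows that the probability that $V_i$ is bad is at most $\binom{Q}{d}(\gamma+\exp(-\Omega(\mu^2 Q)))\le 2\binom{Q}{d}\gamma$. By linearity of expectation, the expected number of vertices lying in bad parts is at most $2a_1 Q^d\gamma\le \varepsilon a_1/10$, so there exists a partition with at most $\varepsilon a_1/5$ vertices lying in bad parts.

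For every good $V_i$, condition (iii) produces a $Y_{k,2\ell}$-tiling $\mathcal{T}_i$ in $R[V_i]$ covering all but at most $\varepsilon Q/4$ vertices, and the union $\mathcal{T}=\bigcup_i\mathcal{T}_i$ then leaves at most $\varepsilon a_1/5+\varepsilon a_1/4+Q\le\varepsilon a_1/2$ vertices of $R$ uncovered, as required. The main delicacy I foresee is to choose $Q$ that is simultaneously small enough to control the bad parts via the union bound (forcing $Q^d\gamma$ to be small) and large enough both for the exponential-error term of Lemma~\ref{perfectdeg lem} and for the asymptotic definition of $t(k,d,\ell)$ to take effect; the hierarchy $\gamma\ll\varepsilon\ll\mu$ is exactly what makes this balance possible.
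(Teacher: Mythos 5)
Your proof is correct and follows essentially the same route as the paper: pass the approximate $d$-degree condition from $H$ to the reduced graph $R$ via Lemma~\ref{deg lem}, randomly partition $V(R)$ into $Q$-sets, use Lemma~\ref{perfectdeg lem} to show most parts inherit a genuine minimum $d$-degree condition, and then apply the definition of $t(k,d,\ell)$ part-by-part. The only cosmetic differences are bookkeeping: you give the bad-part bound in terms of vertices (via $2a_1Q^d\gamma$) rather than of $Q$-sets (the paper's $\lambda a_1/Q$), and your final accounting $\varepsilon a_1/5 + \varepsilon a_1/4 + Q \le \varepsilon a_1/2$ matches the paper's $\lambda a_1 + \varepsilon a_1/4 + (Q-1)$ up to constants.
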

		\begin{proof}
			By Lemma~\ref{deg lem}, all but at most $\gamma \binom{a_{1}}{d}$ $d$-sets $S \in \binom{V(R)}{d}$ have degree at least $(t(k,d,\ell)+\mu/2) \binom{a_{1}}{k-d}$. Choose $Q$ to be large enough. 
			In particular, by the definition of $t(k,d,\ell)$ every $k$-graph $G$ of order $Q$ with $\delta_d(G)\ge(t(k, d, \ell)+\mu/4) \binom{Q}{k-d}$ contains a $Y_{k,2\ell}$-tiling covering all but at most $ \varepsilon Q/4 $ vertices of $ G $. Let $\lambda=\binom{Q}{d}(\gamma+\exp^{-\Omega((\mu/2)^{2}Q)})$, and note that we can make $\lambda < \varepsilon/4$ by making $Q$ large.
			
			Now, we randomly partition $V_{1}, \dots,V_{a_{1}}$ into $\lfloor a_1/Q\rfloor$ subsets of size $Q$ (and give up at most $Q-1$ of them). By Lemma~\ref{perfectdeg lem}, the expected number of $Q$-sets that have minimum $d$-degree less than $(t(k,d,\ell)+\mu/4)\binom{Q}{k-d}$ is $ \lambda a_1/Q$. Thus with positive probability all but a $\lambda$-fraction of the subsets have minimum $d$-degree at least $(t(k,d,\ell)+\mu/4)\binom{Q}{k-d}$. 
			As discussed above, each of these $Q$-subsets $S \subseteq V(G)$ has the property that $G[S]$ has a $Y_{k,2\ell}$-tiling that leaves at most $\varepsilon Q/4$ vertices. 
			Combining these $Y_{k,2\ell}$-tilings gives a  $Y_{k,2\ell}$-tiling of $R$ covering all but at most $Q-1+\lambda a_1+(\varepsilon Q/4) ( {a_1}/{Q})\le {\varepsilon a_{1}}/{2}$ vertices.
		\end{proof}

		
		For each  $ Y_{k,2\ell}\in \mathcal{T}$  with edges denoted by $j_1j_2\dots j_k$ and $j_{k-2\ell+1}j_{k-2\ell+2}\dots j_{2k-2\ell}$, let $\mathcal{V}:=\{V_{j_{k-2\ell+1}},V_{j_{k-2\ell+2}},\dots,V_{j_{k}}\} $. 
		Let $ H^{*}_1 $ and $ H^{*}_2$ be the induced subgraph of $H$ on $ V_{j_1},V_{j_2},\dots,V_{j_k} $ and $ V_{j_{k-2\ell+1}},V_{j_{k-2\ell+2}},\dots,V_{j_{2k-2\ell}} $.
		Write $d^*:=(d_k-\delta_k)\prod_{2\le i\le k-1}(d_i-\delta)$ and let $t$ be an odd number between $\lceil d^*\varepsilon m/8\rceil-1$ and $\lceil d^*\varepsilon m/8\rceil$.
		Given any $k$-partite subgraph of $ H^{*}_1 $ and $ H^{*}_2$ with $\varepsilon m/4$ vertices from each part, the regularity implies that there are at least $d^*(\varepsilon m/4)^k$ edges.
		Fact~\ref{f222} implies $ H^{*}_1 $ contains an $\ell$-path $P_1 $ with $t$ edges which intersects each cluster of $ \mathcal{V} $ in $(t+1)/2 $ vertices and every other cluster in $ t $ vertices. 
		Similarly, we can find an $\ell$-path $P_1' $ in $ H^{*}_2$ with $ t$ edges which intersects each cluster of $ \mathcal{V} $ in $ (t+1)/2 $ vertices. 
		Furthermore, we can delete the vertices in $ P_1 $ and $ P_1' $ and find similar $\ell$-paths of length $t$ with the same vertex distribution in the remaining subgraph of $ H^{*}_1 $ and $ H^{*}_2$, until there is one cluster of $ \mathcal{V} $ with less than $\varepsilon m/4$ remaining vertices.
		Note that we obtain at most $ m/t \le 9/(d^*\varepsilon)$ vertex disjoint $\ell$-paths.
		When the process stops, the number of remaining vertices in each cluster is at most $\varepsilon m/4 + 9/(d^*\varepsilon)\le \varepsilon m/2$.
		
		Executing this for all copies of $ Y_{k,2\ell}$ in $ \mathcal{T}$, we obtain a collection of at most $(a_1/(2k-2\ell)) 9/(d^*\varepsilon)\le D $ $ \ell $-paths.
		The total number of uncovered vertices is at most
		$(\varepsilon m/2)a_1+(\varepsilon a_1/2)m \le\varepsilon n$.\end{proof}

	\section{Proof of Theorem~\ref{ineg}}
	
	We collect known results on the Tur\'an number of $Y_{k,b}$.
	
	\begin{thm}\cite{Erd1974Intersection,Peter1985Forbidding}  \label{FrFu}
		For $k > b \ge 0$, there exists an integer $n_k$ such that for any $n \ge n_k$, ${\rm ex}(n,Y_{k, b}) \le \binom{n-1}{k-1}$ holds.
	\end{thm}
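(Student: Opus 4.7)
The statement is classical and combines two well-known results: for $b=0$ it is the Erd\H{o}s--Ko--Rado theorem, while for $b \ge 1$ it is a forbidden-intersection result of Frankl--F\"uredi. I would split the proof into these two cases.

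For $b=0$, a $Y_{k,0}$-free $k$-graph is precisely an intersecting family, since $Y_{k,0}$ consists of two disjoint edges. Hence $\ex(n,Y_{k,0}) \le \binom{n-1}{k-1}$ for all $n \ge 2k$ is immediate from Erd\H{o}s--Ko--Rado \cite{Erd1974Intersection}.

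For $b \ge 1$, my plan is a link reduction to EKR. For each $B \in \binom{V(H)}{b}$, define the link $L(B) := \{T \in \binom{V(H)\setminus B}{k-b} : B \cup T \in E(H)\}$, a $(k-b)$-graph on $n-b$ vertices. The crucial observation is that $L(B)$ must be intersecting: if $T_1, T_2 \in L(B)$ were disjoint, then $B \cup T_1$ and $B \cup T_2$ would be two edges of $H$ meeting in exactly $B$, forming a forbidden $Y_{k,b}$. Applying Erd\H{o}s--Ko--Rado to $L(B)$, which requires $n - b \ge 2(k-b)$ and can be absorbed into the choice of $n_k$, yields $|L(B)| \le \binom{n-b-1}{k-b-1}$.

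Double counting pairs $(B,e)$ with $e \in E(H)$ and $B \in \binom{e}{b}$ then gives
\begin{equation*}
\binom{k}{b}\, e(H) \;=\; \sum_{B \in \binom{V(H)}{b}} |L(B)| \;\le\; \binom{n}{b}\binom{n-b-1}{k-b-1}.
\end{equation*}
A short manipulation, using $\binom{n}{b}\binom{n-b-1}{k-b-1} = \binom{k-1}{b}\binom{n}{k-1}(n-k+1)/(n-b)$, converts this to $e(H) \le \frac{b}{k}\cdot\frac{n-k+1}{n-b}\cdot\binom{n}{k-1}$. Comparing with $\binom{n-1}{k-1} = \frac{n-k+1}{n}\binom{n}{k-1}$, the required bound reduces to $b(n+k) \le kn$, which holds for every $b \le k-1$ once $n$ is taken sufficiently large in terms of $k$. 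There is no real obstacle; the proof is essentially just a reduction to EKR applied to links, and the argument in fact yields a saving of a factor $\approx b/k$ over the stated bound, but this improvement is not needed in the rest of the paper.
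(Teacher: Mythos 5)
Your approach is sound and, in fact, more self-contained than what the paper does. The paper does not supply a proof of this theorem at all: it is cited, with the discussion after the statement splitting into three cases — $b=0$ is the Erd\H{o}s--Ko--Rado theorem, $1\le b\le k-2$ is delegated to the stronger Frankl--F\"uredi bound ${\rm ex}(n,Y_{k,b})=O(n^{\max\{b,k-b-1\}})$, and $b=k-1$ is handled by ``a simple calculation.'' Your unified link-plus-double-counting reduction to EKR recovers the full statement for all $b\ge1$ without invoking Frankl--F\"uredi, and it is really the same ``simple calculation'' the paper alludes to for $b=k-1$, just run for general $b$. What you give up is precision: your bound $O(n^{k-1})$ does not recover Frankl--F\"uredi's $O(n^{\max\{b,k-b-1\}})$ when $1\le b\le k-2$, but since the theorem only asserts $\binom{n-1}{k-1}$ this is irrelevant here. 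The key observation — that $T_1,T_2\in L(B)$ disjoint and distinct would give two edges meeting in exactly $B$, hence a forbidden $Y_{k,b}$ — is correct, and the EKR hypothesis $n-b\ge 2(k-b)$ is subsumed in the choice of $n_k$.

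One arithmetic slip worth fixing: from the identity $\binom{n}{b}\binom{n-b-1}{k-b-1}=\binom{k-1}{b}\binom{n}{k-1}\frac{n-k+1}{n-b}$ and the double count $\binom{k}{b}e(H)\le\binom{n}{b}\binom{n-b-1}{k-b-1}$, the coefficient is $\binom{k-1}{b}/\binom{k}{b}=\frac{k-b}{k}$, not $\frac{b}{k}$. Thus $e(H)\le\frac{k-b}{k}\cdot\frac{n-k+1}{n-b}\binom{n}{k-1}$, and comparing with $\binom{n-1}{k-1}=\frac{n-k+1}{n}\binom{n}{k-1}$ the required inequality simplifies to $n(k-b)\le k(n-b)$, i.e.\ $nb\ge kb$, which holds as soon as $n\ge k$ (and is an equality when $b=0$). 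So the conclusion stands — the final constraint is even cleaner than the $b(n+k)\le kn$ you wrote — and the ``saving factor'' over $\binom{n-1}{k-1}$ is asymptotically $\frac{k-b}{k}$ rather than $\frac{b}{k}$.
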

	For $b=0$, Theorem~\ref{FrFu} is just the Erd\H{o}s--Ko--Rado Theorem \cite{Erd1974Intersection}; for $1\le b\le k-2$, it follows from a result of Frankl and F\"uredi \cite{Peter1985Forbidding} and in fact, a more precise bound $O( n^{\max \{b, k-b-1 \}})$ is known; for $b=k-1$, a simple calculation gives $\ex(n,Y_{k, b}) \le \frac{1}{k}\binom{n}{k-1}$. Next we apply induction on the size of $Y_{k,b}$-tiling and use Theorem~\ref{FrFu} to show Theorem~\ref{ineg}.

	\begin{proof}[Proof of Theorem~\ref{ineg}]
		Within the proof, we write $Y: = Y_{k, b}$ for short.
		Let $s_0$ be sufficiently  large and $ s\ge s_0$ so that \begin{equation}\label{eqno5.1}
		n-1-(2k-b)(s-1) \ge n_{k-1} \  \text{and} \  n \ge n_k,
		\end{equation} 
		namely, we can apply Theorem \ref{FrFu}.  We introduce a new integer $ t \in [s] $ and
	 prove that if $ n \ge (2(2k-b)^2+1)(k-1)s+t$ and $e(H) \ge \binom{n}{k} - \binom{n-t+1}{k} +  \binom{n-1}{k-1}+ \binom{n-1}{k-2}(2k-b)t$, then $H$ contains a $Y$-tiling of size $t$.
		
		For the base case $t =1$, if the theorem fails, then $H$  contains no copy of $Y$, thus $e(H)\le \binom{n-1}{k-1}$  by Theorem \ref{FrFu}, which is a contradiction.
		Suppose that the theorem holds for $t-1$, but fails for $t \ge 2$. 
		In particular, $H$ contains a $Y$-tiling of size $t-1$.
		If there is a vertex $v$ with $\deg (v) > \binom{n-1}{k-1} - \binom{n-1-(2k-b)(t-1)}{k-1} + \binom{n-1}{k-2}$,
		then we have
		\[
		e(H - v) \ge e(H) - \binom{n-1}{k-1}  >\binom{n-1}{k} - \binom{n-t+1}{k} + \binom{n-2}{k-1}+\binom{n-2}{k-2}(2k-b)(t-2).
		\]
		Thus by induction hypothesis, $H-v$ contains a $Y$-tiling $M$ of size $t-1$. Note that the number of members of $ N(v) $ intersecting $ V(M) $ is at most $ \binom{n-1}{k-1} - \binom{n-1-(2k-b)(t-1)}{k-1}  $.
		Since $\deg (v)$ is large enough, by Theorem \ref{FrFu} and equation~\ref{eqno5.1}, $ N(v) $ contains a copy of $ Y_{k-1, b-1} $ which is disjoint from $ V(M) $.
		Therefore we obtain a $Y$-tiling of size $t$, which is a contradiction.
		
		Now we may assume a maximum degree condition of $H$ \[
		\Delta(H) \le \binom{n-1}{k-1} - \binom{n-1-(2k-b)(t-1)}{k-1} + \binom{n-1}{k-2} <  (2k-b)(t-1) \binom{n-2}{k-2} +\binom{n-1}{k-2},
		\]
		where we used 
		$
		\binom{n-1}{k-1} - \binom{n-1-(2k-b)(t-1)}{k-1} = \binom{n-2}{k-2} + \dots + \binom{n-1-(2k-b)(t-1)}{k-2} < (2k-b)(t-1) \binom{n-2}{k-2}.
		$
		Let $M$ be a $Y$-tiling of size $t-1$ in $H$ and note that $V(H)\setminus V(M)$ is $Y$-free.
		Therefore, we have \[
		e(H) \le \binom{n-1}{k-1}+\Delta(H) (2k-b)(t-1) < (2k-b)^2(t-1)^2 \binom{n-2}{k-2} +\binom{n-1}{k-1}+ \binom{n-1}{k-2} (2k-b)(t-1).
		\]
		Combining with the lower bound on $e(H)$, we have
		\[
		(t-1)\frac{n-t+1}{k-1} \binom{n-t}{k-2}  = (t-1) \binom{n-t+1}{k-1} < \binom{n}{k} - \binom{n-t+1}{k}  <  (2k-b)^2(t-1)^2 \binom{n-2}{k-2},
		\]
		where we used $\binom{n}{k} - \binom{n-t+1}{k} = \binom{n-1}{k-1} +\dots + \binom{n-t+1}{k-1} >(t-1) \binom{n-t+1}{k-1}$.
		Rearranging we get
		\[
		\left (\frac{n-t-k+3}{n-k+1} \right )^{k-2} < \frac{\binom{n-t}{k-2}}{ \binom{n-2}{k-2}} < \frac{(2k-b)^2 (k-1)(t-1) }{ n-t+1}.
		\]
		If $t=2$, then we obtain $n \le (2k-b)^2 (k-1)+1$, a contradiction.
		Otherwise $t \ge 3$, letting $ \gamma = \frac{t-2}{n-k+1} $ yields
		\[
		(1-\gamma)^{k-2}< (2k-b)^2(k-1) \frac{t-1}{n-t+1} < (2k-b)^2(k-1) \frac{t-2}{n-t-k+3}\frac{t-1}{t-2} \le 2(2k-b)^2(k-1) \frac{\gamma}{1-\gamma},
		\]
		that is, $(1-\gamma)^{k-1} < 2 (2k-b)^2(k-1)\gamma$.
		Since $
		n \ge (2(2k-b)^2+1)(k-1)s+t $, we derive
		\[
		\gamma \le \frac{t-2}{(2(2k-b)^2+1)(k-1)s+t-k+1} \le \frac{s-2}{(2(2k-b)^2+1)(k-1)(s-2)}.
		\]
		Note that for $0 < \gamma \le  \frac{1}{(2(2k-b)^2+1)(k-1)}$, 
		\[
		(1-\gamma)^{k-1} > 1-(k-1)\gamma \ge 2 (2k-b)^2(k-1)\gamma,
		\] which is a contradiction.
	\end{proof}

	
	
	
	

	\section{Large $Y_{3,2}$-tilings: a proof of Theorem~\ref{edge th2}}
	\label{sec:6}
	
	
	Let $H=(V,E)$ be a $3$-graph with $|V|=n$. In this section, we write $ Y:=Y_{3,2} $ for brevity. 
	A $\{Y,E\}$-tiling is a collection of vertex-disjoint copies of $Y$ and edges in $H$. 
	In the following theorem we study large $\{Y,E\}$-tilings, and then use it and the regularity method to show Theorem~\ref{edge th2}. 
	
	\begin{thm}
		\label{edge th1}
		Suppose that $ 0<\alpha \le  1/7$. Let $H$ be a 3-graph of order $n$ with
		\[
		e(H)\ge\binom{n}{3}-\binom{n-\alpha n}{3}+O(n^2).
		\]
		Then $H$ contains a $\{Y,E\}$-tiling covering more than $ 4\alpha n$ vertices.
	\end{thm}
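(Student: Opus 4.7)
I proceed by contradiction. Let $\mathcal T$ be a $\{Y,E\}$-tiling of $H$ of maximum cover, and suppose $|V(\mathcal T)|\le 4\alpha n$. Writing $W:=V(\mathcal T)$ and $U:=V(H)\setminus W$, we have $|U|\ge(1-4\alpha)n\ge 3n/7$ because $\alpha\le 1/7$. The first step is to extract structural consequences from the maximality of $\mathcal T$ via swap arguments. (i) $H[U]$ is edgeless, else any edge in $U$ could be appended as a new edge-component. (ii) For every edge-component $e=\{a,b,c\}\in\mathcal T$ and every pair $p\subset e$, no $z\in U$ satisfies $p\cup\{z\}\in E(H)$, else $e$ could be exchanged for the copy of $Y$ whose edges are $e$ and $p\cup\{z\}$, raising the cover by one. (iii) For every component $C\in\mathcal T$, no two vertex-disjoint edges of $H$ contained in $V(C)\cup U$ jointly span more than $|V(C)|$ vertices, else $C$ could be replaced by those two edges. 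In particular, (iii) imposes pairwise cross-intersecting conditions on the link graphs $L_v:=\{\{u_1,u_2\}\subset U:vu_1u_2\in E(H)\}$ for $v\in V(C)$, and a codegree restriction on pairs inside each $Y$-component (of its six internal pairs, at most three can extend into $U$).

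\textbf{Executing the contradiction.} Applying Frankl's resolution of the Erd\H{o}s matching conjecture for $k=3$ to the hypothesis $e(H)\ge\binom n3-\binom{n-\alpha n}{3}+O(n^2)$ produces a matching $M$ of size $\alpha n+1$ in $H$. Viewing $M$ as a $\{Y,E\}$-tiling of cover $3\alpha n+3$, it suffices to upgrade at least $\alpha n-2$ of its edges to vertex-disjoint copies of $Y$---an upgrade of $e=\{a,b,c\}\in M$ being a vertex $z\notin V(M)$ (not previously used) and a pair $p\subset e$ with $p\cup\{z\}\in E(H)$---since the resulting $\{Y,E\}$-tiling has cover strictly greater than $4\alpha n$, contradicting the maximality of $\mathcal T$. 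Thus at least four edges of $M$ must resist upgrading, and each such rigid edge $e$ has zero codegree from all three of its pairs into $V\setminus V(M)$. Aligning $M$ with the components of $\mathcal T$ through an augmenting-path-style re-pairing, one then converts these rigidities via (ii)--(iii) into a structural deficit in $H$; combined with the extremal upper bound $e(H)\le\binom n3-\binom{(1-\alpha)n}{3}$ that would follow from the existence of a covering set of size $\alpha n$, this contradicts the edge lower bound once the implicit constant in the $O(n^2)$ slack is chosen large enough.

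\textbf{Main obstacle.} The principal difficulty is the alignment of $M$ with $\mathcal T$, since the extremal configuration $K_n^{(3)}-K_{n-\alpha n}^{(3)}$ already satisfies (i)--(iii) with equality and meets the edge bound up to the $O(n^2)$ perturbation. The proof must therefore carry a stability-type dichotomy: either $H$ admits a covering set $A$ of size at most $\alpha n$, in which case $e(H)\le\binom n3-\binom{(1-\alpha)n}{3}$ contradicts the lower bound on $e(H)$; or $H$ deviates enough from the extremal structure to provide an extra $Y$-copy or edge that extends $\mathcal T$. Turning this dichotomy into quantitative deficits through the cross-intersecting-link constraints of (iii), while respecting $\alpha\le 1/7$ so that $|U|\ge 3n/7$ is large enough to host the required codegree witnesses, is the technical crux.
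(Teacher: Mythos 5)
Your structural observations (i)--(iii) are correct and correspond to the swap arguments the paper also uses to constrain a maximum $\{Y,E\}$-tiling $\mathcal T$. But what follows is not a proof, and it diverges from the paper's method in a way that leaves the central work undone.

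The paper's actual argument is a direct double count. After normalizing to $\alpha=1/7$ and padding $H$ with edges so that $4m_1+3m_2=4\alpha n$ exactly, it classifies every edge of $H$ by how many of its vertices lie in $U$ versus in tiling components, and by which \emph{types} of components (edge-type $E_i$ or $Y$-type $Y_j$) those vertices sit in. The quantities $y_1,\dots,y_4$ package up the $(2,1)$- and $(3,0)$-edges jointly, and each $y_i$ is bounded by a careful analysis of auxiliary tripartite graphs $G_T$ and $Q_T$ attached to each triple of components $T$, via several small combinatorial facts (Facts~\ref{f2}--\ref{f5}) and an explicit five-case analysis (culminating in Claims~\ref{c1}--\ref{c5}). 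Summing everything yields $e(H)\le \frac{127}{7^3\cdot 6}n^3 + O(n^2)=\binom n3-\binom{n-n/7}3+O(n^2)$, which contradicts the hypothesis. This is where essentially all the content of the theorem lives, and none of it appears in your proposal.

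Your second and third paragraphs instead sketch a matching-upgrading scheme. The arithmetic there (upgrading $\ge\alpha n-2$ of the $\alpha n+1$ matching edges suffices) is fine, but the ensuing claim that the remaining ``rigid'' edges have ``zero codegree from all three of their pairs into $V\setminus V(M)$'' is not what maximality gives you: a maximal greedy upgrade only tells you that the leftover edges cannot be extended using vertices \emph{not already consumed} by earlier upgrades, which is weaker and does not immediately yield the codegree-zero structure you want. More seriously, everything after that is not an argument: ``aligning $M$ with $\mathcal T$ through an augmenting-path-style re-pairing,'' converting rigidities into a ``structural deficit,'' and the closing ``stability-type dichotomy'' are names for steps, not steps. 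You concede in the last paragraph that the dichotomy is the crux and leave it unargued. The paper proves no such dichotomy and does not go through stability at all --- it obtains the bound on $e(H)$ directly by counting. So the proposal has a genuine gap: the key quantitative upper bound on $e(H)$ in terms of the maximum tiling is never derived, and the route offered in its place is neither completed nor shown to be viable.
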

	
Now we briefly discuss the approach that we use and other possible approaches.
We first remark that it is not clear to us how to use the powerful~\emph{shifting technique} in our context, which has been a crucial tool in studying large matchings.
By the standard regularity method, to build a large $Y$-tiling, it suffices to find a large fractional $Y$-tiling.
However, a suboptimal (pure) fractional $Y$-tiling is hard to analyze so it is not easy to improve in a greedy manner or an algorithmic way.
We choose to use the $\{Y, E\}$-tilings, a mixture of $Y$-tilings and matchings.
A large $\{Y, E\}$-tiling can be converted to a large $Y$-tiling by the regularity method, and is much easier to manipulate.

Now we start to set up the proof.
We first show that it suffices to study the case $\alpha=1/7$.
For $\alpha\in (0,1/7)$, let $H'$ be a graph obtained by adding an $ s$-set $S$ such that $n-6s = 7\alpha n$ and adding all edges which contain at least one vertex from $S$. 
	Note that \[e(H') = e(H)+\binom{n+s}{3}-\binom{n}{3} \ge \binom{n+s}{3} - \binom{n-\alpha n}{3} +O(n^2)= \binom{n+s}{3}-\binom{n+s-\frac{n+s}{7}}{3} + O(n^2).\]
	Applying the theorem with $\alpha=1/7$ to $H'$ gives a $\{Y, E\}$-tiling that covers more than $\frac{4}{7}(n+s)$ vertices. Deleting the copies of $Y$ and $E$ that contain vertices from $S$ gives a $\{Y, E\}$-tiling of $H$ that covers more than $\frac{4}{7}(n+s)-4s = \frac{4}{7}(n-6s) = 4\alpha n$ vertices.
	Therefore, it suffices to show Theorem~\ref{edge th1} for $\alpha=1/7$.
	
		Suppose  $ \cT=\{Y_1,Y_2,\dots,Y_{m_{1}},E_1,E_2,\dots,E_{m_{2}}\} $ is a maximum $\{Y,E\}$-tiling of $H$, that is, a $\{Y,E\}$-tiling covering the maximum size of vertices. 
	By adding edges to $H$ if necessary, we may assume $4m_1+3m_2=4\alpha n=\frac{4}{7}n$. 
	We denote by $U$ the set of vertices not covered by $\cT$. Obviously, there are no edges in $U$ and $|U|=\frac{3}{7}n.$ Thus we have $|U|\ge 3m_1$ and $|U|\ge \frac{9}{4}m_2$. Based on forbidden structures, we will bound the  number of edges in $H$ from above and derive a contradiction. 
	
	According to the size of intersections with $U$, 	we classify the edges into three categories: $(1,2)$ edges, $(2,1)$ edges and $(3,0)$ edges. Specifically, an edge is called an $(i,j)$ edge if it contains exactly $i$ vertices covered by $\cT$ and $j$ vertices in  $U$ (note that there is no $(0,3)$ edge). Denote by $D_{i}$ the set of all $(i,3-i)$ edges for $i\in[3]$.
	Note that \begin{equation}\label{eqno1_1}
		e(H)=|D_{1}|+|D_{2}|+|D_{3}|.
	\end{equation}
	To get an upper bound of $ e(H) $,  we calculate $|D_{i}|$ separately. 
	
	First, we consider the  (1,2) edges and show
	 \begin{equation}\label{eqno1_2}
		|D_{1}|\le m_1 \binom{|U|}{2}+3m_2|U|/2.
	\end{equation}
	
	\begin{proof}
		A (1,2) edge has exactly one vertex covered by $\cT$ that is in a $Y_{i}$ or an $E_{j}$. Then we fix a $Y_{i}$ or an $E_{j}$, where $i\in [m_1]$ and $j\in [m_2]$, and estimate the number of such edges.
		
		Denote the edges  of $Y_{i}$ by  $xyz,yzw$. Let $ Q' $ be the subgraph of $H[V(Y_i)\cup U]$ consisting of all (1,2) edges. We claim $\deg_{Q'}(x)=\deg_{Q'}(w)=0$. If not, the (1,2) edge containing the vertex $x$, together with $yzw$ forms a matching of size two in $H$, contradicting the maximality of $\cT$. Similarly, two disjoint edges in $Q'$ are also impossible, from which we bound the number of edges containing $ y $ or $ z $ as follows. 
		Indeed in $ Q' $, if one of $y,z$ is contained in more than $2|U|$ edges (and less than $\binom{|U|}{2}$ edges), then the degree of the other one is zero. Otherwise both  $ \deg_{Q'}(y)$ and $\deg_{Q'}(z) $ are at most $2|U|$, then we get 
		\[
		\deg_{Q'}(y)+\deg_{Q'}(z)\le 4|U| \le \binom{|U|}{2},
		\]
		as $|U|=3n/7$ is large. 
		Therefore, 
		\[
		e(Q')=\deg_{Q'}(x)+\deg_{Q'}(y)+\deg_{Q'}(z)+\deg_{Q'}(w)\le \binom{|U|}{2}.
		\]
		Denote the vertex set of $E_{j}$ by $\{u,g,h\}$. Let $ Q'' $ be the subgraph of $H[\{u,g,h\}\cup U]$ consisting of all (1,2) edges. We observe $\deg_{Q''}(v)\le |U|/2$, $v\in \{u,g,h\}$. Suppose instead, there exist two edges  $vu_{1}u_{2},vu_{1}u_{3}$, where $ u_{1},u_{2},u_{3}\in U $, forming a copy of $Y$, contradicting the maximality. Hence \[e(Q'')=\deg_{Q''}(u)+\deg_{Q''}(g)+\deg_{Q''}(h)\le 3|U|/2.\]
		Summing over all $  Y_i$ and  $E_{j}$ of $\cT$, we have $|D_{1}| \le m_1 \binom{|U|}{2}+3m_2|U|/2$
		and complete the proof.
	\end{proof}
	
	The following three facts will be useful in our proof.
	\begin{fact}
		[\cite{Markstr2011Perfect}, Fact 1]
		\label{f2}
		For all integers $k\ge1, n\ge2$, and $1\le t \le n-1$, the maximum number of edges in a $k$-partite $k$-graph with $n$ vertices in each class and no matching of size $t+1$ is $tn^{k-1}$.
	\end{fact}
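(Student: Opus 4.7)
My plan is a short probabilistic/averaging argument that gives the bound immediately. The lower bound is the natural construction: pick any $t$ vertices in $V_1$ and include every $k$-edge through at least one of them; this yields exactly $tn^{k-1}$ edges, and the matching number is exactly $t$ since the chosen vertices form a vertex cover while any $t$ pairwise-disjoint edges through them realise a matching of size $t$.

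For the upper bound, I would consider a uniformly random perfect matching $\Pi$ of the complete $k$-partite $k$-graph $K^{(k)}_{n,\ldots,n}$ on $V_1\cup\cdots\cup V_k$. Such matchings are in bijection with $(k-1)$-tuples of bijections $V_1\to V_j$ for $j=2,\ldots,k$, so there are exactly $(n!)^{k-1}$ of them. For any fixed $k$-tuple $e=(a_1,\ldots,a_k)\in V_1\times\cdots\times V_k$, the perfect matchings of $K^{(k)}_{n,\ldots,n}$ containing $e$ correspond to $(k-1)$-tuples of bijections $V_1\setminus\{a_1\}\to V_j\setminus\{a_j\}$, of which there are $((n-1)!)^{k-1}$. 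Hence $\Pr[e\in\Pi]=1/n^{k-1}$, and by linearity of expectation the expected number of edges of $H$ contained in $\Pi$ is exactly $|E(H)|/n^{k-1}$.

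Since $\Pi\cap E(H)$ is always a matching of $H$ (its edges are pairwise disjoint), some realisation of $\Pi$ produces a matching of $H$ of size at least $|E(H)|/n^{k-1}$. Therefore $\nu(H)\ge |E(H)|/n^{k-1}$, and combining with the hypothesis $\nu(H)\le t$ yields $|E(H)|\le tn^{k-1}$, as desired. I do not expect any real obstacle here: the whole argument reduces to two elementary counts of perfect matchings and the trivial observation that $\Pi\cap E(H)$ is a matching of $H$. The only point to keep in mind is that $\Pi$ lives in the ambient complete $k$-partite hypergraph rather than in $H$ itself, which is precisely what makes the uniform distribution available.
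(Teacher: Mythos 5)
The paper does not reprove this statement; it is imported verbatim as Fact~1 from Markstr\"om and Ruci\'nski, so there is no ``paper's own proof'' to compare against. Your argument is correct and self-contained, and it is worth giving for completeness. Both directions check out: the lower-bound construction (all edges through a fixed $t$-set in $V_1$) has exactly $tn^{k-1}$ edges and matching number exactly $t$ since that $t$-set is a vertex cover, and $t\le n-1$ ensures a matching of size $t$ exists. For the upper bound, your count of perfect matchings of $K^{(k)}_{n,\ldots,n}$ as $(n!)^{k-1}$ via bijections $V_1\to V_j$, and of those through a fixed edge as $((n-1)!)^{k-1}$, gives $\Pr[e\in\Pi]=n^{-(k-1)}$; linearity of expectation then yields $\nu(H)\ge \mathbb{E}[|\Pi\cap E(H)|]=|E(H)|/n^{k-1}$, since $\Pi\cap E(H)$ is always a matching of $H$ and $\nu(H)$ is an integer upper bound on every realisation, so $|E(H)|\le \nu(H)\,n^{k-1}\le tn^{k-1}$.

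One small remark: the randomness is not essential. The complete $k$-partite $k$-graph decomposes into $n^{k-1}$ pairwise edge-disjoint perfect matchings (identify each $V_j$ with $\mathbb{Z}/n\mathbb{Z}$ and take, for each shift vector $(s_2,\ldots,s_k)$, the matching $\{(i,i+s_2,\ldots,i+s_k): i\in\mathbb{Z}/n\mathbb{Z}\}$). If $e(H)>tn^{k-1}$, pigeonhole forces one of these matchings to contain at least $t+1$ edges of $H$. This is the deterministic version of your averaging step and is probably the intended one-line proof in the source; the two arguments are equivalent in spirit, and yours is equally valid.
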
	
	\begin{fact} \label{f3}
		Let $a,b$ be integers with $b\ge a\ge2$. Let $H$ be a $3$-partite $3$-graph on $V_1,V_2,V_3$ with $|V_1|=|V_2|=a$, $|V_3|=b$ and no matching of size $a$. Then $e(H)\le(a-1)ab$.
	\end{fact}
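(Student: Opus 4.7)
My plan is to derive Fact~\ref{f3} from Fact~\ref{f2} via a short averaging argument over all $a$-subsets of $V_3$, exploiting the fact that the inherited induced subhypergraphs have all three parts of equal size.

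Specifically, for each $a$-subset $V_3' \subseteq V_3$, consider the induced 3-partite 3-graph $H[V_1 \cup V_2 \cup V_3']$. All three parts have size $a$, and its matching number is bounded above by $\nu(H) \le a-1$, since deleting vertices cannot increase the matching number. Fact~\ref{f2}, applied with $k=3$, $n=a$, and $t=a-1$, therefore gives
\[
e(H[V_1 \cup V_2 \cup V_3']) \;\le\; (a-1)a^2.
\]

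Next, I would sum this bound over all $\binom{b}{a}$ choices of $V_3'$. Each edge $xyv \in E(H)$ with $v \in V_3$ appears in $H[V_1 \cup V_2 \cup V_3']$ exactly when $v \in V_3'$, and the number of $a$-subsets of $V_3$ containing a fixed vertex is $\binom{b-1}{a-1}$. Hence
\[
\binom{b-1}{a-1} e(H) \;=\; \sum_{V_3' \in \binom{V_3}{a}} e(H[V_1 \cup V_2 \cup V_3']) \;\le\; \binom{b}{a}(a-1)a^2,
\]
and dividing by $\binom{b-1}{a-1}$ yields
\[
e(H) \;\le\; \frac{b}{a}(a-1)a^2 \;=\; (a-1)ab,
\]
as desired.

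I do not anticipate a real obstacle here: the only thing to verify at each invocation is that $\nu(H[V_1 \cup V_2 \cup V_3']) \le a-1$ and that the three parts have the common size $a$, both immediate from the hypotheses. The hypothesis $b \ge a$ is used only to ensure $\binom{b}{a}$ is nonzero so that the averaging step is meaningful.
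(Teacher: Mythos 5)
Your proof is correct and is essentially identical to the paper's argument: both apply Fact~\ref{f2} with $k=3$, $n=a$, $t=a-1$ to each induced subgraph $H[V_1\cup V_2\cup V_3']$ with $V_3'\in\binom{V_3}{a}$, and then average over all $\binom{b}{a}$ such subsets, using that each edge is counted $\binom{b-1}{a-1}$ times. No gaps.
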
	
	\begin{proof}
		Let $ V_3=\{v_1,\dots,v_b\}$ and let $V'_{3} := \{v_{i_1},v_{i_2},\dots,v_{i_a}\} \subseteq  V_3$. 
		For $H' : = H[V_1,V_2,V'_3]$, by Fact~\ref{f2}, we get $e(H')=\sum_{j=1}^{a}\deg_{H'}(v_{i_j})\le (a-1)a^{2}$.
		Summing over all subsets of size $a$ of $V_3$ gives
		\[
		e(H)\le \frac{\binom{b}{a}(a-1)a^{2}}{\binom{b-1}{a-1}}=(a-1)ab. \qedhere
		\]
		%
	\end{proof}
		\begin{fact} \label{f5}
		Suppose $ Q=Q(V_1,V_2,V_3) $ is a $3$-partite $3$-graph with $ |V_1|=2$, $|V_2|=t\in \{3, 4\}$, $|V_3|=4 $. If $ Q $ does not have two disjoint copies of $Y$, then	$e(Q)\le 5t$.
	\end{fact}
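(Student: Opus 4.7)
The plan is to convert Fact~\ref{f5} into a statement about two bipartite graphs by splitting $Q$ along $V_1$. Write $V_1 = \{a_1, a_2\}$ and let $L_i$ denote the bipartite link of $a_i$ on $V_2 \cup V_3$, so that $e(Q) = |L_1| + |L_2|$. A copy of $Y = Y_{3,2}$ in $Q$ (two edges meeting in two vertices) is either of ``type (i)'' --- two edges $(a_1,b,c), (a_2,b,c)$ sharing a pair $(b,c) \in V_2 \times V_3$ --- or corresponds to two edges of a single $L_i$ sharing one vertex, i.e.\ a cherry (a path of length two) inside $L_i$. Since $|V_1|=2$, a type (i) copy already uses both of $a_1,a_2$, so two vertex-disjoint copies of $Y$ can contain no type (i); hence they correspond exactly to two vertex-disjoint cherries, one in $L_1$ and one in $L_2$.

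I would then argue by contradiction, assuming $|L_1|+|L_2| > 5t$. If one of $L_1,L_2$ contains no cherry, then it is a matching in a bipartite graph with parts of sizes $t$ and $4$, so has at most $\min(t,4)=t$ edges; the other link has at most $4t$ edges, giving $e(Q) \le 5t$, a contradiction. So both links contain cherries. Fix a cherry $P_1 \subseteq L_1$; by the assumption, every cherry of $L_2$ meets the three-vertex set $V(P_1)$, so the subgraph $L_2 - V(P_1)$ is cherry-free and hence a matching of bounded size. This yields a bound of the form $|L_2| \le (\text{matching number on the remainder}) + (\text{edges of } L_2 \text{ incident to } V(P_1))$, and symmetrically one obtains a bound for $|L_1|$ by fixing a cherry $P_2 \subseteq L_2$.

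The main obstacle lies in making these two symmetric bounds tight enough: the naive estimate of edges incident to three fixed vertices in $V_2 \cup V_3$ is too loose to contradict $|L_1|+|L_2|>5t$. To sharpen it, I would pick $P_1,P_2$ cleverly (for example centered at a vertex of minimum degree in the opposite link) and split on whether each cherry is $V_2$-centered (one $V_2$-vertex and two $V_3$-vertices) or $V_3$-centered. For $t=3$, a vertex-count shortcut simplifies the analysis: two disjoint cherries cannot both be $V_3$-centered, since together they would require $2+2=4 > |V_2|$ distinct $V_2$-vertices, so at least one chosen cherry is $V_2$-centered. For $t=4$ both centerings are admissible, and the verification reduces to checking a short list of degree-sequence configurations, kept finite by the small sizes $|V_2|=|V_3|=4$ and the fact that under the assumption each link has density exceeding $5/8$, which is the crux of the argument.
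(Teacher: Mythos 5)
Your reformulation --- encoding the two links $L_1, L_2$ as a red/blue coloring of a bipartite graph on $V_2 \cup V_3$ and observing that two disjoint copies of $Y$ in $Q$ must come from two disjoint cherries (paths of length two) of opposite colors, since any ``type (i)'' copy already uses both vertices of $V_1$ --- is exactly the reduction the paper uses. The easy cherry-free case (a matching, hence at most $\min(t,4)=t$ edges) is also handled correctly. But the central step is missing. Once both links have cherries, you propose to fix a cherry $P_1 \subseteq L_1$, bound $|L_2|$ by a matching on $L_2 - V(P_1)$ plus the $L_2$-edges meeting $V(P_1)$, sharpen by choosing $P_1$ cleverly, and then ``check a short list of degree-sequence configurations.'' You acknowledge the naive version of this is too weak and never actually carry out the sharpened case analysis, so the proof is incomplete precisely at its crux.

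The claim that ``under the assumption each link has density exceeding $5/8$'' is also unsupported and in fact false as a starting point: with $t=4$ and $e(Q) \ge 21$ one could a priori have $|L_1| = 16$ and $|L_2| = 5$, so $L_2$ has density $5/16$; all one can deduce from both links containing cherries is $|L_i| \ge 5$. The paper sidesteps the fix-a-cherry scheme entirely with a cleaner per-vertex count: assuming $e(Q)\ge 5t+1$ and WLOG $e_r \ge e_b$ (so $e_r \ge 8$, $e_b \ge t+1$), it shows $\deg_b(h_i) \le 2$ and $\deg_r(h_i) \le 3$ for every $h_i \in V_2$, each via a short argument about forbidden disjoint bichromatic cherries; summing over $i$ gives $e(Q) = \sum_i \bigl(\deg_r(h_i)+\deg_b(h_i)\bigr) \le 5t$, a contradiction. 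If you want to complete your argument, aiming for per-vertex degree bounds of this kind is likely to be far less delicate than the cherry-fixing case split.
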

	\begin{proof}
		
		Let $ V_{1}=\{a_{1},a_{2}\}$, $V_{2}=\{h_{1},\dots,h_t\}$ and $V_{3}=\{g_{1},g_{2},g_{3},g_4\}$. 
		Let $G:=G[V_{2},V_{3}]$ with $ E(G)=E_r(G)\cup E_b(G) $, where $ E_r(G)=\{h_ig_j:a_{1}h_ig_j\in E(Q)\}$, $E_b(G)=\{h_ig_j:a_{2}h_ig_j\in E(Q)\}$, and we color $ E_r(G) $ red, $ E_b(G) $ blue. Note that $ e(Q)=e(G)=e_r(G)+e_b(G)$, where $e_r(G):=|E_r(G)|$ and $e_b(G):=|E_b(G)|$. 
		The assumption that $ Q $ does not have two disjoint copies of $Y$ implies that there are no two disjoint  monochromatic copies of  $ P_3 $ with different colors in $G$.
		
		Towards a contradiction, suppose that $e(Q)\ge 5t+1\ge 16$. 	Without loss of generality,  suppose  $e_r(G)\ge e_b(G)$, then $ 4t \ge e_r(G) \ge8$ and $ e_b(G) \ge t+1$.	
		Denote $\deg_b(v)$ ($\deg_r(v)$) by the number of blue (red) edges containing $ v $ in $ G $. 
		We claim that for every $h_i$, \textbf{(A)} $\deg_{b}(h_i)\le 2$ and \textbf{(B)} $\deg_{r}(h_i)\le3$.
		
		\textbf{(A)} Suppose  $\deg_{b}(h_1) \ge 3$, 
		then every $g_j$ has at most one neighbor other than $h_1$ in $ E_r(G) $, i.e. $\deg_{r}(g_j)\le2$, because of the absence of two disjoint  monochromatic copies of $ P_3 $ with different colors. 
		Since $ e_r(G) \ge8 $, we have $\deg_{r}(g_j)=2$ for every $j$. 
		Thus, $ e_r(G)=8 $ and  $\deg_{r}(h_1)=4$. 
		Now for every $h_i$, $i>1$, we have $\deg_{b}(h_i)\le1$, 
		as otherwise $h_i$ gives a blue $P_3$ and $h_1$ gives a vertex-disjoint red $P_3$ (due to $\deg_{r}(h_1)=4$), a contradiction.
		This implies that $ e_b(G) \le t-1+4=t+3 $, and $e(G)=e_b(G)+e_r(G)\le t+3+8<5t+1$, a contradiction.
		
		\textbf{(B)} Suppose $\deg_{r}(h_1)=4$, implying that $\deg_b(h_i)\le 1$ for $i\ge 2$, due to the absence of disjoint $P_3$ with different colors.
		Thus, $ e_b(G) \le t-1+2=t+1$ (using $\deg_b(h_1)\le 2$ from \textbf{(A)} and $\deg_b(h_i)\le 1$ for $i\ge 2$). 
		This implies that $e_b(G)=t+1$ and $e_r(G)=4t$ and there are two disjoint $P_3's$ with different colors, a contradiction.
		
		However, as $e(G)\ge5t+1$, there exists $i$ such that $\deg_{r}(h_i)+\deg_{b}(h_i)\ge6$, a contradiction.
	\end{proof}
	
	The analysis of $(2,1)$ and $(3,0)$ edges are much more involved.
	For that we introduce the following notation.
	Recall that $ \cT=\{Y_1,Y_2,\dots,Y_{m_{1}},E_1,E_2,\dots,E_{m_{2}}\} $.
	Let $ YYU $ represent the set of the edges $\{y_1,y_2,u\}$, $y_1\in V(Y_i)$, $y_2\in V(Y_j)$, $u\in U $, $ i\neq j $. 
	Let $ EYU $ represent the set of the edges $\{x,y,u\}$, $ x\in V(E_i)$, $y\in V(Y_j)$, $u\in U $ and let $ EEU $ be the set of the edges $\{x_1,x_2,u\}$, $ x_1\in V(E_i)$, $x_2\in V(E_j)$, $u\in U $, $ i\neq j $.  
	Note that the number of $(2,1)$ edges that have two vertices covered by a single $E_i$ or $Y_{j}$ of $\cT$ is $ O(n^2) $. 
	Hence 
	\[|D_{2}|=|YYU|+|EYU|+|EEU|+O(n^2).\]
	
	For (3,0) edges, we denote by $ EEY $ the set of the edges $\{x_1,x_2,y\}$, $x_1\in V(E_i)$, $x_2\in V(E_j)$, $y\in V(Y_k) $, $ i\neq j $, and define $EEE$, $EYY$ and $ YYY $ analogously. Other than these four types of edges, the number of remaining $(3,0)$ edges is $ O(n^2) $. Thus   \[|D_{3}|=|EEE|+|EEY|+|EYY|+|YYY|+O(n^2). \]
	
	Similar analysis as in the proof of~\eqref{eqno1_2} would give a good estimate on $|D_2|$, but this will leave $|D_3|$, the number of $(3,0)$ edges, hard to analyze.
	To overcome this difficulty we consider $ |D_{2}|+|D_{3}| $ at the same time, by setting 
	\[\begin{split}
		&y_{1}:= 2|EEE|+|EEU|,\\
		&y_{2}:= 2|EEY|+|EEU|+|EYU|,\\
		& y_{3}:= 2|EYY|+|EYU|,\\
		& y_{4}:= 2|YYY|+2|YYU|.
	\end{split}\]
	Note that \begin{equation}
		2(|D_{2}|+|D_{3}|)=y_{1}+y_{2}+y_{3}+y_{4}+O(n^2)
	\end{equation}
	and we are able to bound each $y_i$ appropriately.
	More precisely, we shall prove the following:
	\begin{align*}
		&y_{1}\le  38\binom{m_2}{3}+3\binom{m_2}{2}|U|+O(n^2)\tag{6.4}\label{eqno1_4},\\
		&y_{2}\le 48\binom{m_2}{2}m_1+3\binom{m_2}{2}|U|+6m_1m_2|U|+O(n^2)\tag{6.5},\\
		&y_{3}\le 60\binom{m_1}{2}m_2+6m_1m_2|U|+O(n^2)\tag{6.6},\\
		&y_{4}\le  74\binom{m_1}{3}+14\binom{m_1}{2}|U|+O(n^2).	\tag{6.7}\label{eqno1_7}
	\end{align*}
	Combining \eqref{eqno1_1}-\eqref{eqno1_7}, we get
	\begin{align*}\label{eq1_9}
		e(H)&=|D_{1}|+|D_{2}|+|D_{3}|\\
		&\le m_1 \binom{|U|}{2}+7\binom{m_1}{2}|U|+3\binom{m_2}{2}|U|+6m_1m_2|U|\\&+37\binom{m_1}{3}+19\binom{m_2}{3}+30\binom{m_1}{2}m_2+24\binom{m_2}{2}m_1+O(n^2). \tag{6.9}
	\end{align*}
	Next we write $M_1=4m_1$, $M_2=3m_2$ and $|U|=3n/7=(3/4)(M_1+M_2)$, we rewrite all terms above with $M_1$ and $M_2$. After collecting terms we obtain
	\[\label{6.10}
	e(H)\le \frac{127}{64\times 6}(M_1+M_2)^3 + O(n^2) = \frac{127}{7^3\times 6}n^3+O(n^2) = \binom{n}{3}-\binom{n- n/7}{3}+O(n^2),
\tag{6.10}	\]
	where we used $M_1+M_2=4n/7$. (See Appendix B for a proof of this inequality).
	
	Now it remains to prove \eqref{eqno1_4}-\eqref{eqno1_7}.

	\def\EE{\tt EE}
	\def\EY{\tt EY}
	\def\YY{\tt YY}
	
	\subsection{More notation and tools}
	We need some more notation.
	Fix a triple $T=\{ P,S,R\}$ , where $P,S,R\in \cT$. Let $V(P)=V_{1},V(S)=V_{2},V(R)=V_{3} $. Let $Q_{T}$ be the  induced tripartite subgraph of $H$  on $(V_{1},V_{2},V_{3}) $, and let $G_{T}$ be the tripartite graph on $(V_{1},V_{2},V_{3}) $ with $(i, j)\in E(G_T)$ if and only if there are at least eight vertices $u$ in $U$, such that $ uij\in E(H) $, where $i\in V_{p}$, $j\in V_{q}$, $p,q\in[3]$ and $p\neq q$.  Let $EE$, $YY$ and $EY$ represent the edges of $G_T$ that are between two edges in $\cT$, between two copies of $Y$ and between an edge in $\cT$ and a copy of $Y$, respectively.
	Denote the number of these edges by $|E_{\EE}(G_T)|$, $|E_{\YY}(G_T)|$ and $|E_{\EY}(G_T)|$.
	Thus we have $ e(G_T)=|E_{\EE}(G_T)|+|E_{\YY}(G_T)|+|E_{\EY}(G_T)|.$
	
	We denote by a star a graph all of whose edges share a common vertex that we call a center. We prove the following properties of $G_T$ and $Q_T $.
	
	\begin{fact} \label{f4}
		Fix a triple $T=\{ P,S,R\}$ of $ \cT $. Let $G=G_T$ as defined above. If $	|V_{1}|=3$, then $ G[V_{1},V_{2}] $ is trivial or a star, and $e(G[V_{1},V_{2}])\le|V_{2}|$. 
		If $|V_{1}|=|V_{2}|=4$, then $G[V_{1},V_{2}] $ has no matching of size three and $e(G[V_{1},V_{2}])\le8$. 
	\end{fact}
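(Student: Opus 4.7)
The plan is to prove each structural claim by a \emph{swap argument}: if the stated matching bound on $G[V_1,V_2]$ fails, we construct from $\cT$ a strictly larger $\{Y,E\}$-tiling by replacing $\{P,S\}$ with new vertex-disjoint copies of $Y$ or edges that reach into $U$, contradicting the maximality of $\cT$. The engine is the following lifting: every edge $(i,j)$ of $G_T$ comes with at least $8$ completing vertices $u\in U$ satisfying $uij\in E(H)$, so any matching of size at most $3$ in $G[V_1,V_2]$ can be lifted to the desired vertex-disjoint configuration in $H$ with pairwise distinct $u$'s (greedy choice succeeds because we need at most $4$ such $u$'s in total and each pair has $8$ choices).

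Suppose first that $|V_1|=3$ and, for contradiction, that $G[V_1,V_2]$ contains a matching $\{i_1j_1,i_2j_2\}$ of size two. Lift to four distinct $u_1,u_2,u_3,u_4\in U$ with $u_1i_1j_1,\,u_2i_1j_1,\,u_3i_2j_2,\,u_4i_2j_2\in E(H)$, so that $Y^{(1)}:=\{u_1,u_2,i_1,j_1\}$ and $Y^{(2)}:=\{u_3,u_4,i_2,j_2\}$ are two vertex-disjoint copies of $Y$. Since $V(Y^{(1)})\cup V(Y^{(2)})\subseteq V_1\cup V_2\cup U$, these copies are disjoint from every element of $\cT\setminus\{P,S\}$, so $\cT':=(\cT\setminus\{P,S\})\cup\{Y^{(1)},Y^{(2)}\}$ is a valid $\{Y,E\}$-tiling. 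A vertex count gives $|V(\cT')|=|V(\cT)|-(3+|V_2|)+8=|V(\cT)|+5-|V_2|$, an increase of $2$ when $|V_2|=3$ and of $1$ when $|V_2|=4$, contradicting the maximality of $\cT$. Hence $G[V_1,V_2]$ has no two vertex-disjoint edges, so by König's theorem all edges share a common vertex; $G[V_1,V_2]$ is therefore empty or a star, and either way $e(G[V_1,V_2])\le\max\{|V_1|,|V_2|\}=|V_2|$.

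For $|V_1|=|V_2|=4$, suppose $G[V_1,V_2]$ has a matching $\{i_1j_1,i_2j_2,i_3j_3\}$ of size three. Lift to distinct $u_1,u_2,u_3\in U$ with $u_si_sj_s\in E(H)$ for $s=1,2,3$, yielding three vertex-disjoint edges $e_s:=\{u_s,i_s,j_s\}\subseteq V_1\cup V_2\cup U$. Replacing $\{P,S\}\subseteq\cT$ by $\{e_1,e_2,e_3\}$ produces a valid $\{Y,E\}$-tiling of coverage $|V(\cT)|-8+9=|V(\cT)|+1$, contradicting maximality. Thus the matching number of $G[V_1,V_2]$ is at most $2$, and König's theorem provides a vertex cover of size at most $2$; since each part has size $4$, every covering vertex is incident to at most $4$ edges, giving $e(G[V_1,V_2])\le 2\cdot 4=8$.

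There is no substantial obstacle beyond bookkeeping: the proof reduces to two careful vertex-coverage counts (one per case) together with the elementary verification that the $8$-vertex slack in the definition of $G_T$ is more than enough to realize the pairwise distinct $u$'s demanded by the swap. The only pitfall is forgetting that the replacement pieces automatically avoid $\cT\setminus\{P,S\}$ because they live in $V_1\cup V_2\cup U$, which is disjoint from the remaining tiles by definition of $\cT$ and $U$.
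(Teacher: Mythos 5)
Your proof is correct and uses essentially the same swap argument as the paper: assume a too-large matching in $G[V_1,V_2]$, lift each matching edge into $H$ via distinct vertices of $U$ (exploiting the $8$-vertex slack in the definition of $G_T$), and swap out $P,S$ for the lifted pieces to contradict the maximality of $\cT$, then finish with K\H onig's theorem. The only (harmless) deviation is that in the $|V_1|=|V_2|=4$ case you lift the size-$3$ matching to three vertex-disjoint \emph{edges} rather than three copies of $Y$ as the paper does; this needs only $3$ auxiliary $u$-vertices instead of $6$ but the contradiction is the same.
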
	
	\begin{proof}
		Suppose that $|V_{1}|=3$ and the nontrivial graph $G[V_{1},V_{2}] $ is not a star, then it contains two disjoint edges $e_1$, $e_2$. 
		By the definition of $G$, we can find vertices $u_1,\dots, u_4$ such that $e_1u_1, e_1u_2$, $e_2u_3, e_2u_4\in E(H)$, which gives two disjoint copies of $Y$ on $V_{1} \cup V_{2} \cup U$.
		Thus, we can replace $P$ and $S$ by these two copies of $Y$, and since $|V_1|=3$, the resulting $\{Y, E\}$-tiling is larger than $\cT$, a contradiction. 
		Thus $ G[V_{1},V_{2}] $ is a star or is trivial and thus $e(G[V_{1},V_{2}])\le|V_{2}|$.
		
		Suppose $|V_{1}|=|V_{2}|=4$. 
		We claim that $G[V_{1},V_{2}]$ has no matching of size three. Suppose instead, then similarly we can find three disjoint copies of $Y$ on $V_{1} \cup V_{2} \cup U$.
		Replacing $P$ and $S$ by them gives a larger $\{Y, E\}$-tiling, a contradiction. 
		By K\H onig Theorem\cite{2000Graph}, $G[V_{1},V_{2}]$ has a vertex cover of size at most two, implying that $ e(G[V_{1},V_{2}])\le 8 $.
	\end{proof}
	
	Note that $ G_{T}[V_{1},V_{2}] $ is a star or the union of two stars. We call a center of a star in  $ G_{T}[V_{1},V_{2}] $, a center of $G_{T}[V_{1},V_{2}] $. Then $G_{T}[V_{1},V_{2}] $ has at most two centers.

	\noindent\textbf{Bound} \bm{$y_1$}.
Recall $y_{1}:= |EEU|+2|EEE|$.
	We show
	\[y_{1} \le  38\binom{m_2}{3}+3\binom{m_2}{2}|U|+O(n^2).\]
	
	\begin{proof}
		Without loss of generality, consider a triple $\{E_1,E_2,E_3\}$ in $\cT$.  Let $ V_{1}=V(E_1)=\{a_{1},a_{2},a_{3}\}$, $V_{2}=V(E_2)=\{b_{1},b_{2},b_{3}\}$, $V_{3}=V(E_3)=\{c_{1},c_{2},c_{3}\}$. 
		Let $G=G_T$ and $Q=Q_T$ as defined above.  We consider three cases here.
		
		\textbf{Case 1.} $ e(G)=0 $. Obviously, $ e(Q)\le 27 $.
		
		\textbf{Case 2.} $ e(G)\ge1 $ and $G$ has no matching of size two. 
		
		Note that $G$ is a star, and $ e(G)\le6$. 
		Let $a_{1}b_{1}\in E(G)$ and $Q':=Q-\{a_{1},b_{1}\}$. Since $\{a_{1}b_{1}\}\cup U$ has a copy of $Y$ and $\cT$ is a maximum $ \{Y,E\} $-tiling, $Q'$ has no matching of size two. By Fact~\ref{f3}, we get 	$ e(Q')\le 6 $.
		
		Let $ E_{a_{1},b_{1}} =\{xyz\in E(Q):\{x,y,z\}\cap\{a_{1},b_{1}\}\not= \emptyset\}$.
		Note that $ |E_{a_{1},b_{1}}|\le15. $ Thus we get
		\[e(Q)= e(Q')+|E_{a_{1},b_{1}}|\le 6+15=21.\]
		
		\textbf{Case 3.} $G$ has a matching of size two.
		
		By Fact~\ref{f4}, we know that $ G[V_{i},V_{j}] $ is a star and $e(G[V_{i},V_{j}])\le3$, where $i,j\in[3]$ and $i\not= j$. So $ e(G)\le9$.  Without loss of generality, we may assume that  $a_{1}b_{1},b_{2}c_{2}\in E(G)$. By the maximality of $\cT$, there is no edge in $Q-\{a_{1},b_{1}, b_{2},c_{2}\}$.
		Similar as Case 2 by  Fact~\ref{f3}, we have $e(Q-\{a_{1},b_{1}\})\le6$ and $e(Q-\{b_{2},c_{2}\})\le6$. 
		Let 
		\[E_{pq} := \{pqv\in E(Q): p\in V_i, q\in V_j, v\in V_k, \{i,j,k\}=[3] \}.\]
		Note that 
		$|E_{a_{1}b_{2}}| \le3$, $|E_{a_{1}c_{2}}|\le 3$ and $|E_{b_{1}c_{2}}|\le 3$.	
		Thus	
		\[\begin{split}e(Q)&=|E(Q-\{a_{1},b_{1}\})\cup E(Q-\{b_{2},c_{2}\})|+ |E_{a_{1}b_{2}} \cup E_{a_{1}c_{2}} \cup E_{b_{1}c_{2}}|
			\\&\le 6\times2+3\times3-2=19.
		\end{split}\]
		
		Note that each triple $\{E_i, E_j, E_k\}$ in $\cT$ satisfies one of the cases above.
		Suppose the number of triples in three cases above are $x_1$, $x_2$ and $x_3$ respectively, then $x_1+x_2+x_3=\binom{m_2}{3}$. 
		Note that we can bound the number of $EEU$ edges by $\frac{1}{m_2-2}(6x_2+9x_3)|U|+O(n^2)$, because every $EEU$ edge counted by $G$ is counted in exactly $m_2-2$ triples (so computed $m_2-2$ times) and the number of $EEU$ edges not reflected in $G$ is at most $ 7\times 9 \binom{m_2}{2}$.
	Thus we have
		\[\begin{split}y_{1}&\le 	2 (27x_1+21x_2+ 19x_3)+\frac{1}{m_2-2}(6x_2+9x_3)|U|+O(n^2) \\
			&\le 38\binom{m_2}{3}+16x_1+4x_2+3\binom{m_2}{2}|U|-\frac{9x_1+3x_2}{m_2-2}|U|+O(n^2).
		\end{split}\]
		We complete the proof by $\frac{|U|}{m_2-2}\ge \frac{|U|}{m_2}\ge \frac{3n/7}{4n/21}=9/4$, which gives $16x_1+4x_2-\frac{9x_1+3x_2}{m_2-2}|U|<0$.
	\end{proof}
	
	\noindent\textbf{Bound} \bm{$y_2$}.
 Recall 
	$y_{2}:= 2|EEY|+|EEU|+|EYU|.$ 
	We show \[y_{2}\le 48\binom{m_2}{2}m_1+3\binom{m_2}{2}|U|+6m_1m_2|U|+O(n^2).\]
	\begin{proof}	
		Without loss of generality, consider $E_1,Y_2,E_3\in \cT$. Let $ V_{1}=V(E_1)=\{a_{1},a_{2},a_{3}\}$, $V_{2}=V(Y_2)=\{h_{1},h_{2},h_{3},h_4\}$ and $V_{3}=V(E_3)=\{c_{1},c_{2},c_{3}\}$. Let $G=G_T$ and $Q=Q_T$ as defined before. By Fact~\ref{f4}, we get $|E_{\EY}(G)|\le8$ and $|E_{\EE}(G)|\le3$. 
		In this proof we shall use the fact that there do not exist an edge $e$ of $G$, a $3$-edge and a copy of $Y$ in $Q$, all of which are pairwise disjoint -- since $e$ can be extended to a copy of $Y$ by two vertices in $U$, we may replace $E_1,Y_2,E_3$ by these two copies of $Y$ and one edge, contradicting the maximality of $\cT$.  
		Consider the following three cases.
		
		\textbf{Case 1.} $ |E_{\EY}(G)|=0 $. We use the trivial bound $ e(Q)\le 36 $.
		
		\textbf{Case 2.} $ |E_{\EY}(G)|\ge1 $ and $G[V_{1},V_{2}]\cup G[V_{2},V_{3}]$ has no matching of size two. 
		
		Note that $G[V_{1},V_{2}]\cup G[V_{2},V_{3}]$ is a star and thus $ |E_{\EY}(G)|\le6. $ Let $ a_1h_1\in E(G) $. By the maximality of $\cT$, the graph $Q':=Q-\{ a_1,h_1\}$ does not have an edge and a copy of $Y$ that are disjoint. Then we claim $ e(Q')\le 12$. 
		If not, without loss of generality, we suppose $ \deg_{Q'}(c_{3})\ge \deg_{Q'}(c_{2})\ge \deg_{Q'}(c_{1}) $, then $ 5\le \deg_{Q'}(c_{3})\le6$. 
		We may assume $ a_{2}h_{i}c_{3}\in E(Q') $ for every $i\in\{2,3,4\}$.
		Since 
		\[
		\deg_{Q'}(c_{2})+ \deg_{Q'}(c_{1})\ge7>6\ge |\{a_2h_ic_j\in E(Q'):i\in \{2,3,4 \} ,j\in \{1,2\}\}|,
		\]
		we can pick an edge $e\in Q'$ such that $a_3\in e$ and $c_{3}\notin e$. 
		By our assumption $c_{3}$ together with the remaining vertices in $ V_{1}$ and $V_{2} $ forms a copy of $Y$ in $Q'$ (disjoint with $e$), a contradiction.
		
		Let $ E_{a_{1},h_{1}} =\{xyz\in E(Q):\{x,y,z\}\cap\{a_{1},h_{1}\}\not= \emptyset\}$.
		Note that $ |E_{a_{1},h_{1}}|\le18$. Thus we get
		\[e(Q)= e(Q')+|E_{a_{1},h_{1}}|\le 12+18=30.\]

		\textbf{Case 3.} $G[V_{1},V_{2}]\cup G[V_{2},V_{3}]$ has a matching of size two. 	
		
		Suppose $a_{1}h_{1}, h_{2}c_{2}\in E(G)$ and $ Q':=Q-\{a_{1},h_{1}\} $. Note that $E(Q-\{a_{1},h_{1}, h_{2},c_{2}\})=\emptyset$. Then for any $e \in E(Q')$, $e \cap \{h_{2},c_{2}\}\not=\emptyset$. We claim that $e(Q')\le8$. Indeed, suppose that $ e(Q')\ge 9 $ and $ \deg_{Q'}(h_{2})\ge \deg_{Q'}(c_{2}) $, then  $ 5\le \deg_{Q'}(h_{2})\le 6 $ and $ \deg_{Q-\{a_{1}, h_{1}, h_2\}}(c_{2})\ge 3$. Take a copy of $Y$ containing $ h_{2}, c_1, c_3 $ in $ Q' $. 
		Since $ \deg_{Q-\{a_{1}, h_{1}, h_2\}}(c_{2})\ge 3$, one can take an edge containing $c_2$ disjoint with the copy of $Y$ above, a contradiction. 
		Similar arguments give $e(Q-\{h_{2},c_{2}\})\le8$.
		
		Similar as Case 3 in the proof of \eqref{eqno1_4} to estimate the number of the remaining edges in $Q$, we get
		$|E_{a_{1}h_{2}}|\le 3$, $|E_{a_{1}c_{2}}|\le 4$ and $|E_{h_{1}c_{2}}|\le 3$.
		Then 	
		\[\begin{split}
			e(Q)&=|E(Q-\{a_{1}, h_{1}\})\cup E(Q-\{h_{2}, c_{2}\})|+ |E_{a_{1}h_{2}}\cup E_{a_{1}c_{2}} \cup E_{h_{1}c_{2}}|
			\\&\le 8\times2+3+4+3-2=24.
		\end{split}\]
		
		Suppose that the number of triples of $ \cT $ in the three cases above are $x_1$, $x_2$ and $x_3$ respectively, then $x_1+x_2+x_3=m_1\binom{m_2}{2}$. Note that by counting edges via triples e.g. $E_1,Y_2,E_3$, all $EYU$ edges reflected by $G$ are counted exactly $m_2-1$ times. Putting everything together we get 
		\[\begin{split}
			y_{2}&\le 	2 (36x_1+30x_2+ 24x_3)+3\binom{m_2}{2}|U|+\frac{1}{m_2-1}(6x_2+ 8x_3)|U|+O(n^2) \\
			&\le 48\binom{m_2}{2}m_1 + 24x_1 + 12 x_2 +3\binom{m_2}{2}|U|+6m_1m_2|U|-\frac{12x_1+6x_2}{m_2}|U|+O(n^2),
		\end{split}\]
		where the error term is the number of the remaining edges in $ EEU $ and $ EYU $ not reflected in $G$.
		We obtain the desired bound on $y_2$ because $24x_1 + 12 x_2 -\frac{12x_1+6x_2}{m_2}|U|<0$ due to  $|U|/m_2\ge 9/4$.
	\end{proof}
	
	\noindent\textbf{Bound} \bm{$y_3$}.
  Recall $y_{3}:= 2|YYE|+|EYU|$.
	We show \[y_{3}\le 60\binom{m_1}{2}m_2+6m_1m_2|U|+O(n^2).\]
	
	\begin{proof}
		Without loss of generality, consider $E_1,Y_2,Y_3\in \cT$.
		Let $ V_{1}=V(E_1)=\{a_{1},a_{2},a_{3}\}$, 
		$V_{2}=V(Y_2)=\{h_{1},h_{2},h_{3},h_4\}$ and $V_{3}=V(Y_3)=\{g_{1},g_{2},g_{3},g_4\}$. 
		Let $G=G_T$ and $Q=Q_T$ as defined above. 
		By Fact~\ref{f4}, we have $|E_{\EY}(G)|\le8$. 
		In this case we use the fact that there do not exist $i$ edges of $G$, and $3-i$ copies of $Y$ in $Q$, all of which are pairwise disjoint -- since each edge of $G$ can be extended to a copy of $Y$ by distinct vertices in $U$, we may replace $E_1,Y_2, Y_3$ by these three copies of $Y$, contradicting the maximality of $\cT$.  
		
		Consider the following three cases here.
		
		\textbf{Case 1.} $ e(G)=0 $.  The trivial bound is $ e(Q)\le 48 $.
		
		\textbf{Case 2.} $ e(G)\ge1$ and $|E_{\EY}(G)|\le6$. 
		Note that the bound below $e(Q)\le 39$ does not use the assumption $|E_{\EY}(G)|\le6$.
		
		First assume $ a_1h_1 \in E(G)$ and $Q'=Q-\{a_{1},h_{1}\}$. By Fact~\ref{f5}, we have $e(Q')\le15$.
		Let $ E_{a_{1}, h_1} :=\{xyz\in E(Q):\{x,y,z\}\cap\{a_{1}, h_1\}\not= \emptyset\}$.
		Note that $ |E_{a_{1},h_{1}}|\le24. $ Thus we get
		\[ e(Q)= e(Q')+|E_{h_{1},h_{1}}|\le 15+24=39.\]
		
		Next we assume $ h_4g_4 \in E(G)$. Since $Q-\{h_{4}, g_4\}$ has no matching of size three, by Fact~\ref{f2}, we have $e(Q-\{h_{4}, g_4\})\le 18$. The number of edges containing $ h_4$ or $g_4 $ is at most 21. Then
		\[e(Q)\le 18+21=39.\]
		
		\textbf{Case 3.} $ e(G)\ge1 , 7\le |E_{\EY}(G)|\le8$. 
		By Fact~\ref{f4}, each of $ E(G[V_{1},V_{2}])$ and $E(G[V_{1},V_{3}])$ is a star.
		Depending on the location of the centers and without loss of generality, we may separate the following cases and show that $e(Q)\le 36$ in each of them.
		
		\medskip
		\noindent\emph{Case 3.1.}
		Suppose that $ E(G[V_{1},V_{2}])=\{a_ph_1,a_ph_2,a_ph_3,a_ph_4\}$ and $E(G[V_{1},V_{3}])\supseteq\{a_qg_1,a_qg_2,a_qg_3\}$. 
		
		If $p\not=q$, then $ Q-\{a_p,a_q\} $ has no copy of $Y$ -- otherwise we obtain two edges of $G$ and a copy of $Y$ in $Q$ such that they are pairwise disjoint, a contradiction. This implies that $ Q-\{a_p,a_q\} $ has at most 4 edges.
		Note that the number of edges containing $ a_p$ or $a_q$ is at most 32. Thus $e(Q)\le 32+4=36.$
		
		If $p=q$, then let $ Q''= Q-\{a_p\} $. We claim that $e(Q'')\le20$. Suppose  instead, by Fact~\ref{f5}, there  exist  $Y',Y''$ that are two disjoint copies of $Y$ in $ Q'' $. 
		Then as $\deg_G(a_p)\ge 7$ there exists $x \notin V(Y')\cup V(Y'')$ such that $a_px\in E(G)$ is disjoint with $Y',Y''$, a contradiction.
		Since the number of edges containing $ a_p$ is at most 16, we have $e(Q)\le 20+16=36.$
		
		\medskip
		\noindent\emph{Case 3.2.}
		Suppose $ E(G[V_{1},V_{2}])\supseteq \{a_1h_1,a_1h_2,a_1h_3\}$ and $E(G[V_{1},V_{3}])=\{g_1a_1,g_1a_2,g_1a_3\}$. 
		Let $ Q''':=Q-\{a_1,g_1\} $. Then we have $\deg_{Q'''}(a_i)\le 3$, $i=2,3$ -- otherwise $Q'''$ contains a copy of $Y$, say, containing $a_2$ but not $a_3$, and at most two vertices in $V_2$.
		We can take $g_1a_3$ and $a_1h_i$ for some $h_i$ disjoint with this copy of $Y$, which is a contradiction.
		Thus $e(Q''')\le 6$. 
		Since the number of edges containing $ a_1$ or $g_1$ is at most 24. Thus $e(Q)\le 24+6=30.$
		
		
		\medskip	
		Suppose the number of triples e.g. $E_1, Y_2, Y_3$ in $\cT$ in three cases above are $x_1$, $x_2$ and $x_3$ respectively, then $x_1+x_2+x_3=m_2\binom{m_1}{2}$. 	Note that by counting edges via triples e.g. $E_1,Y_2, Y_3$, all $EYU$ edges reflected by $G$ are counted exactly $m_1-1$ times. Putting everything together we get 
		
		\[\begin{split}
			y_{3}&\le 	2 (48x_1+39x_2+36x_3)+\frac{1}{m_1-1}(6x_2+8x_3)|U|+O(n^2) \\
			&\le 60\binom{m_1}{2}m_2 + (36x_1+18x_2+12x_3) + 6m_1m_2|U| - (12x_1+6x_2+4x_3)\frac{|U|}{m_1}+O(n^2),
		\end{split}\]
		where $ O(n^2) $ is the number of the remaining edges in  $ EYU $ not reflected in $G$.
		We obtain the desired bound as $|U|/m_1\ge (3n/7)/(n/7)=3$ and thus $(36x_1+18x_2+12x_3) - (12x_1+6x_2+4x_3)\frac{|U|}{m_1}\le 0$.
	\end{proof}

	\noindent\textbf{Bound} \bm{$y_4$}.
	Recall $y_{4}:= 2|YYY|+2|YYU|$. 
	We show \[y_{4}\le  74\binom{m_1}{3}+14\binom{m_1}{2}|U|+O(n^2).\]
	
	\begin{proof}
		Without loss of generality, consider $Y_1,Y_2,Y_3\in \cT$.	 Let $ V_{1}=V(Y_1)=\{k_{1},k_{2},k_{3},k_4\}$, $V_{2}=V(Y_2)=\{h_{1},h_{2},h_{3},h_4\}$ and $V_{3}=V(Y_3)=\{g_{1},g_{2},g_{3},g_4\}$. Let $G=G_T$ and $Q=Q_T$ as defined before. 
		
		First we give the following claims on the structures of $G$ and $Q$.
		\begin{claim} \label{c1}
			The number of edges in $G$ is at most $21$. When $e(G)=21$, there exist $i_1,i_2,i_3\in [4]$ such that $k_{i_{1}},h_{i_{2}},g_{i_{3}}$  cover all edges of $G$ with $\deg_{G}(v)=7$ for $v\in \{k_{i_{1}},h_{i_{2}},g_{i_{3}}\}$, see Figure~\ref{figure1}.
		\end{claim}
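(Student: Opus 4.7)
The plan is to first leverage the maximality of $\cT$ to pin down the matching number of $G$, then combine that with the bipartite-piece bound from Fact~\ref{f4} to cap $e(G)$ at $21$, and finally to read off the extremal structure from the tightness of the argument.

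\emph{Step 1 (matching number from maximality).} I would first show $\nu(G)\le 3$. Suppose for contradiction that $\{e_1,\ldots,e_4\}$ are four pairwise disjoint edges of $G$. By the definition of $G$, each $e_i$ has at least eight witnesses in $U$, i.e.\ vertices $u\in U$ with $e_i\cup\{u\}\in E(H)$. Since $|U|$ is large, one can greedily pick eight pairwise distinct witnesses $u_i,u'_i\in U$ ($i\in[4]$) so that $e_i\cup\{u_i\}$ and $e_i\cup\{u'_i\}$ are both edges of $H$, producing four pairwise disjoint copies of $Y$ on $V_1\cup V_2\cup V_3\cup U$ that cover $16$ vertices. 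Replacing $Y_1,Y_2,Y_3$ (which cover only $12$ vertices) by these four copies gives a $\{Y,E\}$-tiling strictly larger than $\cT$, contradicting maximality.

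\emph{Step 2 (the bound $e(G)\le 21$).} Writing $e_{ij}:=e(G[V_i,V_j])$, Fact~\ref{f4} gives $e_{ij}\le 8$. Assume $e(G)\ge 22$; I will exhibit a matching of size $4$ in $G$ and contradict Step~1. The key observation is that if $e_{ij}=8$, then by K\"onig's theorem the vertex cover of $G[V_i,V_j]$ must consist of two vertices on a single side, which are then joined to every vertex of the other side, furnishing two vertex-disjoint edges in $G[V_i,V_j]$ on any prescribed pair of opposite vertices. WLOG $e_{12}=8$ with cover $\{k_1,k_2\}\subseteq V_1$, and the case split is: \textbf{(A)} a second piece has $8$ edges, say $e_{13}=8$ with one-sided cover $C_{13}$; depending on whether $C_{13}\subseteq V_1$ or $V_3$, and whether it overlaps $\{k_1,k_2\}$, one produces four disjoint edges directly from $G[V_1,V_2]\cup G[V_1,V_3]$, or (when $C_{13}=\{k_1,k_2\}$) combines two edges from $G[V_1,V_2]$ with a matching of size $2$ in $G[V_2,V_3]$ (which exists since $e_{23}\ge 22-16=6>4$ rules out every edge sharing a vertex); \textbf{(B)} only $e_{12}=8$, forcing $e_{13}=e_{23}=7$ and $e(G)=22$; since $e_{23}=7>4$, pick two disjoint edges $h_ag_c,h_bg_d$ in $G[V_2,V_3]$ and complete with $k_1h_p,k_2h_q$ where $\{h_p,h_q\}=V_2\setminus\{h_a,h_b\}$. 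Either way $\nu(G)\ge 4$, contradicting Step~1.

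\emph{Step 3 (extremal structure when $e(G)=21$).} Tracing the above arguments under equality, all three pieces must satisfy $e_{ij}=7$; moreover, none of the pieces can admit a same-side cover (else a variant of the matching-of-four argument in Step~2 succeeds), so each cover is of opposite-side type $\{k_{i_1},h_{i_2}\}$, $\{k_{i_1},g_{i_3}\}$, $\{h_{i_2},g_{i_3}\}$ with the cover edge itself present. The requirement $\nu(G)\le 3$ forces the three covers to \emph{share} the same three hubs $k_{i_1}\in V_1$, $h_{i_2}\in V_2$, $g_{i_3}\in V_3$ (any ``disjoint-hub'' configuration produces a matching of size~$4$ analogous to Step~2(B)). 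Consequently $\{k_{i_1},h_{i_2},g_{i_3}\}$ is a vertex cover of $G$ covering all $21$ edges, and the edges of $G$ are precisely those incident to this triple, yielding the structure of Figure~\ref{figure1} and the claimed degree profile.

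\emph{Main obstacle.} The hardest part is the case analysis in Step~2(A), where several subcases distinguished by how the vertex covers of the two $8$-edge pieces sit in the tripartition must each be dispatched by an explicit matching; the saving grace is that $e_{ij}=8$ pins down the cover structure precisely, giving a convenient pair of complete neighborhoods to work with. Once Steps~1 and~2 are in hand, the extremal characterization in Step~3 is essentially bookkeeping: one only needs to record which inequalities in the case analysis were used tightly.
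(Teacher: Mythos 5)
Your overall strategy—leveraging the maximality of $\cT$ to cap $\nu(G)$ at $3$, combining this with the piecewise bound from Fact~\ref{f4}, and then reading off the extremal structure—is essentially the same as the paper's, but your Step~2(A) contains a gap. You claim that when $e_{12}=e_{13}=8$ and the one-sided cover $C_{13}$ of $G[V_1,V_3]$ is not $\{k_1,k_2\}$, four disjoint edges can be found \emph{directly} in $G[V_1,V_2]\cup G[V_1,V_3]$. This fails when $C_{13}$ shares exactly one vertex with $\{k_1,k_2\}$, say $C_{13}=\{k_1,k_3\}\subseteq V_1$: then $k_4$ is isolated in $G[V_1,V_2]\cup G[V_1,V_3]$, and since that graph is bipartite with one colour class $V_1$, it has matching number at most $3$. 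Your dichotomy (``$C_{13}=\{k_1,k_2\}$'' versus ``everything else'') silently omits this partial-overlap case, so the contradiction is not obtained.

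The paper sidesteps this with a uniform argument that also makes your (A)/(B) case split unnecessary: once $e_{12}=8$ forces a one-sided cover $\{k_1,k_2\}\subseteq V_1$ with both $k_1,k_2$ joined to all of $V_2$, one always takes two disjoint edges $e_1,e_2\in E(G[V_2,V_3])$ (available since $e_{23}\ge 21-8-8=5>4$, so $G[V_2,V_3]$ is not a star) and then two disjoint edges $k_1h,k_2h'$ with $h,h'\in V_2\setminus(e_1\cup e_2)$—a matching of size four, regardless of what $G[V_1,V_3]$ looks like. Running this already when $e(G)\ge 21$ (rather than $\ge 22$) immediately forces $e_{12}=e_{13}=e_{23}=7$, which also cleans up your Step~3, where you currently re-derive the ``no same-side cover'' fact separately. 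Your Step~3 hub-sharing claim is the right idea—it is the paper's ``at most three center vertices'' argument—but as written it is only a sketch; the paper supplies the missing matching constructions for the ``$p\neq q$'' and ``$p=q$'' subcases.
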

		
		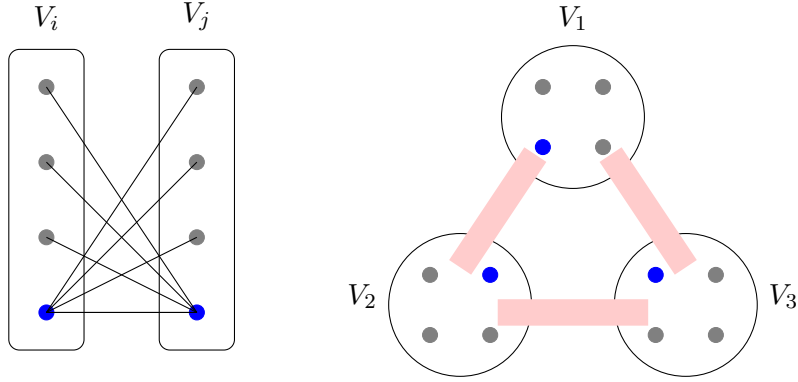
\begin{figure}[h]
			\begin{center}
				\begin{tikzpicture}
					[inner sep=2pt,
					vertex/.style={circle, draw=black!50, fill=black!50},
					rect/.style={rectangle, inner sep=7,minimum size=0.8},
					]

					\draw[rounded corners] (-1.5,-0.5) rectangle (-0.5,3.5);
					\node at (-1,3.6) [label=above:$V_i$] {};
					\node at (-1,3) [vertex] {};
					\node at (-1,2) [vertex] {};
					\node at (-1,1) [vertex] {};
					\node at (-1,0) [vertex,color=blue] {};
					\draw[rounded corners] (0.5,-0.5) rectangle (1.5,3.5);
					\node at (1,3.6) [label=above:$V_j$] {};
					
					\node at (1,3) [vertex] {};
					\node at (1,2) [vertex] {};
					\node at (1,1) [vertex] {};
					\node at (1,0) [vertex,color=blue] {};
					
					\draw (-1,0) -- (1,0);
					\draw (-1,0) -- (1,1);
					\draw (-1,0) -- (1,2);
					\draw (-1,0) -- (1,3);
					\draw (-1,0) -- (1,0);
					\draw (-1,1) -- (1,0);
					\draw (-1,2) -- (1,0);
					\draw (-1,3) -- (1,0);
					\draw  (6,2.6) circle (0.95);
					\node at (6,3.6) [label=above:$V_1$] {};
					\node at (5.6,2.2) [vertex,color=blue] {};
					\node at (5.6,3.0) [vertex] {};
					\node at (6.4,2.2) [vertex] {};
					\node at (6.4,3) [vertex] {};
					\draw  (4.5,0.1) circle (0.95);
					\node at (3.2,-0.1) [label=above:$V_2$] {};
					\node at (4.1,-0.3) [vertex] {};
					\node at (4.1,0.5) [vertex] {};
					\node at (4.9,-0.3) [vertex] {};
					\node at (4.9,0.5) [vertex,color=blue] {};
					
					\draw  (7.5,0.1) circle (0.95);
					\node at (8.8,-0.1) [label=above:$V_3$] {};
					\node at (7.1,-0.3) [vertex] {};
					\node at (7.1,0.5) [vertex,color=blue] {};
					\node at (7.9,-0.3) [vertex] {};
					\node at (7.9,0.5) [vertex] {};
					\draw[line width=10pt, color=red!20] (5.5,2.1)--(4.5,0.6);
					\draw[line width=10pt, color=red!20] (6.5,2.1)--(7.5,0.6);
					\draw[line width=10pt, color=red!20] (5.0,0)--(7,0);
				\end{tikzpicture}
				
				\caption{The structure of $ G $ when $ e(G)=21 $}
				\label{figure1}	
			\end{center}
		\end{figure}

		\begin{proof}
			Without loss of generality, let $e(G[V_{1},V_{2}])\ge \max\{e(G[V_{2},V_{3}]),e(G[V_{1},V_{3}])\}$.
			By Fact~\ref{f4}, we know that $G[V_{1},V_{2}] $ has no matching of size three and $e(G[V_{1},V_{2}])\le8$. 
			Now suppose $e(G)\ge 21$. 
			First assume $e(G[V_{1},V_{2}]) =  8$, and we also have $e(G[V_{1},V_{3}])\ge 5$ and $e(G[V_{2},V_{3}])\ge 5$.
			Note that $ G[V_{1},V_{2}] $ has two centers, and they must be in the same part (otherwise $e(G[V_{1},V_{2}]) \le 7$).
			Without loss of generality, suppose the two centers are in $ V_1 $. 
			Take disjoint $e_1,e_2\in E(G[V_{2},V_{3}])$ and  $e_3,e_4\in E(G [V_{1},V_{2} \setminus(e_1\cup e_2)])$ that form a matching of size four in $G$, which can be extended to four disjoint copies of $Y$ by adding vertices in $U$, contradicting the maximality of $\cT$.
			This contradiction implies that $e(G[V_{1},V_{2}])= e(G[V_{2},V_{3}]) =e(G[V_{1},V_{3}]) =7$.
			
			
			We next claim that $G$ has at most three center vertices, from which the claim easily follows. 
			Towards a contradiction, assume that $G$ has at least four center vertices, and without loss of generality, we may assume that $k_1,k_2 \in V_1$ are the centers, and $k_1$ is a center of $G[V_{1},V_{p}]$, $k_2$ is a center of $G[V_{1},V_{q}]$, $p,q\in \{2, 3\}$ and it is possible that $p=q$.
			Let $ e_1,e_2\in E(G[V_{2},V_{3}]) $ be two disjoint edges. 
			Since the degree of each center is at least three (as each bipartite graph has seven edges), for $p\neq q$, we can take two disjoint edges one containing $k_1$ and the other containing $k_2$ which together with $e_1, e_2$ form a matching of size four.
			In the case $p=q$, as  $e(G[V_{1},V_{p}])=7$, one of $k_1$ and $k_2$ has degree three and the other has degree four in  $G[V_{1},V_{p}]$ and thus it is also possible to pick two disjoint edges which together with $e_1, e_2$ form a matching of size four. 
			Clearly the existence of the matching of size four in $G$ contradicts the maximality of $\cT$ and we are done.
		\end{proof}	
		
		\begin{claim} \label{c2}
			If  $e(G)\ge18$ and $\{ v_{1},v_{2},v_{3}\}$ is a vertex  cover  of $G$, 
			then $\{ v_{1},v_{2},v_{3}\}$ is also a vertex  cover  of $Q$, therefore, $e(Q)=|E_{v_{1},v_{2},v_{3}}|$, where $E_{v_{1},v_{2},v_{3}} :=\{xyz\in E(Q):\{x,y,z\}\cap\{v_{1},v_{2},v_{3}\}\not= \emptyset\}.$
		\end{claim}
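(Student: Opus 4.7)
The plan is to argue by contradiction. Suppose there exists an edge $e=\{x,y,z\}\in E(Q)$ with $\{x,y,z\}\cap\{v_1,v_2,v_3\}=\emptyset$, where (after relabeling) $x\in V_1$, $y\in V_2$, $z\in V_3$. The strategy is to produce a strictly larger $\{Y,E\}$-tiling by replacing $\{Y_1,Y_2,Y_3\}$ in $\cT$ with $e$ (kept as an edge) together with three pairwise disjoint copies $Y_1',Y_2',Y_3'$ of $Y$, each obtained by extending a $G$-edge $v_iu_i$ by two fresh vertices of $U$. Since the old triple covers the $12$ vertices of $V_1\cup V_2\cup V_3$ while the new collection covers $3+4\cdot 3=15$ vertices of $V(Q)\cup U$, this would contradict the maximality of $\cT$.

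The technical core is therefore to find a matching $\{v_1u_1,v_2u_2,v_3u_3\}$ in $G$ with the $u_i$'s distinct and disjoint from $\{x,y,z\}$. Setting $W:=V(Q)\setminus(\{x,y,z\}\cup\{v_1,v_2,v_3\})$ (a set of size $6$) and letting $B$ be the bipartite graph between $\{v_1,v_2,v_3\}$ and $W$ with $v_iw\in E(B)$ iff $v_iw\in E(G)$, this amounts to a perfect matching saturating $\{v_1,v_2,v_3\}$. I would verify Hall's condition via a short case analysis on how the cover vertices distribute among $V_1,V_2,V_3$.

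If all three cover vertices lie in a single part $V_i$, then Fact~\ref{f4} gives $e(G)\le 8+8=16<18$, immediately a contradiction. Otherwise, letting $a\le 3$ denote the number of $G$-edges within $\{v_1,v_2,v_3\}$ and $b_2\le 6$ denote the number of $G$-edges between $\{v_1,v_2,v_3\}$ and $\{x,y,z\}$, one gets $e(B)=e(G)-a-b_2\ge 9$ and $\deg_B(v_i)\le 5$ for each $i$. Hall's condition then follows: a triple violation forces $e(B)\le 2\cdot 3=6$; a pair violation forces some $\deg_B(v_k)\ge e(B)-2\ge 7>5$; and a singleton violation $\deg_B(v_i)=0$ forces $N_G(v_i)\subseteq\{x,y,z,v_1,v_2,v_3\}\setminus\{v_i\}$, so $\deg_G(v_i)\le 4$, after which a bookkeeping using $\deg_G(v_j)\le 8$ and the forced cross-edges (for instance, $\deg_G(v_j)=8$ yields $v_jv_k\in E(G)$ for every $v_k$ in an opposite part) yields $e(G)\le 17$, again contradicting the hypothesis.

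Once the matching is secured, each $v_iu_i\in E(G)$ admits at least $8$ single-vertex extensions in $U$ by the definition of $G$, so a greedy choice produces three pairwise disjoint $Y$-copies $Y_i'$ using $6$ distinct $U$-vertices (with at least $8,6,4$ valid choices at the successive steps). Replacing $\{Y_1,Y_2,Y_3\}$ in $\cT$ by $\{e,Y_1',Y_2',Y_3'\}$ yields the promised strictly larger $\{Y,E\}$-tiling, contradicting the maximality of $\cT$ and establishing the claim. I expect the main obstacle to be the singleton case of Hall's condition, whose verification relies on the fine interplay between the cover vertices, the vertices of $\{x,y,z\}$, and the parts they inhabit.
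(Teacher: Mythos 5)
Your high-level strategy is the same as the paper's: assume some edge $\{x,y,z\}\in E(Q)$ avoids the cover, find a matching $\{v_1u_1,v_2u_2,v_3u_3\}$ in $G$ with the $u_i$ distinct and disjoint from $\{x,y,z\}$, lift each $v_iu_i$ to a copy of $Y$ using two fresh $U$-vertices, and swap $\{Y_1,Y_2,Y_3\}$ for $\{\{x,y,z\},Y_1',Y_2',Y_3'\}$, a net gain of three covered vertices. Both proofs reduce the claim to finding that matching; the paper distinguishes whether the cover occupies one, two, or three parts and greedily extracts disjoint edges under a WLOG degree ordering, whereas you package the same content as a Hall's condition check, which is a cleaner way to organize the argument.

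The place where your sketch falls short is, as you anticipated, the singleton case of Hall's condition. Take $\deg_B(v_1)=0$ with $v_1\in V_1$ while $v_2,v_3\in V_2$. Then $N_G(v_1)\subseteq\{y,z,v_2,v_3\}$ and $v_2v_3$ cannot be a $G$-edge, so writing $a=|\{v_1v_2,v_1v_3\}\cap E(G)|$, your stated toolkit gives only $e(G)\le\deg_G(v_1)+8+8-a$. When $\deg_G(v_1)=4$ (hence $a=2$) this evaluates to $18$, and the ``forced cross-edge'' observation adds nothing because both of $v_1v_2,v_1v_3$ are already subtracted and $v_2v_3$ is excluded by the partition. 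To close this subcase you need an extra ingredient beyond degree bounds: either invoke Fact~\ref{f4} directly to note that $\deg_G(v_2)=\deg_G(v_3)=8$ together with $v_1$'s edges into $V_2$ would force $e(G[V_1,V_2])\ge 9$, violating the biclique bound; or observe that with $v_2,v_3\in V_2$, every edge of $G[V_1,V_3]$ must be covered by $v_1$ alone, and $\deg_B(v_1)=0$ then forces $e(G[V_1,V_3])\le 1$, giving $e(G)\le e(G[V_1,V_2])+e(G[V_1,V_3])+e(G[V_2,V_3])\le 8+1+8=17$. With that patch (and the matching analysis for the symmetric case where the singleton is the unique cover vertex in its part, which does go through via forced cross-edges), the argument is complete.
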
	
		\begin{proof}
			Fix any $k\in V_1$, $h\in V_2$, $g\in V_3$, $k,h,g\notin \{ v_{1},v_{2},v_{3}\}$.  We will show $ khg\notin E(Q) $. 
			Without loss of generality, let $ \deg_{G}(v_{1})\ge \deg_{G}(v_{2}) \ge \deg_{G}(v_{3}) $. We separate three cases below depending on the location of $v_1$, $v_2$ and $v_3$.
			
				Suppose $ v_{1},v_{2},v_3\in V_1$, then $ e(G)\le16 $, which is a contradiction.
				
					Suppose $ v_{1},v_{2}\in V_1,v_{3}\in V_3$. Note that $ e(G[V_1,V_2]\cup G[V_1,V_3])\le 16$. Then $\deg_{G[V_2,V_3]}(v_{3})\ge2>1$,  there exists $v_{3}w'_{3}\in E(G)$ such that $w'_{3}\in V_{2}$, $w'_{3}\not=h$.  Since $\deg_{G}(v_{2})\ge5>4$, $\deg_{G}(v_{1})\ge6>5$. Similar as the previous paragraph, we conclude that $ khg\notin E(Q) $. 
				
			Suppose $ v_{i}\in V_i$ for $i\in [3]$.  Let $G'=G-\{v_{1}v_{2},v_{2}v_{3},v_{1}v_{3}\}$. Note that $ e(G')\ge 18-3=15$. Then $\deg_{G'}(v_{3})\ge3>2$,  there exists $v_{3}w'_{3}\in E(G')$, where $w'_{3}\notin \{ k,h\}$. Similarly,  $\deg_{G'}(v_{2})\ge5>3$, $\deg_{G'}(v_{1})\ge6>5$, there exist $ w'_{1},w'_{2} \notin \{ k,h,g,w'_{3}\}$, such that  $v_{2}w'_{2}\in E(G')$, $v_{1}w'_{1}\in E(G')$. Thus $G$ has three pairwise disjoint edges which are also disjoint with $\{k,h,g\}$. By the maximality of $\cT$, $ khg\notin E(Q) $.

			Therefore, we conclude that $e(Q)= |E_{v_{1},v_{2},v_{3}}|$.
		\end{proof}
		For convenience, by a cross matching, we mean a matching $e_1,e_2,e_3$ in $G_T$ such that  $e_1\subseteq V(Y_1) \times V(Y_2)$, $e_2\subseteq V(Y_2)\times V(Y_3)$, $e_3\subseteq V(Y_3)\times V(Y_1)$. 
		Next we show the existence of a cross matching.
		\begin{claim} \label{c3}
			If $e(G)\ge17,$ then there is a cross matching in $G$.
		\end{claim}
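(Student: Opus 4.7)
I prove the contrapositive: assuming $G_T$ has no cross matching, I show $e(G_T) \le 16$.

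Paragraph 1: Setup. By Fact~\ref{f4}, for each pair $\{i,j\}$ the bipartite graph $G_T[V_i, V_j]$ has no matching of size three and hence, by K\"onig's theorem, admits a vertex cover of size at most two; in particular $e(G_T[V_i, V_j]) \le 8$. Write $M_{ij} := e(G_T[V_i, V_j])$. If some $M_{ij} = 0$ then $e(G_T) \le 8 + 8 = 16$ and we are done, so henceforth all three $M_{ij}$ are positive.

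Paragraph 2: Structural reduction. Fix any $e_{23} = b_0 c_0 \in G_T[V_2, V_3]$. Let $H_{12}$ (resp.\ $H_{13}$) be the subgraph of $G_T[V_1, V_2]$ (resp.\ $G_T[V_1, V_3]$) obtained by deleting all edges through $b_0$ (resp.\ $c_0$). A cross matching extending $e_{23}$ exists precisely when one can pick $e_{12} \in H_{12}$ and $e_{13} \in H_{13}$ meeting $V_1$ in distinct vertices. The absence of such a cross matching forces one of three scenarios: (A) $H_{12} = \emptyset$, so every edge of $G_T[V_1, V_2]$ contains $b_0$ and hence $M_{12} \le 4$; (B) $H_{13} = \emptyset$, symmetrically giving $M_{13} \le 4$; or (C) there is a common $a^* \in V_1$ lying in every edge of $H_{12} \cup H_{13}$, yielding vertex covers $\{a^*, b_0\}$ of $G_T[V_1, V_2]$ and $\{a^*, c_0\}$ of $G_T[V_1, V_3]$, both of the ``one endpoint in each part'' type, hence $M_{12} \le 7$ and $M_{13} \le 7$.

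Paragraph 3: Concluding via iteration. In cases (A) and (B), I rerun the structural reduction starting from an edge of $G_T[V_1, V_3]$ or $G_T[V_1, V_2]$ to bound a second $M_{ij}$, and then combine with the trivial bound on the third to obtain $M_{12} + M_{13} + M_{23} \le 16$. In case (C), $M_{12} + M_{13} \le 14$, so it suffices to show $M_{23} \le 2$. Assuming $M_{23} \ge 3$ toward contradiction, pick a second edge $e'_{23} = b'_0 c'_0 \in G_T[V_2, V_3]$ and rerun the reduction to get another common vertex $a^{**} \in V_1$ and analogous covers $\{a^{**}, b'_0\}$, $\{a^{**}, c'_0\}$. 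A short case analysis on the matching structure of $G_T[V_2, V_3]$ and the relative positions of $a^*, a^{**}, b_0, b'_0, c_0, c'_0$ (in particular whether $a^* = a^{**}$ and how $\{b_0,c_0\}$ intersects $\{b'_0,c'_0\}$) then forces a further reduction in $M_{12}$ or $M_{13}$, contradicting $M_{12} + M_{13} + M_{23} \ge 17$.

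Paragraph 4: The main obstacle is the bookkeeping in case (C) across multiple choices of the initial edge $e_{23}$, namely distinguishing $a^* = a^{**}$ from $a^* \ne a^{**}$ and handling the various intersection patterns of $\{b_0, c_0\}$ with $\{b'_0, c'_0\}$. In each subcase a short counting argument, leveraging the fact that in each bipartite subgraph the vertex cover has exactly one vertex in each side and hence at most $1 + 3 + 3 = 7$ edges, suffices to bound $M_{12} + M_{13} + M_{23}$ by $16$.
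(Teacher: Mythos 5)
Your overall strategy — work in the contrapositive, use Fact~\ref{f4} to bound each pair, and then iterate a ``fix an edge $e_{23}$ and see where the covers of $G[V_1,V_2]$ and $G[V_1,V_3]$ must be'' reduction — is different from the paper's and is plausible, but as written it has a genuine gap: the iterated case analysis in Paragraphs~3 and~4 is not actually carried out, and several of its subcases do \emph{not} close with the one-step bound you invoke. Concretely, in your case~(A) you get $M_{12}\le 4$, but rerunning from an edge of $G[V_1,V_3]$ can land in your case~(C) again, giving only $M_{12}\le 4$, $M_{13}\le 8$, $M_{23}\le 7$, i.e.\ a bound of~$19$. Similarly, inside case~(C) with $M_{23}\ge 3$, rerunning from a second edge $e'_{23}$ need not produce ``another common vertex $a^{**}$'' (it may give case~(A) or~(B)), and even when it does, the subcase $a^*=a^{**}$, $b'_0=b_0$, $c'_0\neq c_0$ only improves the $M_{13}$ bound to~$4$, leaving $7+4+8=19$ and forcing yet another round. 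Sorting these residual cases into a correct finite case tree is precisely the work the claim requires, and Paragraph~4's appeal to ``a short counting argument'' asserts rather than supplies it.

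For comparison, the paper's proof is direct and much shorter: it orders the pairs so that $G[V_1,V_2]$ is densest (hence $\ge 6$ edges and two centers $u,v$ each of degree at least two), observes that it suffices to find disjoint $e_1\in G[V_2,V_3]$, $e_2\in G[V_1,V_3]$ with $|(e_1\cup e_2)\cap\{u,v\}|\le 1$ (because the uncovered center of $G[V_1,V_2]$ then supplies $e_3$), and splits into two cases depending on whether $u,v$ lie in the same part. That argument avoids the iteration entirely. If you want to rescue your approach, the missing ingredient is an explicit bound, for each configuration of two (or three) reductions with shared elements among $\{a^*,b_0,c_0\}$ and $\{a^{**},b'_0,c'_0\}$, on what the intersecting vertex covers force: for example, two covers $\{a^*,b_0\}$ and $\{a^{**},b_1\}$ of the same $G[V_1,V_2]$ with $a^*\neq a^{**}$ and $b_0\neq b_1$ force $M_{12}\le 2$. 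Working this out systematically is doable but substantially longer than the paper's two-case argument, and until it is done the proof is incomplete.
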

		\begin{proof}
			We may assume $e(G[V_{1},V_{2}])\ge e(G[V_{1},V_{3}])\ge e(G[V_{2},V_{3}])$, then $e(G[V_{1},V_{2}])\ge6.$  Suppose the two centers of $G[V_{1},V_{2}]$ are $u$ and $v$, and thus they have degree at least two in $G[V_{1},V_{2}]$. 
			It suffices to find a matching $\{e_1, e_2\}$ of size two such that $ e_1\in E(G[V_{2},V_{3}])$, $e_2\in E(G[V_{1},V_{3}])$ and $|( e_1\cup e_2)\cap\{u,v\}|\le1$.
			Indeed, as $e_1\cup e_2$ contains one vertex in $V_1$ and in $V_2$, respectively, for $u$ or $v$ which is not contained in $e_1\cup e_2$, we can pick an edge $e_3$ in $G[V_{1},V_{2}]$ disjoint from $e_1\cup e_2$.
		Hence we obtain a cross matching.

			We first assume that $u$ and $v$ are in the same part. Note that $e(G[V_{1},V_{3}])\ge5$, $e(G[V_{2},V_{3}])\ge1$. Then we take disjoint $ e_1\in E(G[V_{2},V_{3}])$, $e_2\in E(G[V_{1},V_{3}])$ and clearly $|\{u,v\}\cap( e_1\cup e_2)|\le1$. 
			
			Now suppose $v\in V_1$ and $u\in V_2$, then $e(G[V_{1},V_{2}])\le7,$ $e(G[V_{1},V_{3}])\ge5$ and $e(G[V_{2},V_{3}])\ge3$.  
			As $e(G[V_{1},V_{3}])\ge5$, there exists $wv'\in  E(G)$ such that $w\in V_3$, $v'\in V_1$ and $v'\neq v$.
			Then we are done unless all edges in $ E(G[V_{2},V_{3}]) $ contain $w$. 
			But this implies that there exists an edge $e_1=wu'\in  E(G[V_{2},V_{3}])$ with $u'\in V_2\setminus \{u\}$, namely, $u\notin e_1$.
			As $e(G[V_{1},V_{3}])\ge5$, we can pick an edge $e_2$ in $G[V_{1},V_{3}]$ not containing $w$ and we are done.
		\end{proof}	
		
		We consider five cases here.
		
		\textbf{Case 1.} $ e(G)=0 $. The trivial bound is $ e(Q)\le 64 $.
		
		\textbf{Case 2.} $ 1\le e(G)\le16$. 
		
		Suppose $ k_3h_1\in E(G) $, then $Q'=Q-\{k_3,h_1\}$ has no matching of size  three. By Fact~\ref{f3},  $e(Q')\le24.$ Let $ E_{k_3,h_1} =\{xyz\in E(Q):\{x,y,z\}\cap\{k_3,h_1\}\not= \emptyset\}$.
		Note that $ |E_{k_3,h_1}|\le28. $ Hence
		\[
		e(Q)= e(Q')+|E_{k_3,h_1}|\le 24+28=52.
		\]

		\textbf{Case 3.} $  e(G)=17$ .
		
		By Claim~\ref{c3}, let $h_1k_1,k_2g_2,h_3g_3\in E(G)$ and $ V'=\{h_1,k_1,k_2,g_2,h_3,g_3\} $. 
		By the definition of $G$, $ V' $ together with six vertices of $U$ forms three copies of $Y$, so we have $ e(Q-V')=0 $. 
		Let $Q_1=Q-\{k_2,g_2,h_3,g_3\}$.
		We may assume that $Q_1$ has no matching of size two -- otherwise two disjoint edges together with $k_2g_2$ and $h_3g_3$ would contradict the maximality of $\cT$.
		Now we claim that $e(Q_1)=\deg_{Q_1-k_1}(h_1)+\deg_{Q_1-h_1}(k_1) + \deg_{Q_1}(h_1k_1)\le6$. 
		First, clearly $\deg_{Q_1}(h_1k_1)\le2$.
		Note that if one of $\deg_{Q_1-k_1}(h_1)$ and $\deg_{Q_1-h_1}(k_1)$ is nonzero, then the other is at most two.
		Indeed, if $\deg_{Q_1-k_1}(h_1)$ is nonzero, say, $h_1g_3k_3\in E(Q_1)$, then $\deg_{Q_1-h_1}(k_1)\le 2$ as the only possible edges are $k_1g_3h_3$ and $k_1g_3h_4$.
		This implies that $\deg_{Q_1-k_1}(h_1)+\deg_{Q_1-h_1}(k_1)\le 4$ and our claim is proved.
		
		Since $Q_{2}:=Q-\{h_1,k_1\}$ has no matching of size  three, using Fact~\ref{f3}, we get $e(Q_2)\le24.$ Let $ E_{k_1, h_1} =\{xyz\in E(Q):\{x,y,z\}\cap\{k_1,h_1\}\not= \emptyset\}$. 
		First, $|E_{k_1, h_1}\setminus E(Q_1)|\le 4^3-3\times3\times4-10=18$, where we subtract from all possible edges ($4^3$) the number of edges not containing $k_1$ or $h_1$ ($3\times3\times4$ of them) and the number of edges containing $ h_1$ or $k_1 $ on $ V(Q_1)  $ ($10$ of them). 
		Thus, by $e(Q_1)\le6$ we get $ |E_{k_1,h_1}|= |E_{k_1, h_1}\setminus E(Q_1)| + e(Q_1)\le 24$. Hence
		\[
		e(Q)= e(Q_2)+|E_{k_1,h_1}|\le 24+24=48.
		\]

		\textbf{Case 4.} $ 18\le e(G)\le20$ . 
		
		The following claim characterizes the structure of $G$. Intuitively, if $G$ has no vertex cover of size three, then it is likely to contain a matching of size four, which can be extended to four disjoint copies of $Y$, contradicting the maximality of $\cT$.
		
		\begin{claim} \label{c4}
			$G$ has a vertex cover of size three.
		\end{claim}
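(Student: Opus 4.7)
The plan is to argue by contradiction: assuming $G$ has no vertex cover of size three, we exhibit a matching of size four in $G$, which contradicts the maximality of $\cT$. Indeed, by the definition of $G$, each edge of $G$ extends to a copy of $Y$ by adding two vertices (chosen from the at least eight available in $U$); hence four pairwise disjoint edges of $G$ yield four vertex-disjoint copies of $Y$ that together span $V(Y_1) \cup V(Y_2) \cup V(Y_3)$ plus eight vertices of $U$. Replacing $Y_1, Y_2, Y_3$ in $\cT$ by these four copies strictly enlarges $\cT$, a contradiction.

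Let $\tau$ denote the minimum vertex cover number. Without loss of generality, assume $e(G[V_1,V_2]) \ge e(G[V_1,V_3]) \ge e(G[V_2,V_3])$, so $e(G[V_1,V_2]) \ge 6$. By Fact~\ref{f4}, $G[V_1,V_2]$ has no matching of size three, and so K\"onig's theorem produces two vertices $u, v$ covering every edge of $G[V_1,V_2]$. Let $G' := G - \{u,v\}$; every edge of $G'$ lies in $G[V_1,V_3] \cup G[V_2,V_3]$, so $G'$ is bipartite with $V_3$ on one side and $(V_1 \cup V_2) \setminus \{u,v\}$ on the other. If $\tau(G') \le 1$, then $\{u,v\}$ together with a vertex cover of $G'$ would form a three-vertex cover of $G$, contradicting our assumption; hence $\tau(G') \ge 2$, and K\"onig's theorem again yields a matching $\{e_1, e_2\}$ in $G'$.

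It now suffices to extend $\{e_1, e_2\}$ to a matching of size four by choosing edges $e_3 \ni u$ and $e_4 \ni v$ in $G[V_1,V_2]$, disjoint from each other and from $V(e_1) \cup V(e_2)$. When $u, v$ lie in the same part (say $V_1$), the other endpoints of $e_3, e_4$ must come from $V_2 \setminus V(e_1 \cup e_2)$, a set of size at least two; since $\deg(u) + \deg(v) \ge e(G[V_1,V_2]) \ge 6$ and each degree is at most four, the neighborhoods $N(u), N(v)$ cover all of $V_2$, and one easily selects distinct $x_1 \in N(u)$, $x_2 \in N(v)$, re-choosing $\{e_1, e_2\}$ inside $G'$ if necessary.

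The main obstacle is the mixed case $u \in V_1, v \in V_2$, where the constraints on $e_3 = ux$ and $e_4 = vy$ couple the two parts and a small neighborhood of $v$ in $V_1$ may obstruct extensions of a given matching. Here one argues by a finite subcase analysis on $(\deg(u), \deg(v))$, using $\deg(u) + \deg(v) \ge 6$ to force large neighborhoods and the slack $e(G') \ge e(G) - \deg(u) - \deg(v) \ge 10$ to re-select $\{e_1, e_2\}$. When no such extension exists for any admissible matching, the forced structural rigidity (combined with Fact~\ref{f4} applied to $G[V_1,V_3]$ and $G[V_2,V_3]$, together with the bounds $18 \le e(G) \le 20$) exposes three explicit vertices covering $G$, contradicting the assumption and completing the proof.
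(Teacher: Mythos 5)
Your overall strategy---assume no vertex cover of size three and try to extract four pairwise disjoint edges of $G$ to contradict the maximality of $\cT$---is in the right spirit, but as written it has two genuine gaps and one logical soft spot.

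First, the ``same part'' case is not quite rigorous. From $\deg(u)+\deg(v)\ge 6$ with each degree at most $4$ you cannot conclude $N(u)\cup N(v)=V_2$; for instance $N(u)=N(v)=\{b_1,b_2,b_3\}$ has $\deg(u)+\deg(v)=6$ but leaves $b_4$ uncovered. More importantly, the edges $e_1,e_2$ of $G'$ both meet $V_3$ and may each also use a vertex of $V_2$, so only two vertices of $V_2$ remain available; you need one of them in $N(u)$ and the other in $N(v)$, and you invoke ``re-choosing $\{e_1,e_2\}$'' without showing that a favorable re-choice exists. Nothing in what you wrote rules out that $G'$ has a unique (or very constrained) maximum matching that always blocks the extension.

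Second, and more seriously, the mixed case $u\in V_1$, $v\in V_2$ is not addressed at all: phrases like ``a finite subcase analysis'' and ``the forced structural rigidity\dots exposes three explicit vertices'' describe the shape of an argument rather than give one. This is exactly where the paper's proof does its real work, and it is substantial: the paper first disposes of the situation where some pair $\{v_i,v_j\}$ has $G-\{v_i,v_j\}$ matching-free (star or triangle), then under the complementary hypothesis $(\dagger)$ it splits on $\Delta(G)\ge 6$ versus $\Delta(G)\le 5$, proving auxiliary degree bounds (Fact~\ref{f7}), using Claim~\ref{c3} to obtain a cross matching, and establishing Claim~\ref{c5} about the complement of the cross matching, before concluding. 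That machinery is what handles the awkward configurations you wave at.

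Third, a logical point: you frame the argument as ``no $3$-cover $\Rightarrow$ $4$-matching, contradiction.'' But in a tripartite graph the cover number can exceed the matching number, so it is not a priori true that absence of a $3$-cover yields a $4$-matching. The paper's proof does not rely on that implication in isolation: in several branches it instead derives $e(G)<18$ (or $e(G^{(1)})<10$), using the edge count $18\le e(G)\le 20$ as the contradiction rather than producing a $4$-matching. Your proposal does not use the lower bound $e(G)\ge 18$ anywhere except to guarantee $e(G[V_1,V_2])\ge 6$, and never falls back on an edge-count contradiction, so it would need to separately rule out the possibility $\nu(G)=3<\tau(G)$ under $18\le e(G)\le 20$. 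As it stands the argument is incomplete.
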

		
		\begin{proof}
			First suppose that there exist $  v_i,v_j \in V(G) $ such that $G-\{v_i,v_j\}$ has no matching of size two.
			Then $G-\{v_i,v_j\}$ is a star or a triangle.
			If $G-\{v_i,v_j\}$ is a star, then the claim holds.
			So we assume $G-\{v_i,v_j\}$ is a triangle. 
			Let $ v_i\in V_{i_1},v_j\in V_{i_2} $. 
			Since $G[V_i,V_j]$ has no matching of size three, when $ i_1=i_2 $, letting $ i_1=1 $, then $e(G[V_1,V_i])\le 3+2+1=6$, $i=2,3$, and $e(G[V_2,V_3])=1$ (see Figure~\ref{figure2}). Thus $ e(G)\le 6+6+1<18 $. When $ i_1\neq i_2 $, 
			we have $e(G[V_{i_1},V_{i_2}])\le6$, $e(G[V_{i_1},V_k])\le5$ and $e(G[V_{i_2},V_k])\le5$ for $k\not=i_1,i_2$, so $ e(G)\le 5+5+6<18 $, a contradiction. 
			
		Thus in the rest of the proof we may assume the following.
			
			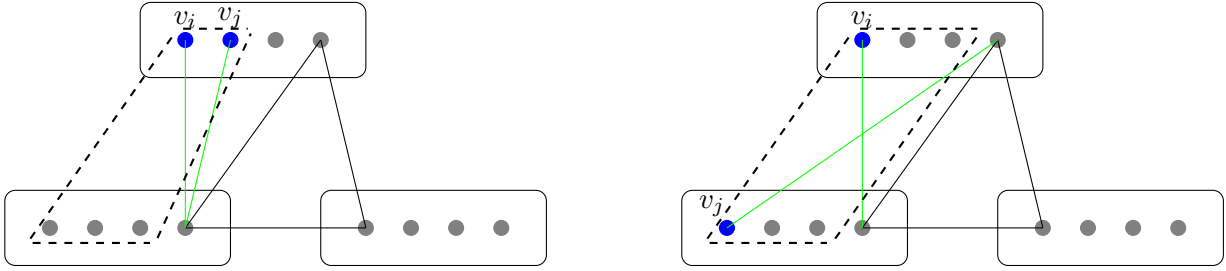
\begin{figure}[h]
				\begin{center}
					\begin{tikzpicture}
						[inner sep=2pt,
						vertex/.style={circle, draw=black!50, fill=black!50},
						rect/.style={rectangle, inner sep=7,minimum size=0.8},
						]

						\draw[rounded corners] (5,5) rectangle (8,4);
						\node at (5.6,4.5) [vertex,color=blue] {};
						\node at (7.4,4.5) [vertex] {};
						\node at (6.8,4.5) [vertex] {};
						\node at (6.2,4.5) [vertex,color=blue] {};
						\node at (6.2,4.5) [label=above:$v_j$] {};	
						\node at (5.6,4.5) [label=above:$v_i$] {};	
						
						\draw[rounded corners] (3.2,2.5) rectangle (6.2,1.5);
						\node at (3.8,2) [vertex] {};
						\node at (4.4,2) [vertex] {};
						\node at (5.0,2) [vertex] {};
						\node at (5.6,2) [vertex] {};
						
						\draw[rounded corners] (7.4,2.5) rectangle (10.4,1.5);
						\node at (8.0,2) [vertex] {};
						\node at (8.6,2) [vertex] {};
						\node at (9.2,2) [vertex] {};
						\node at (9.8,2) [vertex] {};
						
						\draw (7.4,4.5) -- (5.6,2);
						\draw (7.4,4.5) -- (8.0,2);
						\draw (5.6,2) -- (8.0,2);
						\draw [color=green](6.2,4.5) -- (5.6,2);
						\draw [color=green](5.6,4.5) -- (5.6,2);
						
						\draw[-, shorten >=2pt, shorten <=2pt, bend right=45, thick, dashed] (3.5,1.8)--(5.5,4.65);
						\draw[-, shorten >=2pt, shorten <=2pt, bend right=45, thick, dashed] (5.2,1.8)--(6.5,4.65);
						\draw[-, shorten >=2pt, shorten <=2pt, bend right=45, thick, dashed] (5.5,4.65)--(6.5,4.65);
						\draw[-, shorten >=2pt, shorten <=2pt, bend right=45, thick, dashed] (3.5,1.8)--(5.2,1.8);
						
						\draw[rounded corners] (14,5) rectangle (17,4);
;p0,p						\node at (14.6,4.5) [vertex,color=blue] {};
						\node at (16.4,4.5) [vertex] {};
						\node at (15.8,4.5) [vertex] {};
						\node at (15.2,4.5) [vertex] {};
						\node at (14.6,4.5) [label=above:$v_i$] {};

						\draw[rounded corners] (12.2,2.5) rectangle (15.2,1.5);
						\node at (12.8,2) [vertex,,color=blue] {};
						\node at (13.4,2) [vertex] {};
						\node at (14.0,2) [vertex] {};
						\node at (14.6,2) [vertex] {};
						\node at (12.6,2) [label=above:$v_j$] {};	
						
						\draw[rounded corners] (16.4,2.5) rectangle (19.4,1.5);
						\node at (17.0,2) [vertex] {};
						\node at (17.6,2) [vertex] {};
						\node at (18.2,2) [vertex] {};
						\node at (18.8,2) [vertex] {};
						
						\draw (16.4,4.5) -- (14.6,2);
						\draw  (16.4,4.5) -- (17.0,2);
						\draw (14.6,2) -- (17.0,2);
						\draw [color=green](14.6,4.5) -- (14.6,2);
						\draw [color=green](16.4,4.5) -- (12.8,2);
						
						\draw[-, shorten >=2pt, shorten <=2pt, bend right=45, thick, dashed] (12.5,1.8)--(14.5,4.65);
						\draw[-, shorten >=2pt, shorten <=2pt, bend right=45, thick, dashed] (14.2,1.8)--(16.2,4.65);
						\draw[-, shorten >=2pt, shorten <=2pt, bend right=45, thick, dashed] (14.5,4.65)--(16.2,4.65);
						\draw[-, shorten >=2pt, shorten <=2pt, bend right=45, thick, dashed] (12.5,1.8)--(14.2,1.8);
					\end{tikzpicture}
					
					\caption{An illustration of the proof of Claim~\ref{c4}, where the dotted region must induce a star and thus has at most three edges.}
					\label{figure2}	
				\end{center}
			\end{figure}
			
			\begin{itemize}
				\item[($\dagger$)] For any $  v_i,v_j \in V(G) $, $G-\{v_i,v_j\}$ has a matching of size two.
			\end{itemize}
			
			Next  we separate the proof into two cases in terms of the maximum degree.
			
			\textbf{Case a.} $\Delta(G) \ge 6 .$
			
			Without loss of generality, suppose $ 6\le \deg_{G}(v_1) \le 8$. Let $G^{(1)} :=G-v_1$. Obviously, 
			\[ \label{eqno1_10} e(G^{(1)})= e(G)-\deg_{G}(v_1)\ge 18-8=10.  \tag{6.11} \]
			We claim that $G^{(1)}$ has no matching of size three. 
			Indeed, if $G^{(1)}$ has a matching $M$ of size three, then since $G[V_i,V_j]$ has no matching of size three, $N(v_1)\setminus V(M)\neq \emptyset$.
			Thus there exists $w\in V(G)$ such that $v_1w\in E(G)$ which together with $M$ forms a matching of size four in $G$, a contradiction.

			\begin{fact}\label{f7}
				For any $v_2\in V(G)$, $ v_2 \ne v_1 $ and $i,j \in [3]$, we have $\deg_{G^{(1)}[V_i,V_j]}(v_2) \le 2$ and $\deg_{G^{(1)}}(v_2) \le 3$.
			\end{fact}			
			\begin{proof}By ($\dagger$),  $G-\{v_1,v_2\}$ has a matching $M'$ of size two.	
				
				Towards a contradiction, suppose $\deg_{G^{(1)}[V_i,V_j]}(v_2) \ge 3$, then there exists $w'\in V(G^{(1)})$ such that $v_2w'\in E(G^{(1)})$ together with $M'$  forming a matching of size three in $G^{(1)}$, a contradiction. We have $\deg_{G^{(1)}[V_i,V_j]}(v_2) \le 2$ and thus $\deg_{G^{(1)}}(v_2) \le 4$.
				
				Next suppose $\deg_{G^{(1)}}(v_2) = 4$, then for any choice of $M'$, we must have $ M'\subseteq E(G[N_{G^{(1)}}(v_2)])$. If not, suppose   $M'$ covers $v'\notin N_{G^{(1)}}(v_2)$. Then  there exists $w'$ such that $v_2w'\in E(G^{(1)})$ together with $M'$  forming a matching of size three in $G^{(1)}$ (see Figure~\ref{figur3}), which is a contradiction.  Thus $ E(G^{(1)}-\{v_2\})\subseteq E(G[N_{G}(v_2)])$. Note that $e(G[N_{G^{(1)}}(v_2)])\le4$. Hence, $ e(G^{(1)})\le 4+4<10 $,  which contradicts~\eqref{eqno1_10}. \end{proof}
			
			\begin{figure}[h]
				\begin{center}
					\begin{tikzpicture}
						[inner sep=2pt,
						vertex/.style={circle, draw=black!50, fill=black!50},
						rect/.style={rectangle, inner sep=7,minimum size=0.8},
						]

						\draw[rounded corners] (5,5) rectangle (8,4);
						\node at (5.6,4.5) [vertex] {};
						\node at (7.4,4.5) [vertex] {};
						\node at (6.8,4.5) [vertex] {};
						\node at (6.2,4.5) [vertex,color=blue] {};
						\node at (6.2,4.5) [label=above:$v_2$] {};

						\draw  (4.7,2) circle (0.95);
						\node at (4.3,1.6) [vertex] {};
						\node at (4.3,2.4) [vertex] {};
						\node at (5.1,1.6) [vertex] {};
						\node at (5.1,2.4) [vertex] {};
						
						\draw  (8.3,2) circle (0.95);
						\node at (7.9,1.6) [vertex] {};
						\node at (7.9,2.4) [vertex] {};
						\node at (8.7,1.6) [vertex] {};
						\node at (8.7,2.4) [vertex] {};
						
						\draw (6.2,4.5) -- (7.9,1.6);
						\draw (6.2,4.5) -- (7.9,2.4);
						\draw (6.2,4.5) -- (5.1,1.6);
						\draw (6.2,4.5) -- (5.1,2.4);
						
						\draw[-, shorten >=2pt, shorten <=2pt, bend right=45, thick, dashed] (7.9,1.6)--(5.1,1.6);
						\draw[-, shorten >=2pt, shorten <=2pt, bend right=45, thick, dashed] (7.9,1.6)--(5.1,2.4);
						\draw[-, shorten >=2pt, shorten <=2pt, bend right=45, thick, dashed] (7.9,2.4)--(5.1,2.4);
						\draw[-, shorten >=2pt, shorten <=2pt, bend right=45, thick, dashed] (7.9,2.4)--(5.1,1.6);
					\end{tikzpicture}
					
					\caption{Any edge except the four dotted ones would result a matching of size two not completely in $  N_{G^{(1)}}(v_2) $.}
					\label{figur3}	
				\end{center}
			\end{figure}
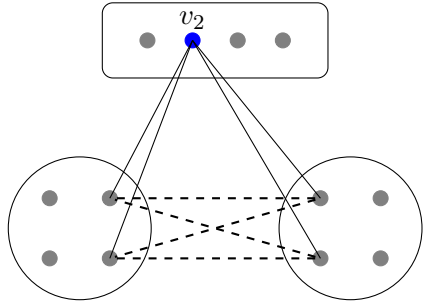

			Now we resume the proof of \textbf{Case a}.
			Suppose $\{w_1w_2,w_3w_4\}$ is a matching of size two in $ G^{(1)} $. Since  $G^{(1)}$ has no matching of size three, $e(G[V(G^{(1)})\setminus \{w_1,w_2,w_3,w_4\}])=0$. 
			Note that $ G^{(1)} $ has no vertex cover of size three, otherwise by Fact~\ref{f7}, $ e(G^{(1)})\le3\times 3=9 $, which contradicts~\eqref{eqno1_10}. 
			Thus every $w_i$, $i\in [4]$, is incident an edge outside $\{w_1,w_2,w_3,w_4\}$, implying that 
			 there exist $ u_1,u_2,u_3,u_4 $ such that $w_iu_i\in E(G^{(1)})$. 
			By the constraint of the matching number, $ u_1=u_2$, $u_3=u_4 $ and $ \deg_{G^{(1)}}(w_i)=2$, $i\in[4] $, i.e.  $    G^{(1)}$ is the union of two  triangles $w_1w_2u_1$ and $w_3w_4u_2$.
			Hence, $ e(G^{(1)})\le 3+3<10 $,  contradicting~\eqref{eqno1_10}.

			\medskip		
			\textbf{Case b.}  $\Delta(G) \le 5 .$
			
			By Claim~\ref{c3}, let $v_{i}w_{i}\in E(G)$, $i\in [3] $ be a cross matching, $V'=\{ v_{1},v_{2},v_{3},w_{1},w_{2},w_{3}\}$ and $V''=(V_1\cup V_{2} \cup V_3) \setminus V'$. Since there is no matching of size four in $G$, $e(G[V''])=0$ and for $\deg_{G[V''\cup\{v_{i}\}]}(v_{i})$ and  $\deg_{G[V''\cup\{w_{i}\}]}(w_{i})$, either both of them are one with the same neighborhood in $V''$, or one of them is zero.  
			\begin{claim} \label{c5}
				$ e(G[V''\cup\{a_{i}\}])=0$, $i\in [3] $, where $a_i\in \{v_i,w_i\}$.
			\end{claim}
			\begin{proof} Note that there exists $a_i\in \{v_i,w_i\}$, $a_j\in \{v_j,w_j\}$ such that $ e(G[V''\cup\{a_i\}])=e(G[V''\cup\{a_j\}])=0$. If not, suppose $G$ has two triangles, $w_1v_1a$ and $w_2v_2b$, where $a,b \in V'',a\not= b$. Then $G[V_i,V_j]$ which contains $v_{3}w_{3}$ has a matching of size three, a contradiction.
				
				Suppose  $e(G[V''\cup\{w_1\}])=e(G[V''\cup\{w_2\}])=0$.  Then there exists $a_3\in \{v_3,w_3\}$ such that $ e(G[V''\cup\{a_3\}])=0$. Suppose instead, i.e. 	$e(G[V''\cup\{v_{3}\}])=e(G[V''\cup\{w_{3}\}])=1 $.
				Obviously, $e(G[v_3,w_1,w_2,w_3])\le 5$. Since $\deg_G(v_i) \le 5$ for $i\in[2]$, we get $ e(G)\le 5+5\times2+2=17<18,$ a contradiction.\end{proof}

			By Claim~\ref{c5}, suppose $ e(G[V''\cup\{w_{i}\}])=0$, $i\in [3] $. Note that $ e(G-\{v_1,v_2,v_3\})=e(G[\{w_1,w_2,w_3\}])$ $\le3 $.
			Since  $\deg_{G}(v_i)\le 5$, $i\in [3] $, $e(G)\ge 18$, $ w_1,w_2,w_3 $ form a triangle and  $\deg_{G}(v_i)= 5$, $i\in [3] $.  This further implies that $ \{v_1,v_2,v_3\} $ is an independent set. Thus $\deg_{G[V']}(v_i)\le 2$ and $ e(G[V''\cup\{v_{i}\}]) \ge3$, $ i\in[3]$,  implying that there exists a matching $M:=\{w_1w_2,v_1x,v_2y,v_3z	\}$ in $G$,  where $x\in V''$, $y\in  V''\setminus \{x\}$, $z\in  V'' \setminus \{x,y\}$,  a contradiction.  Consequently,  $\Delta(G) \le 5$ is impossible.
			
			Claim~\ref{c4} is proved.
		\end{proof}

		By Claim~\ref{c4}, $G$ has a vertex cover  $\{ v_{1},v_{2},v_{3}\}$.
		Note that $v_{1},v_{2},v_{3}$ can not be in the same $V_i$ as otherwise $e(G)\le 8+8=16<18$.
		By Claim~\ref{c2}, $e(Q)\le |E_{v_{1},v_{2},v_{3}}|$.
		If $ v_{i}\in V_i$, $i\in[3]$, then we have  $  e(Q)\le 64-27=37 .$
		If $ v_{1},v_{2}\in V_1,v_{3}\in V_3$, then we have $  e(Q)\le 64-24=40 .$

		\textbf{Case 5.} $ e(G)=21$ .
		
		By Claim~\ref{c1}, $G$ has a vertex cover $\{k_{i_{1}},h_{i_{2}},g_{i_{3}}\}$. By Claim~\ref{c2}, 
		\[e(Q)=|E_{k_{i_{1}},h_{i_{2}},g_{i_{3}}}|\le37.\]

		\medskip
		Suppose the number of triples in each of cases above are $x_1,x_2,x_3,x_4,x_5$ respectively,  then $x_1+x_2+x_3+x_4+x_5=\binom{m_1}{3}$. 
		Note that by counting edges via triples e.g. $Y_1,Y_2, Y_3$, all $YYU$ edges reflected by $G$ are counted exactly $m_1-2$ times. Putting everything together we get 
		\[\begin{split}y_{4}&\le 	2 (64x_1+52x_2+48x_3+40x_4+ 37x_5)+\frac{1}{m_1-2}(16x_2+17x_3+20x_4+21x_5)2|U|+O(n^2) \\
			&\le 74\binom{m_1}{3} + 54x_1+30x_2+22x_3+6x_4 +14\binom{m_1}{2}|U| - (21x_1+5x_2+4x_3+x_4)\frac{2|U|}{m_1} + O(n^2),
		\end{split}\]
		where the error term comes from the remaining edges in  $ YYU $ which are not reflected in $G$.
		By $|U|/m_1\ge 3$, we have $54x_1+30x_2+22x_3+6x_4 - (21x_1+5x_2+4x_3+x_4)\frac{2|U|}{m_1}\le 0$ and we are done.
	\end{proof}

	We conclude this section by deriving Theorem~\ref{edge th2} from Theorem~\ref{edge th1} and the regularity method.
	
	\begin{proof}[Proof of Theorem~\ref{edge th2}]
		Define constants such that $0<1/n\ll \varepsilon'\ll d, \varepsilon$. By Theorem~\ref{ineg}, $ H $ contains  a $Y$-tiling $M'$ of size $ \varepsilon' n $. Apply the regularity lemma to $ H':=H[V(H)\setminus V(M')] $, let $ V_{1},\dots,V_{K} $ be the clusters of the partition obtained. Note that $ e(H')\ge \binom{n-4\varepsilon' n}{3}-\binom{(n-4\varepsilon' n)(1-\alpha)}{3}+o(n^3) $. By Lemma ~\ref{deg lem}, the reduced hypergraph $R$ satisfies $|E(R)|\ge\binom{K}{3}-\binom{K-\alpha K}{3}+o(K^3).$	Applying Theorem~\ref{edge th1}, we get a $\{Y,E\}$-tiling $\cT=\{Y_1,Y_2,\dots,Y_{m_{1}},E_1,E_2,\dots,E_{m_{2}}\} $ in  $ R$, such that $4m_1+3m_2\ge 4\alpha K$.

		For each  $ Y_i\in  \cT$  with edges $i_1i_2i_3$, $i_2i_3i_4$, $E_j=\{j_1,j_2,j_3\}\in  \cT $, let $F_{Y}:=\lceil (1-2\varepsilon')m\rceil$, $F_{E}:=\frac{3}{4}\lceil (1-2\varepsilon')m\rceil$, where $m$ is the size of each cluster. Now construct a $Y$-tiling $M''$  in $H'$, by greedily adding copies of $Y$ to $M''$ using the regularity.
		
		Recall that the Tur\'an density of every $k$-partite $k$-graph is zero. Let $H'_{1}:=H'[V_{i_1},V_{i_2},V_{i_3}]$, $H'_{2}:=H'[V_{i_2},V_{i_3},V_{i_4}]$ and $H'_{3}:=H'[V_{j_1},V_{j_2},V_{j_3}]$. 
		In $ H'_{1} $ and $ H'_{2} $, 
		we repeatedly find disjoint copies of $Y$ each of which intersects each of $V_{i_2}$ and $V_{i_3}$ in one vertex and $V_{i_1}$ (or $V_{i_4}$) in two vertices, until $M$ contains precisely $F_Y$ disjoint copies of $Y$ in $ H'[V_{i_1},V_{i_2},V_{i_3},V_{i_4}]$ (see Figure~\ref{figure:main_thm}). In $ H'_{3} $, we find a copy of $ K_{4,4,4} $ and decompose it into three copies of $ Y $.
		
		Note that at each stage of this process, the number of vertices in each $V_i\in V(R)$ that would be covered by $M''$ is at most
		$(1-2\varepsilon')m+2\le (1-\varepsilon')m $.	Obtained by deleting the vertices which has been covered by $M''$,  the subgraphs of $H'_{i}$ for $i\in [3]$ are still regular. 
		Thus 
		it is possible to successively add copies of $ Y$ to $M''$ in order to obtain a $Y$-tiling $M''$ as desired.
		Note that the size of $M''$ is 
		\[\begin{split}
			{m_1} F_{Y} + {m_2} F_{E}&\ge (1-2\varepsilon')m(m_1+\frac{3}{4}m_2)\ge (1-2\varepsilon')m\alpha K\\&\ge (1-2\varepsilon')\alpha (n-4\varepsilon' n)\ge(1-6\eps')\alpha n\ge \alpha n-\varepsilon' n,\end{split}\] where we used $ \alpha \le 1/7 $.

Consider the union  of $ M' $ and $ M'' $.	Thus $H$ has a $Y$-tiling of size at least $ \alpha n$, i.e. covering  at least  $ 4\alpha n$ vertices.
	\end{proof}

	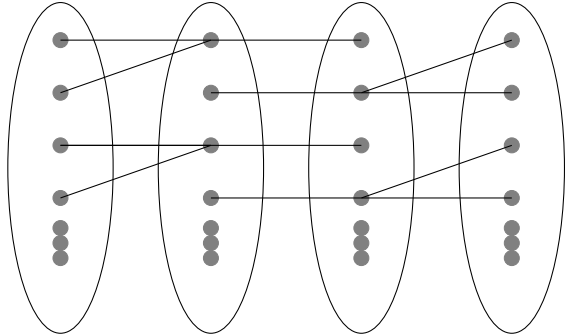
\begin{figure}[h]
		
		\begin{tikzpicture}
			[inner sep=2pt,
			vertex/.style={circle, draw=black!50, fill=black!50},
			]
			
			\draw (-1,0) ellipse (0.7 and 2.2);
			\draw (1,0) ellipse (0.7 and 2.2);
			\draw (3,0) ellipse (0.7 and 2.2);
			\draw (5,0) ellipse (0.7 and 2.2);
			\node at (-1,1.7) [vertex] {};
			\node at (-1,1) [vertex] {};
			\node at (-1,0.3) [vertex] {};
			\node at (-1,-0.4) [vertex] {};
			\node at (-1,-0.8) [vertex] {};
			\node at (-1,-1) [vertex] {};
			\node at (-1,-1.2) [vertex] {};
			\node at (1,1.7) [vertex] {};
			\node at (1,1) [vertex] {};
			\node at (1,0.3) [vertex] {};
			\node at (1,-0.4) [vertex] {};
			\node at (1,-0.8) [vertex] {};
			\node at (1,-1) [vertex] {};
			\node at (1,-1.2) [vertex] {};
			\node at (3,1.7) [vertex] {};
			\node at (3,1) [vertex] {};
			\node at (3,0.3) [vertex] {};
			\node at (3,-0.4) [vertex] {};
			\node at (3,-0.8) [vertex] {};
			\node at (3,-1) [vertex] {};
			\node at (3,-1.2) [vertex] {};
			\node at (5,1.7) [vertex] {};
			\node at (5,1) [vertex] {};
			\node at (5,0.3) [vertex] {};
			\node at (5,-0.4) [vertex] {};
			\node at (5,-0.8) [vertex] {};
			\node at (5,-1) [vertex] {};
			\node at (5,-1.2) [vertex] {};
			\draw (-1,1.7) -- (1,1.7);
			\draw (-1,1) -- (1,1.7);
			\draw (1,1.7) -- (3,1.7);
			\draw (1,1) -- (3,1);
			\draw (3,1) -- (5,1.7);
			\draw (3,1) -- (5,1);
			
			\draw (-1,0.3) -- (3,0.3);
			\draw (-1,0.3) -- (1,0.3);
			\draw (-1,-0.4) -- (1,0.3);
			\draw (1,-0.4) -- (3,-0.4);
			\draw (3,-0.4) -- (5,0.3);
			\draw (3,-0.4) -- (5,-0.4);
			
		\end{tikzpicture}
		\caption{A $ Y $-tiling in the clusters corresponding to a copy of $ Y $ in the reduced graph}	
		\label{figure:main_thm}
	\end{figure}

	\section{The minimum degree condition of $Y_{k,b}$-tiling}
	For $k>b\ge 0$, recall that  $Y_{k,b}$ is the $k$-graph consisting of two edges that intersect in exactly $b$ vertices. 
For $ p>0 $,	fix two $k$-graphs $F$ of order $p$ and $H$, let $\mathcal F_{F,H} \subseteq \binom {V(H)}{p}$ be the family of $p$-sets in $V(H)$ that span a copy of $F$.
	A fractional $F$-tiling in $H$ is a function $\omega: \mathcal F_{F,H} \to [0,1]$ such that for each $v\in V(H)$ we have $\sum_{v\in e\in  \mathcal F_{F,H}}\omega(e) \le 1$. Then $\sum_{e\in  \mathcal F_{F,H}}\omega(e)$ is the size of $\omega$. 
	Such a fractional $F$-tiling is called perfect if it has size $n/p$.
	
	When $F$ is a single edge, a fractional $F$-tiling is also called a fractional matching.
	The size of the largest fractional matching in a $k$-graph $H$ is denoted by $\nu^*(H)$. Its dual problem is to find a minimum fractional vertex cover $ \tau^*(H)=\sum_{v\in V}\omega(v)$ over all functions $ \omega:V\to [0,1] $ such that for each $e\in E$ we have $\sum_{v\in e}\omega(v) \ge 1$. Then by the Duality Theorem, $ \nu^*(H)=\tau^*(H)$.
	Let $f_d^s(F,n)$ be the minimum integer $m$ so that every $n$-vertex $k$-graph $H$ with $\delta_d(H)\ge m$ has a fractional $F$-tiling of size $s$. In particular,  $\delta_0(H)=e(H)$.
	
	Alon et al.~\cite{AFHRRS} proved the following result for fractional matchings.
	We extend it to fractional $Y$-tilings by similar arguments.
	Given a $k$-graph $H$ and a set $L$ with $|L|<k$, the link graph $H(L)$ is defined as a $(k-|L|)$-graph with vertex set $V(H)\setminus L$ and edge set $\{e\setminus L: L\subseteq e\in E(H)\}$.
	\begin{thm}\label{tran}
		For all $k \ge 3$, $1 \le d < b \le k-1$, and $n \ge k$,
		\[
		f_d^{n/(2k-b)}(Y_{k, b}, n) \le f_0^{n/(2k-b)}(Y_{k-d, b-d}, n-d).
		\]
	\end{thm}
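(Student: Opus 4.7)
The plan is to derive the fractional $Y_{k,b}$-tiling in $H$ from the fractional $Y_{k-d,b-d}$-tilings in the $d$-links of $H$. Write $s = n/(2k-b)$ throughout. By the hypothesis $\delta_d(H) \ge t := f_0^{s}(Y_{k-d,b-d}, n-d)$, for every $d$-subset $L \subseteq V(H)$ the link $H(L)$ is a $(k-d)$-graph on $n-d$ vertices with $e(H(L)) = \deg_H(L) \ge t$; hence by the definition of $f_0^{s}$, $H(L)$ admits a fractional $Y_{k-d,b-d}$-tiling $\omega_L$ of size $s$.

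The central observation is that copies of $Y_{k-d,b-d}$ in $H(L)$ biject with copies of $Y_{k,b}$ in $H$ whose $b$-intersection contains $L$, via $Y' \mapsto Y = Y' \cup L$, where each edge of $Y'$ is extended by adjoining $L$ (using $d < b$). This permits patching the $\omega_L$'s into a candidate fractional $Y_{k,b}$-tiling by setting
\[
\omega(Y) \;:=\; \frac{1}{\binom{n}{d}} \sum_{\substack{L \subseteq I(Y)\\ |L|=d}} \omega_L(Y \setminus L),
\]
where $I(Y)$ denotes the $b$-intersection of the two edges of $Y$. Swapping the order of summation over $L$ and $Y$ and using $\sum_{Y'} \omega_L(Y') = s$ for each $L$ shows that $\omega$ has total size exactly $s$, as desired.

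The main obstacle is the vertex constraint $\sum_{v \in Y} \omega(Y) \le 1$. Splitting by whether $L$ contains $v$, the terms with $L \ni v$ contribute $sd/n$, while the terms with $L \not\ni v$ depend on the per-vertex loads of each $\omega_L$; these balance on average across $v$ but may be uneven for individual vertices, so the direct primal construction falls just short. To close the gap, I would pass to the dual LP: by the Duality Theorem it suffices to show that every fractional vertex cover $\sigma \colon V(H) \to \mathbb{R}_{\ge 0}$ of $Y_{k,b}$-copies in $H$ satisfies $\sigma(V) \ge s$. For each $d$-subset $L$ with $\sigma(L) < 1$, the scaled restriction $\sigma|_{V \setminus L}/(1-\sigma(L))$ is a fractional vertex cover of $Y_{k-d,b-d}$-copies in $H(L)$ (since applying the $Y_{k,b}$-cover condition to $Y' \cup L$ yields $\sigma(V(Y')) \ge 1 - \sigma(L)$ for every such $Y'$), so by LP duality in $H(L)$ its size is at least $s$, giving $\sigma(V) \ge s + (1-s)\sigma(L)$. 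Averaging this inequality over $L$ (using $\mathbb{E}_L[\sigma(L)] = (d/n)\sigma(V)$) and handling the degenerate case $\sigma(L) \ge 1$ separately---the strategy pioneered by Alon et al.~\cite{AFHRRS} for fractional matchings---should deliver the target bound $\sigma(V) \ge s$ and close the argument.
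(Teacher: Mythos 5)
Your dual-LP reduction is right up to the last step, and it matches the paper's overall structure (which, like you, adapts the Alon--Frankl--Huang--R\"odl--Ruci\'nski--Sudakov argument). In particular, the key inequality you derive,
\[
\sigma(V)\;\ge\; s + (1-s)\,\sigma(L) \quad\text{for every $d$-set $L$ with $\sigma(L)<1$,}
\]
is exactly what the paper exploits. The gap is in how you propose to finish. Averaging over $L$ does \emph{not} deliver $\sigma(V)\ge s$: since $s=n/(2k-b)>1$, the term $(1-s)\sigma(L)$ is non-positive, so each instance of your inequality is \emph{weaker} than the target, and averaging weaker inequalities cannot recover it. Concretely, using $\mathbb{E}_L[\sigma(L)]=(d/n)\sigma(V)$ gives
\[
\sigma(V)\bigl(1+(s-1)\tfrac{d}{n}\bigr)\;\ge\; s,\qquad\text{i.e.}\qquad \sigma(V)\;\ge\;\frac{s}{\,1+\tfrac{d}{2k-b}-\tfrac{d}{n}\,},
\]
which is strictly less than $s$ once $n>2k-b$ -- precisely the regime of interest. (And for $L$ with $\sigma(L)\ge 1$ the derived inequality gives nothing, so ``handling that case separately'' does not help either.)

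The fix -- and what Alon et al.\ and the paper actually do -- is not to average but to \emph{choose $L$ to be the $d$ vertices of smallest $\sigma$-weight and then shift}. Assume WLOG these $d$ smallest values are all equal to some $x$ (replace them by their common average if not; this changes neither $\sigma(V)$ nor the set $L$). Since $\sigma(V)<s$ one has $x<1/(2k-b)$, and the affine rescaling $\sigma'=(\sigma-x)/(1-(2k-b)x)$ remains a fractional vertex cover of the $(2k-b)$-sets spanning copies of $Y_{k,b}$, has the same total $\sum_v\sigma'(v)<n/(2k-b)$, and now satisfies $\sigma'(v)=0$ for all $v\in L$. Applying your inequality to $\sigma'$ with this $L$ gives $\sigma'(V)\ge s$, a contradiction. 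In short: the dual structure of your proposal is correct and matches the paper's, but the averaging step fails; you need to choose $L$ optimally and apply the shift/scale trick.
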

	
	\begin{proof}
		Let $H$ be an $n$-vertex $k$-graph with $\delta_d(H) \ge f_0^{n/(2k-b)}(Y_{k-d, b-d}, n-d)$ and
		let $H'$ be a $(2k-b)$-graph on $V(H)$, where $E(H') = \mathcal F_{Y_{k, b},H}$.
		By the minimum $d$-degree assumption, for every $d$-set $L$ in $H$, $e(H(L)) \ge \delta_d(H)\ge f_0^{n/(2k-b)}(Y_{k-d, b-d}, n-d)$, 
		then there is a fractional $Y_{k-d, b-d}$-tiling $\omega_1: \mathcal F_{Y_{k-d, b-d}, H(L)} \to [0,1]$ such that for each $v\in V$ we have $\sum_{v\in e\in \mathcal F} \omega_1(Y) \le 1$ and $\sum_{e\in \mathcal F} \omega_1(Y) \ge n/(2k-b)$. Let $H'(L)$ be a $(2k-b-d)$-graph on $V(H)\setminus L$, where $E(H'(L)) = \mathcal F_{Y_{k-d, b-d}, H(L)}$.
		We claim that $\omega_1$ is also a fractional matching of $H'(L)$.
		Indeed, every copy of $Y_{k-d, b-d}$ in $H(L)$ together with $L$ spans a copy of $Y_{k,b}$ in $H$, that is, $\mathcal F_{Y_{k-d, b-d}, H(L)} \subseteq H'(L)$ and thus the claim follows.
		This implies that for every $d$-set $L$, $\nu^*(H'(L))\ge n/(2k-b)$.
		
		We will assume that there is no perfect fractional $Y_{k, b}$-tiling in $H$.
		Then by the definition of $H'$, there is no perfect fractional matching in $H'$.
		We will show that for a particular choice of $L$, $\nu^*(H'(L))<n/(2k-b)$, contradicting the previous paragraph.
		
		As $\tau^*(H') = \nu^*(H')$, there is a function $\omega: V \to [0,1]$ such that $\sum_{v\in V} \omega(v) < n/(2k-b)$ and, for every $e \in E(H')$, we have $\sum_{v\in e} \omega(v) \ge 1$. 
		Next, we define a $(2k-b)$-graph
		\[
		H'_{\omega} := \left \{e\in \binom{V}{2k-b}: \sum_{v\in e}\omega(v)\ge1 \right \}.
		\]
		Suppose $L$ is the set of $d$ vertices with the smallest weights in $\omega$. 
		Without loss of generality, we may assume that the $d$ lowest values of $\omega (x)$ are all equal to each other, since otherwise we could replace them by their average. (Obviously, this would not change $\sum_{v\in V} \omega(v)$ nor the set $L$.) 
		Note that the minimum $d$-degree $\delta_d(H'_{\omega})$ is achieved by the $d$-set $L$. 
		Let $H'_{\omega}(L)$ be the neighborhood of $L$ in $H'_{\omega}$, that is, a $(2k-b-d)$-graph on the vertex set $V \setminus L$ and with the edge set
		\[
		\left \{S \in \binom{V-L}{2k-b-d}: S\cup L\in E(H'_{\omega}) \right \}.
		\]
		Let $x = \min_{v\in V} \omega(v)$ and observe that $x < 1/(2k-b)$. 
		If $x > 0$, then apply the linear map
		\[
		\omega' = \frac{\omega-x}{1-(2k-b)x}. 
		\]
		Then, still $\sum_{v\in V} \omega'(v) < n/(2k-b)$ and $H'_w = H'_{w'}$.
		Moreover, for every $v\in L$, we have $\omega'(v) = 0$.
		It follows that the function $\omega'$ restricted to the set $V\setminus L$ is a fractional vertex cover of $H'(L)$, so $\nu^*(H'(L))=\tau^*(H'(L))<n/(2k-b)$, which completes the proof.
	\end{proof}
	
	The following lemma allows us to convert an almost perfect fractional $Y_{k,b}$-tiling into an almost perfect integer $Y_{k,b}$-tiling. Its proof is an application of the regularity method and similar to that of Lemma~\ref{pat lem} and Theorem~\ref{edge th2}.
	Here we omit the proof.
	\begin{lemma}\label{ftot}
		Suppose $k \ge 3$ and $1 \le d < b \le k-1$, and $ 0<1/n\ll\varepsilon'<\varepsilon\ll \eta $. Let $ H $ be a $ k $-graph on $ n $ vertices with
		\[
		\delta_{d}(H)\ge	f_d^{n/(2k-b)-\varepsilon'n}(Y_{k, b}, n) +\eta\binom{n}{k-d}.
		\] Then $ H $ contains a $Y_{k,b}$-tiling covering all but at most $\varepsilon n$ vertices.
	\end{lemma}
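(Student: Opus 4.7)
The plan is to follow the template of the proofs of Lemma~\ref{pat lem} and Theorem~\ref{edge th2}: apply the regularity lemma to $H$, find a near-optimal fractional $Y_{k,b}$-tiling in the reduced $k$-graph, and then ``blow up'' each supported copy of $Y_{k,b}$ into many vertex-disjoint copies of $Y_{k,b}$ in $H$ using the regularity of the underlying polyads.

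First I would set up a constant hierarchy of the form $1/n \ll 1/r, \delta \ll \min\{\delta_k, 1/t\} \le \delta_k, \gamma \ll d' \ll \varepsilon' < \varepsilon \ll \eta$ and apply Theorem~\ref{reglm} to $H$ to obtain a family of partitions $\mathcal P$ with clusters $V_1, \dots, V_{a_1}$ of common size $m = n/a_1$ and the associated reduced $k$-graph $R = R(H, \mathcal P, d')$. By Lemma~\ref{deg lem}, all but at most $\gamma \binom{a_1}{d}$ of the $d$-sets $S \in \binom{V(R)}{d}$ satisfy $\deg_R(S) \ge (\mu + \eta/2)\binom{a_1}{k-d}$, where $\mu := f_d^{n/(2k-b) - \varepsilon' n}(Y_{k,b}, n)/\binom{n}{k-d}$; after removing the few low-degree $d$-sets one may assume $\delta_d(R) \ge (\mu + \eta/4)\binom{a_1}{k-d}$. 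Invoking the (natural) scaling and monotonicity of $f_d^s$ in its two arguments, this yields $\delta_d(R) \ge f_d^{a_1/(2k-b) - \varepsilon'' a_1}(Y_{k,b}, a_1) + (\eta/8)\binom{a_1}{k-d}$ for some $\varepsilon'' \in (\varepsilon', \varepsilon/2)$, so by the very definition of $f_d^s$ the reduced graph $R$ admits a fractional $Y_{k,b}$-tiling $\omega$ of size at least $a_1/(2k-b) - \varepsilon'' a_1$.

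To produce an integer tiling of $H$, I would proceed as in the final step of the proof of Theorem~\ref{edge th2}. For each $Y' \in \mathrm{supp}(\omega)$, the $2k-b$ corresponding clusters carry two regular $k$-polyads (one for each edge of $Y'$ in $R$), each of density at least $d'$. Using this regularity together with the structural template recorded in Facts~\ref{f1} and~\ref{f222}, one can greedily extract approximately $\lfloor \omega(Y') \cdot m \rfloor$ vertex-disjoint copies of $Y_{k,b}$ in $H$ whose distribution across the $2k-b$ clusters respects the cluster types dictated by $Y'$ (one vertex per ``shared'' cluster and $k-b$ vertices per ``non-shared'' cluster per copy). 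Because $\sum_{Y' \ni V_i} \omega(Y') \le 1$ for every cluster $V_i$, processing the elements of $\mathrm{supp}(\omega)$ one at a time and deleting used vertices leaves every cluster with at least $(1 - O(\varepsilon''))m$ free vertices at each intermediate stage, so the greedy extraction inside each supported bundle succeeds until exhaustion. Summing over $\mathrm{supp}(\omega)$, the total number of vertices covered is at least $(2k-b)\sum_{Y'} \lfloor \omega(Y') m \rfloor \ge (2k-b)(a_1/(2k-b) - \varepsilon'' a_1) m - O(a_1) \ge n - \varepsilon n$, as required.

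The main obstacle is the ``fractional in $R$ to integer in $H$'' conversion: one must verify both that the claimed scaling of $f_d^s$ really lets the $H$-level degree hypothesis descend to an equally strong fractional-tiling condition on $R$, and that within each supported bundle of $2k-b$ clusters one can indeed harvest $\omega(Y')m - o(m)$ disjoint $Y_{k,b}$-copies with the correct vertex distribution across clusters. Both points are handled by routine but careful regularity bookkeeping, entirely parallel to the greedy extraction that appears at the end of the proof of Theorem~\ref{edge th2}.
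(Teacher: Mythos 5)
Your approach follows the same template the paper ascribes to this lemma --- regularity lemma, degree transfer to the reduced graph via Lemma~\ref{deg lem}, fractional $Y_{k,b}$-tiling in $R$, and regularity blow-up --- and since the paper omits the proof (saying only that it is ``an application of the regularity method and similar to that of Lemma~\ref{pat lem} and Theorem~\ref{edge th2}''), at that level your proposal matches the intended argument.

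The step you should not wave through is the one you label ``natural scaling and monotonicity of $f_d^s$.'' After Lemma~\ref{deg lem} you have essentially the same \emph{density} lower bound on $\delta_d(R)$, but $R$ has only $a_1\le t$ vertices; to invoke the definition of $f_d^{s'}(Y_{k,b},a_1)$ on $R$ you need the density threshold $f_d^{\alpha m}(Y_{k,b},m)/\binom{m}{k-d}$ not to increase (beyond the $\eta$-slack) when $m$ drops from $n$ to $a_1$. This is not a formal consequence of the definition: $f_d^s(F,n)$ is defined for each $n$ separately, and a priori the sequence of densities need not be eventually monotone. The proof of Lemma~\ref{pat lem} sidesteps exactly this by phrasing the hypothesis via the $\limsup$ quantity $t(k,d,\ell)$ together with the random-partition Lemma~\ref{perfectdeg lem}; the $\limsup$ supplies an eventual density bound valid at every sufficiently large constant scale $Q$, which is what makes the transfer legal. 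To repair your argument you should either (i) observe that in the only application (Theorem~\ref{yth}) the quantity $f_d^{s}(Y_{k,b},n)$ is replaced by the explicit, cleanly scaling upper bound obtained from Theorem~\ref{tran} and Theorems~\ref{ineg}/\ref{edge th2}, and run the proof with that bound instead; or (ii) reformulate the hypothesis of the lemma with a $\limsup$-type threshold, exactly as is done for $t(k,d,\ell)$.

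The blow-up step is also slightly miscounted. A copy of $Y_{k,b}$ harvested inside a single regular polyad sits on only $k$ clusters, so its $2k-b$ vertices necessarily double up in the $k-b$ non-shared positions; to consume all $2k-b$ clusters of a supported $Y'$ at the same rate one must alternate between the polyads of its two edges, exactly as at the end of the proof of Theorem~\ref{edge th2} for $Y_{3,2}$. Your parenthetical tally ``one vertex per shared cluster and $k-b$ vertices per non-shared cluster'' does not give $2k-b$ vertices per copy for general $b$; either make the alternation scheme precise, or argue directly for transversal $Y_{k,b}$-copies across all $2k-b$ clusters via a counting lemma for the two-polyad configuration.
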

	
	The following result is a consequence of Theorem~\ref{tran}, Lemma~\ref{ftot}, Theorem~\ref{ineg} and Theorem~\ref{edge th2}.
	\begin{thm} \label{yth}
		Let $ 0<1/n\ll\varepsilon\ll \eta $.	Suppose $k\ge3$, $1\le d < 2\ell \le k-1$ such that $2k-2\ell \ge (2(2k-2\ell-d)^2+1)(k-d-1)+1$ or suppose $ k $ is odd, $ k\ge7,\ell=(k-1)/2 $  and  $d=k-3$. Let $H$ be a $ k $-graph of order $n$ with \[\delta_{d}(H)\ge \binom{n}{k-d} - \binom{n-\frac{n}{2k-2\ell}}{k-d} + o(n^{k-d})=(1-(1-\frac{1}{2(k-\ell)})^{k-d}+\eta) \binom{n}{k-d}.\] Then $H$ contains a $Y_{k,2\ell}$-tiling covering all but at most $\varepsilon n$ vertices.
		In particular, $ t(k,d,\ell)\le 1-(1-\frac{1}{2(k-\ell)})^{k-d}$.
	\end{thm}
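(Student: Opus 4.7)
The plan is to chain four ingredients already established in the paper: Theorem~\ref{tran} reduces the minimum $d$-degree problem for $Y_{k, 2\ell}$-tilings to an edge-count problem in the link of a $d$-set, Theorems~\ref{ineg} and~\ref{edge th2} supply the required edge-count thresholds for $Y_{k-d, 2\ell-d}$-tilings in the $(k-d)$-graph on $n-d$ vertices, and Lemma~\ref{ftot} converts an almost-perfect fractional $Y_{k, 2\ell}$-tiling into an almost-perfect integer one in $H$.

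Set $b := 2\ell$ and $s := n/(2k - 2\ell)$. The key step is to establish
\[
f_d^{s - \varepsilon' n}(Y_{k, 2\ell}, n) \le \binom{n}{k-d} - \binom{n - s}{k-d} + o(n^{k-d}).
\]
Since $f_d^{s'}(F, n)$ is monotone non-decreasing in $s'$, it suffices to bound $f_d^{s}(Y_{k, 2\ell}, n)$, and Theorem~\ref{tran} immediately reduces this to bounding $f_0^{s}(Y_{k-d, 2\ell-d}, n - d)$. Because any integer tiling is also a fractional tiling of the same size, the required bound follows once we produce an integer $Y_{k-d, 2\ell-d}$-tiling of size $s$ in any $(k-d)$-graph on $n - d$ vertices whose edge count meets the stated threshold.

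In the first case, apply Theorem~\ref{ineg} with $k' := k - d$, $b' := 2\ell - d$, and tiling size $s$. Since $2k' - b' = 2k - 2\ell - d$, the arithmetic hypothesis $n - d \ge (2(2k' - b')^2 + 1)(k' - 1)\,s + s$ of Theorem~\ref{ineg} translates, for large $n$, exactly into the assumed $2k - 2\ell \ge (2(2k - 2\ell - d)^2 + 1)(k - d - 1) + 1$. The resulting edge-count bound is $\binom{n-d}{k-d} - \binom{n - d - s + 1}{k-d} + O(n^{k-d-1})$, which equals $\binom{n}{k-d} - \binom{n - s}{k-d} + o(n^{k-d})$. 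In the second case ($k$ odd, $k \ge 7$, $\ell = (k-1)/2$, $d = k - 3$) we have $k - d = 3$ and $2\ell - d = 2$, so we instead apply Theorem~\ref{edge th2} with $\alpha := s/(n - d) \to 1/(k+1) \le 1/8 < 1/7$; this yields a $Y_{3,2}$-tiling covering $4s$ vertices from the same edge-count threshold.

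Putting things together, the hypothesis $\delta_d(H) \ge (1 - (1 - 1/(2(k-\ell)))^{k-d} + \eta)\binom{n}{k-d}$ gives, for large $n$, $\delta_d(H) \ge f_d^{s - \varepsilon' n}(Y_{k, 2\ell}, n) + (\eta/2)\binom{n}{k-d}$, so Lemma~\ref{ftot} delivers a $Y_{k, 2\ell}$-tiling of $H$ covering all but $\varepsilon n$ vertices. The bound $t(k, d, \ell) \le 1 - (1 - 1/(2(k-\ell)))^{k-d}$ then follows immediately from the definition of $t(k, d, \ell)$. The main technical check is the careful translation of parameters under the link reduction: one must verify both that the arithmetic condition of Theorem~\ref{ineg} pulls back to the stated hypothesis in the first case, and that the hypothesis $k \ge 7$ in the second case is precisely what guarantees $\alpha \le 1/7$ so that Theorem~\ref{edge th2} can be invoked. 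Once those translations are in hand, the rest is a straightforward concatenation of inequalities.
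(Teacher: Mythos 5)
Your proposal is correct and follows exactly the same route as the paper's proof: Lemma~\ref{ftot} reduces to bounding the fractional threshold $f_d^{n/(2k-2\ell)-\varepsilon'n}$, monotonicity passes to $f_d^{n/(2k-2\ell)}$, Theorem~\ref{tran} pushes this into the link as $f_0^{n/(2k-2\ell)}(Y_{k-d,2\ell-d},n-d)$, and then Theorem~\ref{ineg} (first case) or Theorem~\ref{edge th2} (second case, where $k-d=3$, $2\ell-d=2$) supplies an integer tiling of the required size. Your extra checks — that the arithmetic hypothesis of Theorem~\ref{ineg} pulls back to $2k-2\ell\ge(2(2k-2\ell-d)^2+1)(k-d-1)+1$, and that $k\ge7$ forces $\alpha\to1/(k+1)\le1/8<1/7$ — are the right ones, and the argument matches the paper.
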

	\begin{proof}
		
	Suppose that we have constants such that  $0<1/n\ll\varepsilon'<  \varepsilon \ll \eta $. 
		Since 
		\[
		\delta_{d}(H)\ge \binom{n}{k-d} - \binom{n-\frac{n}{2k-2\ell}}{k-d} + o(n^{k-d}),\] by Lemma~\ref{ftot}, it suffices to show \[f_{d}^{\frac{n}{2k-2\ell}-\varepsilon' n}(Y_{k,2\ell},n)\le  \binom{n}{k-d}-\binom{n-\frac{n}{2k-2\ell}}{k-d}+o(n^{k-d}).  \tag{6.11}
		\]

		If $ k $ is odd, $ k\ge7,\ell=(k-1)/2 $  and  $d=k-3$, then
		by Theorem~\ref{tran} and Theorem~\ref{edge th2}, we have 
		\[
		f_{k-3}^{\frac{n}{k+1}-\varepsilon' n}(Y_{k,2\ell},n)\le f_{0}^{\frac{n}{k+1}-\varepsilon' n}(Y_{3,2},n-k+3)\le \binom{n}{3}-\binom{n-\frac{n}{k+1}}{3}+o(n^3).
		\] 
		Similarly, if $k\ge3$, $1\le d < 2\ell \le k-1$ such that $2k-2\ell \ge (2(2k-2\ell-d)^2+1)(k-d-1)+1$, then we apply  Theorem~\ref{tran} and Theorem~\ref{ineg}. 
		Thus	
		\[
		f_{d}^{\frac{n}{2k-2\ell}-\varepsilon' n}(Y_{k,2\ell},n) \le f_d^{n/(2k-2\ell)}(Y_{k, 2\ell}, n) \le \binom{n}{k-d} - \binom{n-\frac{n}{2k-2\ell}}{k-d} + o(n^{k-d}).
		\]
		Then we are done in both cases.
		Note that the ``in particular'' part of the theorem follows from the definition of $t(k,d,\ell)$.
	\end{proof}
	
	
	Now Theorem~\ref{thm1} follows immediately from  Theorem~\ref{thm}, Proposition~\ref{pr} and Theorem~\ref{yth} .
	
	\begin{figure}[ht]	
		\centerline{\small\begin{tikzpicture}
				\tikzstyle{every rectangle}=[draw=blue!50,fill=blue!20,thick]
				\node[rectangle,draw,minimum height=6mm] (thm64) at (0,8.5)   {Theorem~\ref{edge th1}};
				\node[rectangle,draw,minimum height=6mm] (thm16) at (3.5,8.5)   {Theorem~\ref{edge th2}};
				\node[rectangle,draw,minimum height=6mm] (thm610) at (3.5,9.5)   {Lemma~\ref{ftot}};
				\node[rectangle,draw,minimum height=6mm] (thm63) at (3.5,7.5)   {Theorem~\ref{ineg}};
				\node[rectangle,draw,minimum height=6mm] (thm61) at (3.5,6.5)   {Theorem~\ref{tran}};
				\node[rectangle,draw,minimum height=6mm] (thm611) at (7,8)   {Theorem~\ref{yth}};
				\node[rectangle,draw,minimum height=6mm] (thm15) at (10.5,8)   {Theorem~\ref{thm1}};
				\node[rectangle,draw,minimum height=6mm] (thm13) at (7,9)   {Theorem~\ref{thm}};
				\node[rectangle,draw,minimum height=6mm] (thm19) at (7,7)   {Proposition~\ref{pr}};
				\draw[->] (thm64) -- (thm16);
				\draw[->] (thm610) -- (thm611);
				\draw[->] (thm16) -- (thm611);
				\draw[->] (thm63) -- (thm611);
				\draw[->] (thm61) -- (thm611);
				\draw[->] (thm611) -- (thm15);
				\draw[->] (thm13) -- (thm15);
					\draw[->] (thm19) -- (thm15);
		\end{tikzpicture}}
		\caption{\label{fig:1} A diagram summarizing the flow of the proofs
			of our main results}
	\end{figure}
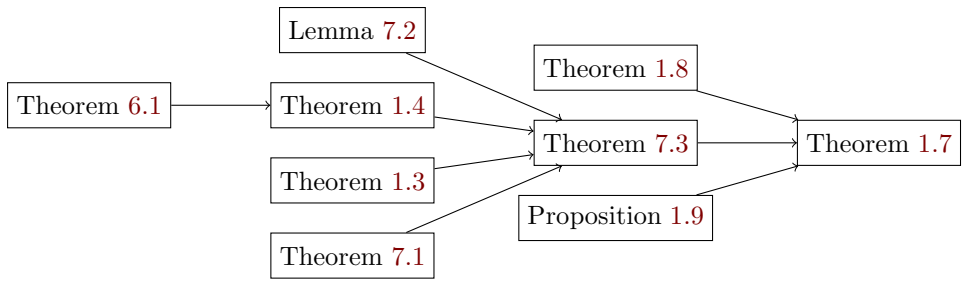
	
	

	
	\bibliographystyle{abbrv}
	\bibliography{HamiltonCycle-final1}
	
	\begin{appendix}
		\section{Proof of the absorbing path lemma}
		We first present an extension lemma, for which we need the following notation.
		Suppose that $H$ is a $(k, s)$-complex with vertex classes $V_1,\dots,V_s$ of size $m$, and also suppose that $G$ is a $(k, s)$-complex with vertex classes $X_1,\dots, X_s$ of size at most $m$. We say that $H$ respects the partition of $G$ if whenever $G$ contains an $i$-edge with vertices in $X_{j_1}, \dots , X_{j_i}$, then there is an $i$-edge of $H$ with vertices in $V_{j_1}, \dots , V_{j_i}$. On the other hand, we say that a labelled copy of $G$ in $H$ is partition-respecting if for $i \in [s]$ the vertices corresponding to those in $X_i$ lie within $V_i$.
		
		
		Roughly speaking, the extension lemma says that if $G'$ is an induced subcomplex of $G$, and $H$ is suitably regular, then almost all copies of $G'$ in $H$ can be extended to a large number of copies of $G$ in $H$. We write $|G'|_{H}$ for the number of labelled partition-respecting copies of $G'$ in $H$.

		\begin{lemma}[Extension lemma, \cite{Cooley2009Embeddings}, Lemma 5]\label{exten lem}
			Let $k,s,r,b',b'',m_{0}$ be positive integers, where	$b'<b''$, and let $c,\beta,d_{2} ,\dots, d_{k},\delta,\delta_{k}$ be positive constants such that $1/d_{i} \in N$ for all $i < k$ and
			\[1/m_{0} \ll 1/r, \delta \ll c \ll \min\{\delta_{k}, d_{2} ,\dots, d_{k-1}\} \le \delta_{k} \ll \beta, d_{k}, 1/s, 1/b''.\]
			Then the following holds for all integers $m\ge m_{0}$. Suppose that $G'$
			is a $(k,s)$-complex on $b''$ vertices with vertex classes $X_{1}, \dots, X_{s}$ and let $G$ be an induced subcomplex of 
			$G'$ on $b'$ vertices.	Suppose also that $H$ is a  $(d,\delta_{k},\delta,r)$-regular  $(k,s)$-complex with vertex classes $V_{1},\dots,V_{s}$, all of size $m$, which respects the partition of $G'$. Then all but at most $\beta |G|_{H}$ labelled partition-respecting copies of $G$ in $H$ are extendible to at least $cn^{b''-b'}$ labelled partition-respecting copies	of $G'$ in $H$.	
		\end{lemma}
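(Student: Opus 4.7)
The plan is to derive the extension lemma from the dense counting lemma for regular $(k,s)$-complexes, combined with a second-moment argument. The dense counting lemma, a standard consequence of the regularity conditions on $H$ (proved by iterating the regular restriction/slicing machinery), asserts that for any $(k,s)$-complex $F$ on at most $2b''$ vertices whose partition is respected by that of $H$, the number of labelled partition-respecting copies of $F$ in $H$ satisfies
\[
|F|_H = (1 \pm \beta/10)\, m^{|V(F)|} \prod_{i=2}^{k} d_i^{e_i(F)},
\]
where $e_i(F)$ is the number of $i$-edges of $F$. The hierarchy $1/m_0 \ll 1/r,\delta \ll c \ll \min\{\delta_k,d_i\} \le \delta_k \ll \beta,d_k,1/s,1/b''$ is calibrated exactly so that this approximation holds for all complexes of size at most $2b''$.

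First, I would apply the counting lemma simultaneously to three complexes: $G$, $G'$, and the \emph{doubled} complex $\widetilde G$ obtained by gluing two disjoint copies of $G'$ along their common induced subcomplex $G$. So $\widetilde G$ has $2b''-b'$ vertices and $2e_i(G')-e_i(G)$ edges of each size $i$, and it inherits a natural partition from $G'$ with respect to which $H$ is partition-respecting (since $G \subseteq G'$ already are). For each labelled partition-respecting copy $\phi$ of $G$ in $H$, let $N_\phi$ denote the number of extensions of $\phi$ to a partition-respecting copy of $G'$. Since $G$ is an induced subcomplex of $G'$, each copy of $G'$ restricts to a unique $\phi$, and each ordered pair of extensions of $\phi$ corresponds to a copy of $\widetilde G$; hence
\[
\sum_\phi N_\phi = |G'|_H, \qquad \sum_\phi N_\phi^2 = |\widetilde G|_H.
\]

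Writing $N^* := m^{b''-b'}\prod_{i=2}^{k} d_i^{e_i(G')-e_i(G)}$ for the expected number of extensions, the counting lemma gives $\sum_\phi N_\phi = (1\pm\beta/10)\, N^* |G|_H$ and $\sum_\phi N_\phi^2 = (1\pm\beta/10)\, (N^*)^2 |G|_H$. By Cauchy--Schwarz, $\sum_\phi N_\phi^2 \ge (\sum_\phi N_\phi)^2/|G|_H$, so these upper and lower estimates are essentially tight, forcing the $N_\phi$ to be concentrated around $N^*$. A direct Chebyshev computation bounds the variance of $N_\phi$ over the $|G|_H$ copies by $O(\beta) (N^*)^2$, hence fewer than $\beta|G|_H$ copies satisfy $N_\phi < N^*/2$. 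Since $c \ll \prod_{i\ge 2} d_i^{e_i(G')-e_i(G)}$ (again forced by the hierarchy), we have $cm^{b''-b'} < N^*/2$, and the lemma follows.

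The hard part will be invoking a single version of the dense counting lemma with multiplicative error $\beta/10$ uniformly for all three complexes $G$, $G'$, $\widetilde G$, each of which has at most $2b''$ vertices and is respected by the partition of $H$; this is exactly why the stated hierarchy demands $\delta_k \ll 1/b''$. A naive double count without the second moment fails, because the expected extension count $N^*$ can be exponentially smaller (in the number of new edges) than the trivial upper bound $m^{b''-b'}$, so it is essential that the counting lemma is applied to $\widetilde G$ as well. Minor technicalities --- verifying that partition-respecting is preserved under the gluing, and handling degenerate cases where some $d_i$ is tiny, which are excluded by the hierarchy --- are routine.
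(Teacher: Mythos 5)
This lemma is stated in the paper purely as a quoted tool: it is cited directly from \cite{Cooley2009Embeddings} (Lemma~5 there) and the paper contains no proof of it for me to compare against. Your argument is nevertheless essentially the argument given in the cited source: one first establishes a dense counting lemma for $(\mathbf{d},\delta_k,\delta,r)$-regular $(k,s)$-complexes, then applies it to $G$, to $G'$, and to the ``doubled'' complex $\widetilde G$ obtained by gluing two copies of $G'$ along $G$, and finally uses Cauchy--Schwarz/Chebyshev to show that the extension count $N_\phi$ is concentrated around its mean $N^*=m^{b''-b'}\prod_i d_i^{e_i(G')-e_i(G)}$. The hierarchy is exactly what lets the counting lemma apply with small multiplicative error to all three complexes (each on at most $2b''$ vertices) and lets you absorb the factor $\prod_i d_i^{e_i(G')-e_i(G)}$ into $c$. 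So this is the right proof and the right route.

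Two small technical points worth tightening. First, $\sum_\phi N_\phi^2$ is not literally $|\widetilde G|_H$: ordered pairs of extensions of the same $\phi$ that share a vertex outside $\phi$ are counted by the left-hand side but do not yield embeddings of $\widetilde G$, which requires the two copies of $G'$ to be vertex-disjoint off $G$. You should note that these degenerate pairs contribute only $O\!\left(|G'|_H\cdot (b''-b')^2 m^{\,b''-b'-1}\right)$, which is a factor $\Theta(1/(m\prod_i d_i^{\cdots}))$ below $(N^*)^2|G|_H$ and hence negligible by the hierarchy $1/m_0\ll\dots$. Second, your constant $\beta/10$ in the counting-lemma error is not quite enough: running Chebyshev against the threshold $N^*/2$ costs roughly a factor $4$ plus some slack from approximating the mean, so you need the counting error to be a smaller constant multiple of $\beta$; this is of course harmless since the hierarchy lets you choose it. Neither issue affects the soundness of the approach.

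As a side remark, the ``$cn^{b''-b'}$'' in the quoted statement should read ``$cm^{b''-b'}$'' (the cluster size), as your proof correctly uses.
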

		
		Let $H$ be a $k$-graph, and let $S$ be a $( k-\ell) $-set of $V(H)$. 
		The following absorbing path was constructed and used in \cite{Daniela2010Hamilton}, which we will use as well in our proof.
		
		\begin{prop}[\cite{Daniela2010Hamilton}, Proposition 6.1]\label{Prop1} 
			Suppose that $k\ge3$, and that $1\leq \ell \leq k-1$ is such that $(k-\ell)\nmid k$. Then there is a $k$-partite $k$-graph $\mathcal{AP}(k,\ell)$ with the following properties.
			\begin{enumerate}
				\item $|\mathcal{AP}(k,\ell)|\le k^{4}$.
				\item The vertex set of $\mathcal{AP}(k,\ell)$ consists of two disjoint sets S and X with $|S| = k-\ell $.
				\item $\mathcal{AP}(k,\ell)$ contains an $ \ell $-path $P$ with vertex set 
				$X$ and ordered ends $ P^{beg} $ and $ P^{end} $.
				\item \label{prop1_4}$\mathcal{AP}(k,\ell)$ contains an $ \ell $-path $Q$ with vertex set $S\cup X$ and ordered ends $ P^{beg} $ and $ P^{end} $.
				\item \label{prop1_5}No edge of $\mathcal{AP}(k,\ell)$ contains more than one vertex of S.
				\item No vertex class of $\mathcal{AP}(k,\ell)$ contains more than one vertex of S.
			\end{enumerate}
		\end{prop}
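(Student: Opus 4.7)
The plan is to build $\mathcal{AP}(k,\ell)$ explicitly by concatenating $k-\ell$ small ``single absorber'' gadgets along a base $\ell$-path. A convenient way to view the $k$-partiteness is to color the vertices of any $\ell$-path with colors in $[k]$ so that each edge contains exactly one vertex of each color: along the path, the color pattern shifts by $k-\ell$ positions with each new edge. The hypothesis $(k-\ell)\nmid k$ is precisely what guarantees that this shifted pattern is not strictly periodic with period $k$, which is the flexibility needed to absorb extra vertices while preserving the $k$-partite structure.

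First, I would construct a single-vertex absorber: given a target vertex class $c\in[k]$, produce a short $k$-partite $\ell$-path $P_c$ on a vertex set $X_c$ together with a distinguished vertex $s$ of class $c$ disjoint from $X_c$, such that $X_c$ supports an $\ell$-path with some ordered ends $P^{beg}_c, P^{end}_c$ and $X_c\cup\{s\}$ supports an $\ell$-path with the \emph{same} ordered ends. The idea is to take a long enough segment of the canonical $\ell$-path (of length $O(k)$), and use the number-theoretic freedom coming from $(k-\ell)\nmid k$ to exhibit two internally vertex-disjoint $\ell$-paths between a fixed pair of ordered $\ell$-ends, differing by exactly one vertex of class $c$. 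Since $|X_c|\le O(k^2)$, each gadget has at most $O(k^2)$ vertices.

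Next, I would chain $k-\ell$ such gadgets, one for each vertex $s_i\in S$ of the appropriate color, by identifying the right end of $P_{c_i}$ with the left end of $P_{c_{i+1}}$. The resulting structure has $|S|=k-\ell$ absorbed vertices, a total of at most $(k-\ell)\cdot O(k^2)+O(k)\le k^4$ vertices, and it admits an $\ell$-path $P$ on $X$ (using each gadget's ``short'' path) as well as an $\ell$-path $Q$ on $S\cup X$ (using each gadget's ``long'' path), both with the same ordered ends $P^{beg},P^{end}$ coming from the very first and very last gadget, giving properties (3) and (4). Property (6) holds by construction since each $s_i$ is placed in its own designated color class, and (5) follows because consecutive $s_i$'s are separated by $\Omega(k)$ vertices along the base path, far more than the $k$ vertices that any one edge spans.

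The main obstacle is the design of the single absorber gadget: one must exhibit a concrete pair of ordered-end-sharing $\ell$-paths $P_c$ and $Q_c$ that differ by a single vertex while remaining $k$-partite. This is where $(k-\ell)\nmid k$ is used in an essential way — writing $k=q(k-\ell)+t$ with $0<t<k-\ell$, the cyclic shift of the color pattern has order $k/\gcd(k,k-\ell)>1$, which allows one to locally reroute a segment of the path so as to insert a single extra vertex of a prescribed color without breaking the $k$-partite condition. Once this gadget is in hand, the rest of the construction is a routine concatenation.
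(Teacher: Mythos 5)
The paper itself does not prove this proposition; it is quoted verbatim from K\"uhn, Mycroft and Osthus~\cite{Daniela2010Hamilton}, so there is no in-paper argument to compare against. Judged on its own terms, your proposal has a fatal flaw at exactly the step you flag as the ``main obstacle.''

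Your plan hinges on a single-vertex absorber gadget: an $\ell$-path $P_c$ on a vertex set $X_c$ and an $\ell$-path on $X_c\cup\{s\}$, both with the \emph{same} ordered $\ell$-ends. No such gadget can exist when $k-\ell\ge2$. An $\ell$-path on $m$ vertices has $(m-\ell)/(k-\ell)$ edges, so necessarily $m\equiv\ell\pmod{k-\ell}$. Your two paths would have $|X_c|$ and $|X_c|+1$ vertices respectively, forcing $1\equiv0\pmod{k-\ell}$, i.e.\ $k-\ell=1$. But $k-\ell=1$ contradicts the standing hypothesis $(k-\ell)\nmid k$, which in particular forces $k-\ell\ge2$. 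So the gadget is impossible in every case the proposition covers, and no amount of exploiting the cyclic shift of color patterns can get around this parity/residue obstruction. It is no coincidence that the proposition absorbs exactly $|S|=k-\ell$ vertices at once: that is the \emph{smallest} nonzero number of vertices one can add to an $\ell$-path while keeping the number of vertices in the admissible residue class (it adds precisely one edge). Any correct construction must absorb all $k-\ell$ vertices of $S$ in a single rerouting of the path, not one vertex at a time; chaining $k-\ell$ single-vertex absorbers cannot be made to work. Where the hypothesis $(k-\ell)\nmid k$ genuinely enters is not to permit a one-vertex detour, but to give the non-periodicity of the color pattern needed so that the two paths $P$ (on $X$) and $Q$ (on $X\cup S$, one edge longer) can share both ordered $\ell$-ends while remaining $k$-partite and placing the $k-\ell$ new vertices in distinct color classes with no edge meeting two of them. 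Your concatenation scaffolding, bound-counting for property (1), and observations toward (5)--(6) are fine in spirit, but they rest on a building block that does not exist.
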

		Note that the $k$-graph $\mathcal{AP}(k,\ell)$ may not be unique. 
		For our proof we just fix an arbitrary such $k$-graph satisfying all properties in Proposition~\ref{Prop1}.
		Let $b(k,\ell)$= $|\mathcal{AP}(k,\ell)|-k+\ell$. Then $b(k,\ell)$ is the order of the $ \ell $-path $P$.
		
		Let $c>0$. Recall that a $( k-\ell) $-set $S$ of $V(H)$ is \emph{$c$-good} (otherwise \emph{$c$-bad}) if $H$ contains at least $cn^{b(k,\ell)}$ absorbing paths for $S$, each on  $b(k,\ell)$ vertices. The following lemma shows that for the values of $k$ and $\ell$ that we are interested in, and any small $c$, if $H$ is sufficiently large and has large minimum degree, then almost all $( k-\ell) $-sets of $ V(H) $ are $c$-good. For convenience, let $b:=b(k,\ell)$.
		
		\begin{lemma} \label{lem cbad}
			Suppose that $k\ge3$, $1\leq \ell \leq k-1$ and that $k-\ell\leq d \leq k-1$ is such that $(k-\ell)\nmid k$ and $\frac{1}{n} \ll c \ll \gamma  \ll \mu,k-1$. Let $H$ be a $k$-graph on $n$ vertices such that  $\delta_d(H)\ge \mu \binom{n}{k-d}$. Then at most $\gamma n^{k-\ell}$ sets $S$ of $k-\ell$ vertices of $H$ are $c$-bad.	
		\end{lemma}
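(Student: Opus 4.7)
My plan is to prove Lemma~\ref{lem cbad} by combining the hypergraph regularity method with the extension lemma (Lemma~\ref{exten lem}), following the blueprint of~\cite{Daniela2010Hamilton}. The key structural feature to exploit is that, by properties (5) and (6) of Proposition~\ref{Prop1}, the absorbing structure $\mathcal{AP}(k,\ell)$ is $k$-partite, and the $(k-\ell)$-vertex subcomplex induced on its $S$-vertices contains no edges of size at least two. Thus $S$ plays the role of an essentially empty subcomplex; the extension lemma will, for most placements of $S$ inside a regular $(k,k)$-complex, produce many extensions to full copies of $\mathcal{AP}(k,\ell)$, each of which is an absorbing path for $S$.

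To execute this, I would first apply Theorem~\ref{reglm} to $H$ with constants $1/n \ll 1/r, \delta, c \ll \min\{\delta_k, 1/t\} \le \delta_k, \eta \ll d' \ll \gamma \ll \mu, 1/k$, obtaining a family of partitions $\mathcal{P}$ with clusters $V_1,\dots,V_{a_1}$ of common size $m$, together with the reduced $k$-graph $R = R(H,\mathcal{P},d')$. By Lemma~\ref{deg lem}, all but a $\gamma$-fraction of $d$-sets in $R$ have degree at least $(\mu/2)\binom{|R|}{k-d}$. Call a $(k-\ell)$-set $S\subseteq V(H)$ \emph{well-placed} if its vertices lie in $k-\ell$ distinct clusters $V_{j_1},\dots,V_{j_{k-\ell}}$ and the $(k-\ell)$-tuple $(V_{j_1},\dots,V_{j_{k-\ell}})$ extends to many $d$-sets of high degree in $R$ (equivalently, to many $k$-edges of $R$). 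Since $d\ge k-\ell$, the minimum $d$-degree condition on $R$ together with a direct averaging argument shows that all but at most $\gamma n^{k-\ell}$ of the $(k-\ell)$-sets $S$ are well-placed.

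Given a well-placed $S$, for each valid $k$-tuple of clusters $(V_{i_1},\dots,V_{i_k})$ respecting the $k$-partition of $\mathcal{AP}(k,\ell)$ (with $S$ embedded in the $S$-vertex slots under some injection $\pi$), I would invoke Lemma~\ref{exten lem} on the regular $(k,k)$-complex $H^*$ defined as in~\eqref{eqcomplex} for this $k$-tuple, taking $G' = \mathcal{AP}(k,\ell)^{\leq}$ and $G$ its induced (edgeless-above-size-one) subcomplex on the $(k-\ell)$ $S$-vertices. This yields at least $c' m^b$ partition-respecting labelled copies of $\mathcal{AP}(k,\ell)$ in $H^*$ extending $S$, each providing an absorbing path of length $b=b(k,\ell)$ for $S$. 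Summing over the polynomially many valid $k$-tuples and the $O(1)$ injections $\pi$ delivers at least $cn^b$ absorbing paths for $S$, so $S$ is $c$-good. The main obstacle lies in the averaging step: one must carefully show that the exceptional $(k-\ell)$-tuples of clusters---those either contained in non-regular polyads in the sense of Theorem~\ref{reglm}, or failing to extend to many high-degree $d$-sets in $R$---account for at most $\gamma n^{k-\ell}$ $(k-\ell)$-sets $S$. This is handled by a layered counting argument that exploits $d\ge k-\ell$ to reduce the count of bad $(k-\ell)$-tuples of clusters to a $d$-degree failure in $R$, together with the $\delta_k$-bound on the non-regular polyads from Theorem~\ref{reglm}.
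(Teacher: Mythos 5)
Your proposal follows essentially the same route as the paper's own proof: apply the regularity lemma, show via Lemma~\ref{deg lem} and the hypothesis $d\ge k-\ell$ that almost every $(k-\ell)$-set sits in a crossing tuple of clusters spanning an edge of the reduced graph $R$, and then invoke the Extension Lemma (Lemma~\ref{exten lem}) on the associated regular $(k,k)$-complex with $G'=\mathcal{AP}(k,\ell)^{\le}$ and $G$ the edgeless-above-size-one subcomplex on the $S$-vertices, using properties (5) and (6) of Proposition~\ref{Prop1}. The only cosmetic difference is that you sum the extension counts over multiple $k$-tuples and injections, whereas the paper observes that a single edge of $R$ already yields $\ge c n^{b(k,\ell)}$ absorbing paths (since each cluster has size $m=n/a_1\ge n/t$); your extra summation is harmless but unnecessary.
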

		\begin{proof} We choose more constants that satisfy the following hierarchy
			\[\frac{1}{n} \ll \frac{1}{r},\delta \ll c \ll \min \{\delta_{k},\frac{1}{t}\} \ll \delta_{k},\eta \ll d' \ll \gamma\ll \mu'\ll \mu,k-1\]
			and assume that $t!\mid n$. Let $H$ be a $k$-graph on $n$ vertices such that  $\delta_d(H)\ge \mu \binom{n}{k-d}$. Apply Lemma~\ref{reglm} to $H$, and let $V_{1},\dots,V_{a_{1}}$ be the clusters of the partition obtained where $ a_1\le t $. Let $m = \frac{n}{a_{1}}$ be the size of each cluster. Define $R$ as the reduced $ k$-graph on these clusters.
			
			The following claim shows that for almost all $(k-\ell)$-sets $ T $  of $V(H)$, $ T $ is contained in clusters lying  in some  edge of $R$.
			\begin{claim} \label{claim2}
				For all but at most $\gamma n^{k-\ell}/2$  $(k-\ell)$-sets $T=\{v_{1},v_{2},\dots,v_{k-\ell} \} \in [V(H)]^{k-\ell}$, there are $V_{i_{1}},V_{i_{2}},\dots,V_{i_{k}}$ which form an edge in $R$, where $v_{j}\in V_{i_{j}}, 1\le j \le k-\ell$.
			\end{claim}
			\begin{proof}
				First, we show that $\deg_{R}(S)\ge 1 $ holds for all but at most $\gamma a_{1}^{k-\ell}/3$  $(k-\ell)$-sets 
				$S\in (V(R))^{k-\ell}$. Indeed, by Lemma~\ref{deg lem}, all but at most $\theta a_{1}^{d}$ sets $X\in \binom{V(R)}{d}$ satisfy $\deg_{R}(X)\ge \mu \binom{a_{1}}{k-d} $. Suppose that there are at least $O(a_{1}^{k-\ell})$ $(k-\ell)$-sets $S$ of $V(R)$ which are not in any edge of $R$, then the number of $d$-sets $X$ satisfying $\deg_{R}(X)< \mu  \binom{a_{1}}{k-d}$ is more than $O(a_{1}^{d}) $, which is a contradiction.
				
				By the definition of equitability, at most $\eta n^{k-\ell}\ll \gamma n^{k-\ell}/6$ sets $T$ of $k-\ell$ vertices of $H$ do not lie in $\text{Cross}_{k-\ell}$. For those $T$ lying in $\text{Cross}_{k-\ell}$,  there are at most $\gamma {n}^{k-\ell}/3$ such $T$ whose corresponding clusters do not lie in an edge of $ R $.
			\end{proof}
			
			Now we continue the proof.
			Suppose that $V_{i_{1}},V_{i_{2}},\dots,V_{i_{k}}$ are clusters forming an edge of $R$. Let $I$ be the index set $\{i_{1},\dots,i_{k}\}$. By the definition of $ R $, there exists $J\in \text{Cross}_{I}$, such that $H$ is $(\delta_{k},r)$-regular w.r.t. $\hat{P}^{k-1}(J)$. Let $H^{*}$ be the $(k,k)$-complex 
			as defined in \eqref{eqcomplex}. Thus $H^{*}$ is  $(\mathbf{d},\delta_{k},\delta,r)$-regular and satisfies the conditions of Lemma~\ref{exten lem}. Moreover, recall that $\mathcal{AP}(k,\ell)^{\le}$ is defined as a $(k,k)$-complex. Recall that $V(\mathcal{AP}(k,\ell))=X\cup S$, then the vertex set $S$ can be viewed as a labelled $(k,k)$-complex without edge of size at least two,  an induced subcomplex of  $\mathcal{AP}(k,\ell)^{\le}$ by \eqref{prop1_4} and \eqref{prop1_5} of Proposition~\ref{Prop1}. Now we can apply Lemma~\ref{exten lem} and conclude that all but at most  $\gamma m^{k-\ell}/2$ ordered sets $S'=\{v_{1},v_{2},\dots,v_{k-\ell} \}$ with  $v_{j}\in V(i_{j})$ can be  extended to at least $cb!n^{b}$ labelled partition-respecting copies of $\mathcal{AP}(k,\ell)$ in $H^{*}$.
			Considering each copy $C$ of $\mathcal{AP}(k,\ell)$, by the structure of  $\mathcal{AP}(k,\ell)$, $C-S'$ is an absorbing path for $S'$. Thus,  $H^{*}$ contains at least $cn^{b}$ absorbing paths on $b$ vertices for $S'$. Thus at most $\gamma m^{k-\ell}/2$ such sets $S'$ are $c$-bad. 
			
			Summing over all sets of $k-\ell$ clusters in $\mathcal P$, there are at most  $\gamma n^{k-\ell}/2$ $c$-bad $(k-\ell)$-sets with no two vertices in the same cluster and the corresponding $(k-\ell)$ clusters are in some edge of $R$.	Combining with Claim~\ref{claim2}, we get that the number of $c$-bad $(k-\ell)$-sets of  $V(H)$ is at most  $\gamma n^{k-\ell}$. 
		\end{proof}

Given Lemma~\ref{lem cbad}, the proof of Lemma~\ref{abs lem} follows verbatim as the proof of \cite[Lemma 6.3]{Daniela2010Hamilton}, after replacing \cite[Corollary 5.4]{Daniela2010Hamilton} with Lemma~\ref{con lem} and \cite[Lemma 6.2]{Daniela2010Hamilton} with  Lemma~\ref{lem cbad}.


	\section{Proof of \eqref{6.10}}
Rewriting all terms above with $M_1$ and $M_2$ in \eqref{eq1_9} with  $M_1=4m_1$, $M_2=3m_2$ and $|U|=3n/7=(3/4)(M_1+M_2)$, we get
	\begin{align*}	
	e(H)&\le \frac{M_1}{4}\binom{\frac{3}{4}(M_1+M_2)}{2}+\frac{21}{4}\binom{\frac{1}{4}M_1}{2}(M_1+M_2)+\frac{9}{4}\binom{\frac{1}{3}M_2}{2}(M_1+M_2)+\frac{3}{8}M_1M_2(M_1+M_2)\\&+37\binom{\frac{1}{4}M_1}{3}+19\binom{\frac{1}{3}M_2}{3}+10\binom{\frac{1}{4}M_1}{2}M_2+6\binom{\frac{1}{3}M_2}{2}M_1+O(n^2)\\&\le\frac{9}{64}M_1\binom{M_1+M_2}{2}+\left[\frac{21}{64}\binom{M_1}{2}(M_1+M_2)+\frac{1}{4}\binom{M_2}{2}(M_1+M_2)+\frac{3}{8}M_1M_2(M_1+M_2)\right]\\&+\left[\frac{37}{64} \binom{M_1}{3}+\frac{19}{27}\binom{M_2}{3}+\frac{5}{8}\binom{M_1}{2}M_2+\frac{2}{3}\binom{M_2}{2}M_1\right]+O(n^2)\\&\le\frac{27}{64}\binom{M_1+M_2}{3}+\frac{21}{64\times 2}(M_1+M_2)^3+\frac{37}{64}\binom{M_1+M_2}{3}+O(n^2)\\&\le \frac{127}{64\times 6}(M_1+M_2)^3 + O(n^2)= \frac{127}{7^3\times 6}n^3+O(n^2) \\&= \binom{n}{3}-\binom{n- n/7}{3}+O(n^2), 
\end{align*}
where we used $M_1+M_2=4n/7$, $(M_1+M_2)^2=M_{1}^{2}+M_{2}^{2}+2M_{1}M_{2}$, and $ \binom{M_1+M_2}{3}=\binom{M_1}{3}+\binom{M_2}{3}+\binom{M_1}{2}M_2+\binom{M_2}{2}M_1 $.
\end{appendix}

\end{document}